\def\CC{\mathbb{C}} %C
\def\DD{\mathbb{D}} %D
\def\BB{\mathbb{B}} %B
\def\RR{\mathbb{R}} %R
\def\TT{\mathbb{T}} %T (brzeg D)
\def\SS{\mathbb{S}} %S (pas pionowy w C, lub sfera w R^n, itp.)
\def\CDD{\overline{\DD}} %domkniecie D
\def\OO{\mathcal{O}} %holom.
\def\CL{\mathcal{C}} %klasa C^k, C^r, itp.
\def\RE{\textnormal{Re}\,} %czesc rzeczywista
\def\IM{\textnormal{Im}\,} %czesc urojona
\def\INT{\textnormal{int}\,} %wnetrze
\def\SPAN{\textnormal{span}\,}
\def\AUT{\textnormal{Aut}\,}
\def\HH{\mathcal{H}} %rodzina H
\def\HHP{\HH_+} %rodzina H_+
\def\MMM{\mathcal{M}} %rodzina M
\def\LEBT{\mathcal{L}^{\TT}} %znaczek L^T, dla miary Lebesgue'a na okregu jednostkowym
\def\DLEBT{d\mathcal{L}^{\TT}} %znaczek dL^T, dla miary Lebesgue'a na okregu jednostkowym
\def\BLHL#1{\bar\lambda h_{#1}(\lambda)} %\bar\lambda h_j(\lambda)
\newtheoremstyle{remarkstyle}{}{}{}{}{\bf}{.}{ }{}
\newtheorem{THE}{Theorem}[section]
\newtheorem{PROP}[THE]{Proposition}
\newtheorem{LEM}[THE]{Lemma}
\newtheorem{CRL}[THE]{Corollary}
\newtheorem{OBS}[THE]{Observation}
\theoremstyle{remarkstyle}
\newtheorem{RM}[THE]{Remark}
\newtheorem{EX}[THE]{Example}
\begin{document}

\renewcommand{\thepage}{\small\arabic{page}}
\renewcommand{\thefootnote}{(\arabic{footnote})}

\renewcommand\Affilfont{\small}

% **** **** **** **** **** **** **** **** **** **** **** **** **** **** **** ****

%\frontmatter

\author{Sylwester Zaj\k{a}c}
\affil{Institute of Mathematics, Faculty of Mathematics and Computer Science,\\ Jagiellonian University, \L ojasiewicza 6, 30-348 Krak\'ow, Poland\\ sylwester.zajac@im.uj.edu.pl}

% \address{Institute of Mathematics, Faculty of Mathematics and Computer Science,\\ Jagiellonian University, \L ojasiewicza 6, 30-348 Krak\'ow, Poland\\ sylwester.zajac@im.uj.edu.pl}

\title{Complex geodesics in convex tube domains II}

\date{}

\maketitle

\begin{abstract}
We give a description (direct formulas) of all complex geodesics in a convex tube domain in $\CC^n$ containing no complex affine lines, expressed in terms of geometric properties of the domain.
We next apply that result to give formulas (a necessary condition) for extremal mappings with respect to the Lempert function and the Kobayashi-Royden metric in a big class of bounded, pseudoconvex, complete Reinhardt domains: for all of them in $\CC^2$ and for those of them in $\CC^n$ which logarithmic image is strictly convex in geometric sense.
\end{abstract}

\footnotetext[1]{{\em 2010 Mathematics Subject Classification:}
Primary: 32F45, 32A07.

\noindent
{\em Key words and phrases:} Complex geodesics; tube domains; convex domains.

\noindent
The author was supported by the NCN grant on the basis of the decision number DEC-2012/05/N/ST1/02911.
}

% **** **** **** **** **** **** **** **** **** **** **** **** **** **** **** ****

%\mainmatter

% -------- -------- -------- -------- -------- --------

\section{Introduction}\label{sect_introduction}

A non-empty open set $D\subset\CC^n$ is called a \emph{tube domain} if $D=\Omega+i\RR^n$ for some domain $\Omega\subset\RR^n$.
We call $\Omega$ the \emph{base} of $D$ and in this paper we denote it by $\RE D$.
In the recent paper \cite{zajac} we investigated convex tube domains from the point of view of theory of holomorphically invariant distances.
More precisely, we were interested especially in the notion of complex geodesics.
Given a convex domain $D\subset\CC^n$, we call a holomorphic map $\varphi:\DD\to D$ a \emph{complex geodesic} for $D$ if there exists a \emph{left inverse} of $\varphi$, i.e. a holomorphic function $f:D\to\DD$ such that $f\circ\varphi=\text{id}_{\DD}$.
Complex geodesics of $D$ are exactly holomorphic isometires between the unit disc $\DD\subset\CC$ equipped with the Poincar\'{e} distance and the domain $D$ equipped with the Carath\'{e}odory pseudodistance.
It follows from the Lempert theorem (see \cite{lempert} or \cite[Chapter 8]{jarnickipflug}) that if $D\subset\CC^n$ is a taut convex domain, then for any pair of points in $D$ there exists a complex geodesic passing through them.

In this paper we restrict our considerations to convex tube domains containing no complex affine lines (equivalently, to convex tubes with no real affine lines contained in base).
Such a family of domains is equal to the family of taut convex tube domains (see e.g. \cite{braccisarcco}).
This approach has many advantages, among which it is worth mentioning that every holomorphic map with image lying in such a domain admits a boundary measure (\cite[Observation 2.5]{zajac}).
What is more, from \cite[Observation 2.4]{zajac} it follows that doing such a restriction we lose no generality.

This paper may be treated as a continuation of \cite{zajac}.
In \cite{zajac} we gave an equivalent condition for a holomorphic map $\varphi:\DD\to D$ to be a complex geodesic in a convex tube domain $D$ containing no complex affine lines.
It it stated in language of measure theory and formulated in terms of \emph{boundary $n$-tuple of measures} of $\varphi$ (in \cite{zajac} and here shortly called a \emph{boundary measure}).
Boundary measure of $\varphi$ is a unique $n$-tuple $\mu=(\mu_1,\ldots,\mu_n)$ of real Borel measures on the unit circle $\TT\subset\CC$ such that $$\varphi(\lambda)=\frac{1}{2\pi}\int_{\TT}\frac{\zeta+\lambda}{\zeta-\lambda}d\mu(\zeta)+i\IM\varphi(0),\;\lambda\in\DD.$$
In the main result of this paper, Theorem \ref{th_postac_geod}, we present a full description of all complex geodesics for $D$.
We derive it using the equivalent condition from \cite{zajac} and the following, 'spherical' decomposition of $n$-tuples of measures (Lemma \ref{lem_rozklad_miary_varrho_d_nu}): given real Borel measures $\mu_1,\ldots,\mu_n$ on $\TT$, there exist a finite positive Borel measure $\nu$ on $\TT$ singular to the Lebesgue measure $\LEBT$ on $\TT$, a Borel-measurable map $\varrho$ from $\TT$ to the unit sphere $\partial\BB_n$ and a map $g:\TT\to\RR^n$ with components in $L^1(\TT,\LEBT)$ such that $$(\mu_1,\ldots,\mu_n)=g\,d\LEBT+\varrho\,d\nu.$$
The objects $\nu$, $\varrho$ and $g$ are in some sense unique.
Theorem \ref{th_postac_geod} states that a holomorphic map with a boundary measure $\mu=(\mu_1,\ldots,\mu_n)$ is a complex geodesic for $D$ if and only if the parts $\nu$, $\varrho$ and $g$ of the decomposition of $\mu$ satisfy several geometric conditions.
So, strictly speaking, in Theorem \ref{th_postac_geod} we describe the form of every $n$-tuple of measures which define a complex geodesic for $D$.
But in fact, the complex geodesic itself can be then easily recovered (up to an imaginary constant) from its boundary measure via the above integral.

Later in the paper we apply Theorem \ref{th_postac_geod} to obtain more detailed descriptions of complex geodesics in some special classes of convex tube domains.
In Subsection \ref{subsect_domains_Dn} we do it for, among others, convex tubes $D\subset\CC^n$ with the base being bounded from above on each coordinate and satisfying the equality $\RE D+(-\infty,0]^n=\RE D$.
These domains are very useful in studying extremal mappings with respect to the Lempert function and the Kobayashi-Royden metric in bounded, pseudoconvex, complete Reinhardt domains in $\CC^n$.
We deal with this topic in Section \ref{sect_reinhardt_domains}, achieving formulas for extremal mappings in a big class of such Reinhardt domains:  for all of them in $\CC^2$ and for those of them in $\CC^n$ which logarithmic image is strictly convex in geometric sense (i.e. it is convex and its boundary contains no non-trivial segments).
Besides, in Subsection \ref{subsect_domains_in_c2} we investigate complex geodesics in convex tube domains in $\CC^2$.
The results obtained there, together with the considerations made in Subsection \ref{subsect_domains_Dn}, simplify the conditions from Theorem \ref{th_postac_geod} in two-dimensional case.

Let us briefly summarize the content of the paper.
In Section \ref{sect_preliminaries} we present the notation which is used in this paper and we recall some facts about boundary measures of holomorphic maps.
There we also prove the lemma on the decomposition of $n$-tuples of measures, which was mentioned above.
At the end of that section we define a few objects describing some geometric properties of a convex tube domain in $\CC^n$.
In Section \ref{sect_special_classes} we formulate the main result of this paper, Theorem \ref{th_postac_geod}, we present its applications in special classes of tube domains and we give some examples.
Section \ref{sect_dowod_tw_postac_geod} contains the proof of Theorem \ref{th_postac_geod} and some additional remarks.
In Section \ref{sect_reinhardt_domains} we apply results from Section \ref{sect_special_classes} to obtain formulas for extremal holomorphic mappings in some classes of Reinhardt domains in $\CC^n$.

\section{Preliminaries}\label{sect_preliminaries}

Let us begin with some notation.
The symbols $\DD$, $\TT$, $\CC_*$ denote respectively the unit disc in $\CC$, the unit circle in $\CC$ and the punctured plane, namely the set $\CC\setminus\lbrace 0\rbrace$.
By $\delta_{\lambda_0}$ we mean the Dirac delta at a point $\lambda_0\in\TT$, by $\chi_{A}$ we mean the characteristic function $\chi_A:\TT\to\lbrace 0,1\rbrace$ of a set $A\subset\TT$ and by $e_1,\ldots,e_n$ we mean the canonical basis of $\RR^n$ or $\CC^n$.
The Poincar\'{e} distance in $\DD$ is denoted by $\rho$.
By $\langle x,y\rangle$ we mean the standard inner product of vectors $x,y\in\RR^n$, by $\|\cdot\|$ we denote the euclidean norm in $\RR^n$ and by $\BB_n$ we mean the unit euclidean ball in $\RR^n$.
For a set $A\subset\RR^n$ the symbol $A^{\perp}$ denotes the set $\lbrace v\in\RR^n:\forall a\in A:\langle v,a\rangle=0\rbrace$.

We use the symbol $\langle\cdot,\cdot\cdot\rangle$ also for measures and functions.
For example, if $\mu$ is a tuple $(\mu_1,\ldots,\mu_n)$ of real (i.e. complex with real values) Borel measures on $\TT$ and $v=(v_1,\ldots,v_n)$ is a real vector or a bounded Borel-measurable mapping from $\TT$ to $\RR^n$, then $\langle d\mu,v\rangle$ or $\langle v,d\mu\rangle$ is the measure $\sum_{j=1}^n v_j d\mu_j$, etc.
% In this paper we deal only with Borel measures on $\TT$.
The fact that a real measure $\nu$ is positive (resp. negative, null) is shortly denoted by $\nu\geq 0$ (resp. $\nu\leq 0$, $\nu=0$).
The variation of a complex measure $\nu$ is denoted by $|\nu|$.
In this paper we consider mostly Borel measures on $\TT$ and we sometimes omit the word 'Borel'.

In what follows we use the following families of mappings:
\begin{equation*}
\begin{array}{lcl}
\HH^n   & := & \lbrace h\in\OO(\CC,\CC^n): \forall\lambda\in\TT:\,\BLHL{}\in\RR^n\rbrace,\\
\HHP^n & := & \lbrace h\in\OO(\CC,\CC^n): \forall\lambda\in\TT:\,\BLHL{}\in [0,\infty)^n\rbrace.
\end{array}
\end{equation*}
We have
$$\HH^n = \lbrace h\in\OO(\CC,\CC^n): \exists a\in\CC^n, b\in\RR^n: h(\lambda)=\bar a\lambda^2+b\lambda+a, \lambda\in\CC\rbrace.$$
Moreover (see e.g. \cite[Lemma 8.4.6]{jarnickipflug}),
$$\HHP^1 = \lbrace h\in\OO(\CC): \exists c\geq 0, d\in\CDD: h(\lambda)=c(\lambda-d)(1-\bar d\lambda), \lambda\in\CC\rbrace.$$
In particular, for $h\in\HHP^1$ we have $\BLHL{}=c|\lambda-d|^2$, $\lambda\in\TT$, so such a function $h$ has at most one zero on $\TT$ (counting without multiplicities).

In this paper we sometimes consider linear dependence or independence of functions $h_1,\ldots,h_m\in\HH^1$.
Note that here it does not matter whether it is meant over the filed $\RR$ or $\CC$, because these two properties are equivalent, in view of the fact that $\BLHL{j}\in\RR$ for $\lambda\in\TT$, $j=1,\ldots,m$.

\smallskip
Now we recall some facts on boundary measures of holomorphic maps.
A real Borel measure $\mu$ on $\TT$ is called \emph{boundary measure} of a holomorphic function $\varphi:\DD\to\CC$, if
\begin{equation}\label{eq_poisson_formula_f_mu}
\varphi(\lambda)=\frac{1}{2\pi}\int_{\TT}\frac{\zeta+\lambda}{\zeta-\lambda}d\mu(\zeta)+i\IM\varphi(0),\;\lambda\in\DD,
\end{equation}
or equivalently, taking the real parts in this equality, if
\begin{equation}\label{eq_poisson_formula_re_f_mu}
\RE f(\lambda)=\frac{1}{2\pi}\int_{\TT}\frac{1-|\lambda|^2}{|\zeta-\lambda|^2}d\mu(\zeta),\;\lambda\in\DD.
\end{equation}
Such a measure $\mu$ is uniquely determined by $\varphi$.
In the case when $\varphi$ is a map, namely $\varphi=(\varphi_1,\ldots,\varphi_n)\in\OO(\DD,\CC^n)$, by \emph{boundary measure} of $\varphi$ we mean a unique $n$-tuple $(\mu_1,\ldots,\mu_n)$ of real Borel measures on $\TT$ such that $\mu_j$ is the boundary measure for $\varphi_j$ for every $j=1,\ldots,n$.
Then formulas analogous to \eqref{eq_poisson_formula_f_mu} and \eqref{eq_poisson_formula_re_f_mu} hold for $\varphi$.

Denote $$\MMM^n:=\lbrace\varphi\in\OO(\DD,\CC^n):\varphi\text{ admits a boundary measure}\rbrace.$$
Not every holomorphic function on $\DD$ admits a boundary measure and hence $\mathcal{M}^n\subsetneq\OO(\DD,\CC^n)$.
It is very important that if $D\subset\CC^n$ is a convex tube domain containing no complex affine lines, then every holomorphic map $\varphi:\DD\to D$ belongs to $\MMM^n$ (see \cite[Observation 2.5]{zajac}).
In that case for $\LEBT$-almost every $\lambda\in\TT$ the radial limit $\varphi^*(\lambda)=\lim_{r\to 1^-}\varphi(r\lambda)$ of $\varphi$ exists and belongs to $\overline{D}$.

It is worth to recall that if $\mu$ is a boundry measure of a holomorphic function $\varphi\in\MMM$, then $\mu$ is a weak-* limit of measures $\RE\varphi(r\lambda)\,d\LEBT(\lambda)$, when $r\to 1^-$ (see e.g. \cite[p. 10]{koosis}).
Here we treat complex measures as linear functionals on $\CL(\TT)$, the space of all complex-valued continuous functions on $\TT$ equipped with the supremum norm.
The weak-* convergence which we mentioned means that $$\int_{\TT}u(\lambda)\RE\varphi(r\lambda)\,d\LEBT(\lambda)\xrightarrow{r\to 1^-}\int_{\TT}u(\lambda)\,d\mu(\lambda),\;u\in\CL(\TT).$$ 

We need the following fact: if $\mu$ is a boundary measure of a function $\varphi\in\MMM$ and $\mu=g\,d\LEBT+\mu_s$ is the Lebesgue-Radon-Nikodym decomposition of $\mu$ with respect to $\LEBT$, i.e. $g\in L^1(\TT,\LEBT)$ and $\mu_s$ is a real Borel measure on $\TT$ singular to $\LEBT$, then $\RE\varphi^*(\lambda)=g(\lambda)$ for $\LEBT$-a.e. $\lambda\in\TT$ (see e.g. \cite[p. 11]{koosis}).
In particular, $\RE\varphi^*\in L^1(\TT,\LEBT)$ and if $\varphi_s$ is a holomorphic function with boundary measure $\mu_s$, then $\RE\varphi_s^*(\lambda)=0$ for $\LEBT$-a.e. $\lambda\in\TT$.

In what follows, given a $n$-tuple $\mu=(\mu_1,\ldots,\mu_n)$ of real Borel measures on $\TT$, by its Lebesgue-Radon-Nikodym decomposition with respect to $\LEBT$ we mean a unique decomposition $$\mu=g\,d\LEBT+\mu_s,$$ where $g=(g_1,\ldots,g_n):\TT\to\RR^n$ is Borel-measurable, $g_1,\ldots,g_n\in L^1(\TT,\LEBT)$ and $\mu_s=(\mu_{s,1},\ldots,\mu_{s,n})$ is a $n$-tuple of real Borel measures on $\TT$, each of which is singular to $\LEBT$.
In other words, for every $j$, $$\mu_j=g_j\,d\LEBT+\mu_{s,j}$$ is the Lebesgue-Radon-Nikodym decomposition of $\mu_j$ with respect to $\LEBT$.
We call the $n$-tuples $g\,d\LEBT$ and $\mu_s$ respectively the absolutely continuous part and the singular part of $\mu$ (omitting the phrase 'in its Lebesgue-Radon-Nikodym decomposition with respect to $\LEBT$', which is assumed by default). 
The following lemma is a useful variation on the Lebesgue-Radon-Nikodym decomposition of $n$-tuples of measures:

\begin{LEM}\label{lem_rozklad_miary_varrho_d_nu}
Let $\mu$ be a $n$-tuple of real Borel measures on $\TT$. 
Then there exist a unique finite, positive Borel measure $\nu$ on $\TT$ singular to $\LEBT$, a unique (up to a set of $\nu$ measure zero) Borel-measurable map $\varrho:\TT\to\partial\BB_n$ and a unique (up to a set of $\LEBT$ measure zero) Borel-measurable map $g:\TT\to\RR^n$ with components in $L^1(\TT,\LEBT)$ such that
\begin{equation}\label{eq_lem_rozklad_miary_varrho_d_nu_postac_miary}
\mu=g\,d\LEBT+\varrho\,d\nu. 
\end{equation}
In particular, $g\,d\LEBT$ and $\varrho\,d\nu$ are (respectively) the absolutely continuous and singular parts of $\mu$ in its Lebesgue-Radon-Nikodym decomposition with respect to $\LEBT$.
\end{LEM}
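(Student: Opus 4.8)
The plan is to build the decomposition out of the classical scalar Lebesgue--Radon--Nikodym and Radon--Nikodym theorems and to reserve the real work for uniqueness. First I would peel off the absolutely continuous part: applying the scalar Lebesgue--Radon--Nikodym decomposition with respect to $\LEBT$ to each coordinate gives $\mu_j=g_j\,d\LEBT+\mu_{s,j}$ with $g_j\in L^1(\TT,\LEBT)$ and $\mu_{s,j}$ a real Borel measure singular to $\LEBT$; set $g:=(g_1,\dots,g_n)$ and $\mu_s:=(\mu_{s,1},\dots,\mu_{s,n})$. It then remains to write the singular $n$-tuple $\mu_s$ in the form $\varrho\,d\nu$ with $\nu\geq0$ finite, $\nu$ singular to $\LEBT$, and $\varrho$ valued in $\partial\BB_n$.

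For this, put $\sigma:=|\mu_{s,1}|+\dots+|\mu_{s,n}|$, a finite positive Borel measure on $\TT$, singular to $\LEBT$ since each $|\mu_{s,j}|$ is. As $\mu_{s,j}\ll\sigma$ for every $j$, the Radon--Nikodym theorem yields Borel functions $f_j\in L^1(\TT,\sigma)$ with $\mu_{s,j}=f_j\,d\sigma$; write $f:=(f_1,\dots,f_n):\TT\to\RR^n$, so that $\mu_s=f\,d\sigma$. Now define the finite positive Borel measure $\nu$ by $d\nu:=\|f\|\,d\sigma$ (still singular to $\LEBT$), and set $\varrho:=f/\|f\|$ on $\{f\neq0\}$ and, say, $\varrho:=e_1$ on $\{f=0\}$; this $\varrho$ is Borel-measurable with values in $\partial\BB_n$. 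Since $\nu(\{f=0\})=\int_{\{f=0\}}\|f\|\,d\sigma=0$, we get $\varrho\,d\nu=\varrho\|f\|\,d\sigma=f\,d\sigma=\mu_s$, hence $\mu=g\,d\LEBT+\varrho\,d\nu$; and because $\varrho\,d\nu$ is singular to $\LEBT$, this is simultaneously the Lebesgue--Radon--Nikodym decomposition of $\mu$ with respect to $\LEBT$, which settles the last assertion.

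For uniqueness, observe first that the absolutely continuous and singular parts of $\mu$ with respect to $\LEBT$ are themselves unique, so $g\,d\LEBT$ (hence $g$ up to an $\LEBT$-null set) and $\mu_s=\varrho\,d\nu$ are already determined by $\mu$. Thus suppose $\varrho\,d\nu=\varrho'\,d\nu'$ with $\nu,\nu'$ finite positive measures singular to $\LEBT$ and $\varrho,\varrho':\TT\to\partial\BB_n$ Borel. Both $\nu$ and $\nu'$ are absolutely continuous with respect to $\kappa:=\nu+\nu'$, so $\nu=p\,d\kappa$ and $\nu'=p'\,d\kappa$ with $p,p'\geq0$ in $L^1(\TT,\kappa)$, and the identity of measures forces $\varrho p=\varrho'p'$ $\kappa$-a.e. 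Taking euclidean norms and using $\|\varrho\|\equiv\|\varrho'\|\equiv1$ together with $p,p'\geq0$ gives $p=p'$ $\kappa$-a.e., whence $\nu=\nu'$; and then $(\varrho-\varrho')p=0$ $\kappa$-a.e., so $\varrho=\varrho'$ on $\{p>0\}$, a set of full $\nu$-measure (as $\nu(\{p=0\})=\int_{\{p=0\}}p\,d\kappa=0$). This proves uniqueness of $\nu$ and of $\varrho$ up to a $\nu$-null set.

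The step needing the most care is this uniqueness of $\nu$ and $\varrho$ — specifically the null-set bookkeeping when passing to the common dominating measure $\kappa$ — although nothing in it is genuinely deep; the rest is a direct application of standard measure-decomposition theorems, the only mild point being to arrange $\varrho$ to take values in $\partial\BB_n$ at \emph{every} point rather than merely $\nu$-almost everywhere.
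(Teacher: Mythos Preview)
Your proof is correct and follows essentially the same route as the paper: both first split off the absolutely continuous part componentwise, then dominate the singular part by $\sigma=\sum_j|\mu_{s,j}|$, take Radon--Nikodym densities $f_j$, and set $\nu=\|f\|\,d\sigma$, $\varrho=f/\|f\|$; the uniqueness argument via the common dominating measure $\kappa=\nu+\nu'$ and comparison of norms is likewise identical to the paper's. The only organizational difference is that the paper isolates the ``$\mu_s=\varrho\,d\nu$'' step as a separate lemma valid on an arbitrary measurable space, whereas you carry it out directly for the singular part on $\TT$.
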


Lemma \ref{lem_rozklad_miary_varrho_d_nu} follows directly from the following general fact, applied to the singular part of $\mu$:

\begin{LEM}\label{lem_rozklad_miary_varrho_d_nu_ogolniejszy}
If $(X,\mathcal{A})$ is a measurable space and $\mu=(\mu_1,\ldots,\mu_n)$ is a $n$-tuple of real measures $\mu_j:\mathcal{A}\to\RR$, then there exists a unique finite, positive measure $\nu:\mathcal{A}\to[0,\infty)$ and a unique (up to a set of $\nu$ measure zero) $\mathcal{A}$-measurable map $\varrho:X\to\partial\BB_n$ such that $\mu=\varrho\,d\nu$.
\end{LEM}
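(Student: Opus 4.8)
The plan is to build $\nu$ as a natural "total mass" measure dominating every $\mu_j$, obtain $\varrho$ as a Radon--Nikodym derivative, and then rescale so that $\varrho$ actually lands on $\partial\BB_n$. First I would set $\nu_0:=\sum_{j=1}^n|\mu_j|$; this is a finite positive measure on $\mathcal{A}$, and each $\mu_j\ll\nu_0$, so by the classical Radon--Nikodym theorem there are $\mathcal{A}$-measurable functions $f_j=d\mu_j/d\nu_0\in L^1(X,\nu_0)$. Writing $f=(f_1,\ldots,f_n):X\to\RR^n$ we have $\mu=f\,d\nu_0$. The set $Z:=\{x\in X:f(x)=0\}$ carries no mass of any $\mu_j$, so discarding it changes nothing; on $X\setminus Z$ define $\nu:=\|f\|\,d\nu_0$ and $\varrho:=f/\|f\|$, and extend $\varrho$ arbitrarily (say by a fixed unit vector) on $Z$. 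Then $\nu$ is a finite positive measure, $\varrho$ is $\mathcal{A}$-measurable with $\varrho(x)\in\partial\BB_n$ everywhere, and $\varrho\,d\nu=\frac{f}{\|f\|}\,\|f\|\,d\nu_0=f\,d\nu_0=\mu$ on $X\setminus Z$, hence on all of $X$. Finiteness of $\nu$ follows since $\|f\|\leq\sum_j|f_j|$ and each $f_j\in L^1(\nu_0)$.

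For uniqueness, suppose $\mu=\varrho\,d\nu=\varrho'\,d\nu'$ with both $\nu,\nu'$ finite positive and $\varrho,\varrho'$ valued in $\partial\BB_n$. Applying the scalar statement coordinatewise, $\varrho_j\,d\nu$ and $\varrho_j'\,d\nu'$ are both Lebesgue decompositions-free expressions for $\mu_j$; testing against the total variation, one checks that for any $\mathcal{A}$-measurable set $A$ one has $\nu(A)=\sup\sum_k\|\mu(A_k)\|$, the supremum taken over finite measurable partitions $\{A_k\}$ of $A$ — indeed the $\varrho$ representation immediately gives $\sum_k\|\mu(A_k)\|=\sum_k\|\int_{A_k}\varrho\,d\nu\|\leq\sum_k\int_{A_k}\|\varrho\|\,d\nu=\nu(A)$, while choosing the partition to separate directions of $\varrho$ finely (or, more cleanly, using $\|\mu(A_k)\|\geq\langle\mu(A_k),v\rangle$ for fixed unit $v$ and summing) recovers equality in the limit. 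Thus $\nu$ is forced by $\mu$ alone, so $\nu=\nu'$; and then $\varrho=d\mu/d\nu$ coordinatewise as a Radon--Nikodym derivative, so $\varrho=\varrho'$ up to a $\nu$-null set. Alternatively, and more economically, one notes $\nu$ must be mutually absolutely continuous with $\sum_j|\mu_j|$ (since $\varrho$ is bounded above and bounded below away from $0$ in norm), writes $\varrho=f/\|f\|$ forcibly, and concludes.

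To finish, Lemma \ref{lem_rozklad_miary_varrho_d_nu} is obtained by applying Lemma \ref{lem_rozklad_miary_varrho_d_nu_ogolniejszy} on $(\TT,\text{Borel})$ to the singular part $\mu_s=(\mu_{s,1},\ldots,\mu_{s,n})$ of $\mu$ in its Lebesgue--Radon--Nikodym decomposition $\mu=g\,d\LEBT+\mu_s$ with respect to $\LEBT$: this yields $\nu\geq 0$ finite, Borel, with $\mu_s=\varrho\,d\nu$, and $\nu$ is singular to $\LEBT$ because $\nu\ll\sum_j|\mu_{s,j}|$ and each $\mu_{s,j}\perp\LEBT$. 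Uniqueness of $g$ (up to $\LEBT$-null sets) is uniqueness of the Radon--Nikodym derivative, and uniqueness of $\nu,\varrho$ is inherited from Lemma \ref{lem_rozklad_miary_varrho_d_nu_ogolniejszy}; the parenthetical last sentence of the statement is then immediate. The only genuinely delicate point is the uniqueness of $\nu$ in Lemma \ref{lem_rozklad_miary_varrho_d_nu_ogolniejszy} — one must argue that a representation $\mu=\varrho\,d\nu$ with $\|\varrho\|\equiv 1$ pins down $\nu$ intrinsically (e.g. via the total-variation/partition formula above, or via mutual absolute continuity), and this is where I expect to spend the most care; everything else is a direct appeal to classical Radon--Nikodym theory.
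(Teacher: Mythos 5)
Your existence argument is essentially the paper's: both set $\widetilde{\nu}:=\sum_j|\mu_j|$, take Radon--Nikodym derivatives $F_j=d\mu_j/d\widetilde{\nu}$, and put $\nu:=\|F\|\,d\widetilde{\nu}$, $\varrho:=F/\|F\|$ (the paper additionally notes that $\sum_j|F_j|=1$ holds $\widetilde{\nu}$-a.e., so the set where $F$ vanishes is even $\widetilde{\nu}$-null, not merely negligible for each $\mu_j$). Where you diverge is uniqueness. You propose to pin down $\nu$ intrinsically as the Euclidean-norm variation of the vector measure, $\nu(A)=\sup\sum_k\|\mu(A_k)\|$ over finite measurable partitions of $A$; this is correct and does work (the reverse inequality follows by partitioning according to the approximate direction of $\varrho$, exactly as you sketch), but it is the step you rightly flag as requiring the most care. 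The paper's uniqueness argument is shorter and avoids partitions entirely: given two representations $\varrho\,d\nu=\varrho'\,d\nu'$, set $\omega:=\nu+\nu'$ and write $\nu=G\,d\omega$, $\nu'=G'\,d\omega$; then $G\varrho=G'\varrho'$ $\omega$-a.e., and taking Euclidean norms gives $G=G'$ $\omega$-a.e.\ because $\|\varrho\|=\|\varrho'\|=1$, whence $\nu=\nu'$ and then $\varrho=\varrho'$ $\nu$-a.e. Your second, ``more economical'' suggestion via mutual absolute continuity with $\sum_j|\mu_j|$ is closer in spirit to this and would close the argument with less effort than the variation formula. The deduction of Lemma \ref{lem_rozklad_miary_varrho_d_nu} by applying the general lemma to the singular part, and the observation that the resulting $\nu$ is singular to $\LEBT$, match the paper.
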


\begin{proof}[Proof of Lemma \ref{lem_rozklad_miary_varrho_d_nu_ogolniejszy}]
Define a finite, positive measure $\widetilde{\nu}$ as $$\widetilde{\nu}:=|\mu_1|+\ldots+|\mu_n|.$$
Since each $\mu_j$ is absolutely continuous with respect to $\widetilde{\nu}$, from the classical Radon-Nikodym theorem it follows that there exists an $\mathcal{A}$-measurable map $F=(F_1,\ldots,F_n):X\to\RR^n$ such that $F_1,\ldots,F_n\in L^1(X,\widetilde{\nu})$ and $$\mu_j=F_j\,d\widetilde{\nu},\;j=1,\ldots,n.$$
We have $|\mu_j|=|F_j|\,d\widetilde{\nu}$, so
\begin{equation*}
|F_1(x)|+\ldots+|F_n(x)|=1\text{ for }\widetilde{\nu}\text{-a.e. }x\in X.
\end{equation*}
Let $\varrho:X\to\partial\BB_n$ be an $\mathcal{A}$-measurable map such that $F(x)=\varrho(x)\|F(x)\|$ for $\widetilde{\nu}$-a.e. $x\in X$.
Set $\nu:=\|F(x)\|\,d\widetilde{\nu}(x)$.
We have $$\mu=F\,d\widetilde{\nu}=\varrho\,d\nu,$$ what gives a desired decomposition.

It remains to show uniqueness.
Assume that there are $\nu'$, $\varrho'$ satisfying the same conditions as $\nu$, $\varrho$.
We have $\varrho\,d\nu=\varrho'\,d\nu'$.
Set $\omega:=\nu+\nu'$ and let $G,G':X\to[0,\infty)$ be $\mathcal{A}$-measurable functions, integrable with respect to $\omega$ and such that $\nu=G\,d\omega$ and $\nu'=G'\,d\omega$.
We have $$G\varrho\,d\omega=\varrho\,d\nu=\varrho'\,d\nu'=G'\varrho'\,d\omega.$$
Thus, the maps $G\varrho$ and $G'\varrho'$ are equal $\omega$-a.e. on $X$.
This gives $$G(x)=\|G(x)\varrho(x)\|=\|G'(x)\varrho'(x)\|=G'(x)\text{ for }\omega\text{-a.e. }x\in X.$$
In consequence, $\nu=\nu'$ and $\nu$-almost everywhere on $X$ there holds the equality $\varrho=\varrho'$, because $\varrho\,d\nu=\varrho'\,d\nu'$.
\end{proof}

\begin{EX}\label{ex_rozklad_miary_dlebt_alfa_j_delta_j}
In this example we are going to decompose as in Lemma \ref{lem_rozklad_miary_varrho_d_nu} the following $n$-tuple of measures: $$\mu=g\,d\LEBT+(\alpha_1\delta_{\lambda_1},\ldots,\alpha_n\delta_{\lambda_n}),$$ where $g=(g_1,\ldots,g_n)$, $g_1,\ldots,g_n\in L^1(\TT,\LEBT)$, $\alpha_1,\ldots,\alpha_n\in\RR$ and $\lambda_1,\ldots,\lambda_n\in\TT$.
As the measure $\nu$ is required to be singular with respect to $\LEBT$, the first part of desired decomposition is equal to $g\,d\LEBT$ and the second part comes from Lemma \ref{lem_rozklad_miary_varrho_d_nu_ogolniejszy} applied to the measure $(\alpha_1\delta_{\lambda_1},\ldots,\alpha_n\delta_{\lambda_n})$.
To find the latter part, we follow the proof of Lemma \ref{lem_rozklad_miary_varrho_d_nu_ogolniejszy} with $X=\TT$ and $\mathcal{A}$ being the $\sigma$-field of Borel subsets of $\TT$.

For $j\in\lbrace 1,\ldots,n\rbrace$ let $$A_j:=\lbrace l\in\lbrace 1,\ldots,n\rbrace:\lambda_l=\lambda_j\rbrace.$$
We have
$$\widetilde{\nu}=|\alpha_1|\delta_{\lambda_1}+\ldots+|\alpha_n|\delta_{\lambda_n},$$
so we may set
$$F:=\left(\frac{\alpha_1}{\sum_{l\in A_1}|\alpha_l|}\chi_{\lbrace\lambda_1\rbrace},\ldots,\frac{\alpha_n}{\sum_{l\in A_n}|\alpha_l|}\chi_{\lbrace\lambda_n\rbrace}\right)$$
(the mapping $F$ is $\widetilde{\nu}$-almost everywhere well defined, because if $\sum_{l\in A_j}|\alpha_l|=0$ for some $j$, then $\chi_{\lbrace\lambda_j\rbrace}$ is $\widetilde{\nu}$-a.e. equal to $0$ and so is the $j$-th component of the right hand side of the above definition).
Since $\nu=\|F(\lambda)\|\,d\widetilde{\nu}(\lambda)$, the measure $\nu$ is supported on the set $\lbrace\lambda_1,\ldots,\lambda_n\rbrace$ and $$\chi_{\lbrace\lambda_j\rbrace} d\nu=\sqrt{\sum_{l\in A_j}\alpha_l^2}\;\delta_{\lambda_j}.$$
This gives
\begin{equation}\label{eq_exrmlad_miara_nu}
\nu = \sum_{j=1}^{n} \frac{\sqrt{\sum_{l\in A_j}\alpha_l^2}}{\# A_j}\,\delta_{\lambda_j},
\end{equation}
where $\# A_j$ denotes the number of elements of the set $A_j$.

A map $\varrho:\TT\to\partial\BB_n$ has to be taken such that the equality $F(\lambda)=\varrho(\lambda)\|F(\lambda)\|$ holds for $\widetilde{\nu}$-a.e. $\lambda\in\TT$, or equivalently, for $\nu$-a.e. $\lambda\in\TT$.
It means that
\begin{equation}\label{eq_exrmlad_odwzorowanie_ro}
\varrho=\left(\frac{\alpha_1}{\sqrt{\sum_{l\in A_1}\alpha_l^2}}\chi_{\lbrace\lambda_1\rbrace},\ldots,\frac{\alpha_n}{\sqrt{\sum_{l\in A_n}\alpha_l^2}}\chi_{\lbrace\lambda_n\rbrace}\right)\quad\nu\text{-a.e. on }\TT.
\end{equation}
Note that the right hand side is $\nu$-almost everywhere well defined and it does not matter what values $\varrho$ takes outside the set $\lbrace\lambda_1,\ldots,\lambda_n\rbrace$.
The desired decomposition consists of the map $g$, the measure $\nu$ given by \eqref{eq_exrmlad_miara_nu} and a map $\varrho$ satisfying \eqref{eq_exrmlad_odwzorowanie_ro}.

The situation becomes simpler in the case when $\lambda_1,\ldots,\lambda_n$ are pairwise disjoint.
We then have
$$\nu=|\alpha_1|\delta_{\lambda_1}+\ldots+|\alpha_n|\delta_{\lambda_n}$$
and
$$\varrho=\left(\frac{\alpha_1}{|\alpha_1|}\chi_{\lbrace\lambda_1\rbrace},\ldots,\frac{\alpha_n}{|\alpha_n|}\chi_{\lbrace\lambda_n\rbrace}\right)$$
$\widetilde{\nu}$-almost everywhere on $\TT$ (again, $j$-th component of $\varrho$ may be anyhow if $\alpha_j=0$).
\end{EX}

For a convex tube domain $D\subset\CC^n$ introduce the following sets, describing some geometric properties of its base.
Define
\begin{eqnarray*}
W_D &:=& \left\lbrace v\in\RR^n: \sup_{x\in\RE D}\langle x,v\rangle<\infty\right\rbrace,\\
S_D &:=& \left\lbrace y\in\RR^n: \forall v\in W_D:\langle y,v\rangle\leq 0\right\rbrace
\end{eqnarray*}
and for a vector $v\in\RR^n$,
\begin{equation*}
P_D(v):=\lbrace p\in\overline{\RE D}: \langle x-p,v\rangle<0\text{ for all }x\in\RE D\rbrace.
\end{equation*}
It is clear that all these sets are convex, $P_D(v)\subset\partial\RE D$ and if $v\in S_D$, $w\in W_D$ and $t\geq 0$, then $tv\in S_D$ and $tw\in W_D$, i.e. the sets $S_D$ and $W_D$ are infinite cones.
Next observation presents a number of their elementary geometric properties.

\begin{OBS}\label{obs_g_wlasnosci_pd}
Let $D\subset\CC^n$ be a convex tube domain and let $v\in\RR^n$.
Then:
\begin{enumerate}[(i)]
% \item\label{obs_g_wlasnosci_pd_wypukle} the sets $P_D(v)$, $W_D$ and $S_D$ are convex,
% \item\label{obs_g_wlasnosci_pd_stozki} the sets $W_D$ and $S_D$ are infinite cones, i.e. $t S_D\subset S_D$ and $t W_D\subset W_D$ for every $t\geq 0$,
\item\label{obs_g_wlasnosci_pd_pd_domkniety_wypukly} the sets $P_D(v)$ and $S_D$ are closed,
\item\label{obs_g_wlasnosci_pd_niepusty_impl_v_w_wd} if $P_D(v)\neq\varnothing$, then $v\in W_D$,
\item\label{obs_g_wlasnosci_pd_odcinki_w_pd} if $p,q\in P_D(v)$, then the vectors $p-q$ and $v$ are orthogonal,
% \item\label{obs_g_wlasnosci_pd_wd_wypukly} the set $W_D$ is convex,
% \item\label{obs_g_wlasnosci_pd_granica_pm_w_pd_v} if $(v_m)_{m=1}^\infty\in\RR^n$, $v_m\to v$, $v\neq 0$, $p_m\in P_D(v_m)$ and $p_m\to p\in\overline{\RE D}$, then $p\in P_D(v)$.
% \item\label{obs_g_wlasnosci_pd_6} if $D\in\mathcal{D}_n$, then $W_D\subset[0,\infty)^n$ and $e_1,\ldots,e_n\in W_D$,
\item\label{obs_g_wlasnosci_pd_obszar_scisle_wypukly} if the domain $\RE D$ is strictly convex (in the geometric sense, i.e. it is convex and $\partial\RE D$ does not contain any non-trivial segments), then the set $P_D(v)$ contains at most one element,
% \item\label{obs_g_wlasnosci_pd_sd_stozek_wypukly} the set $S_D$ is a closed, convex, infinite cone with vertex at the origin, privided that $S\neq\lbrace 0\rbrace$,
% \item\label{obs_g_wlasnosci_pd_v_w_brzegu_wd_iff_sd_cap_v_prostop_nietryw} if $v\in\overline{W_D}$, then $S_D\cap\lbrace v\rbrace^{\perp}\neq\lbrace 0\rbrace$ iff $v\in\partial W_D$,
\item\label{obs_g_wlasnosci_pd_elementy_sd_jako_kierunki} $v\in S_D$ iff for all $a\in\RE D$ and $t\geq 0$ there holds $a+tv\in \RE D$,
\item\label{obs_g_wlasnosci_pd_wd_niepuste_wnetrze} if $\RE D$ contains no complex affine lines, then $\INT W_D\neq\varnothing$,
\item\label{obs_g_wlasnosci_pd_baza_ograniczona} if $\RE D$ is bounded, then $W_D=\RR^n$ and $S_D=\lbrace 0\rbrace$.
\end{enumerate}
\end{OBS}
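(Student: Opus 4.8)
The plan is to verify the seven items of Observation \ref{obs_g_wlasnosci_pd} one at a time, each being a short argument from the definitions of $W_D$, $S_D$, and $P_D(v)$. Since $\RE D$ is convex, $\overline{\RE D}$ is a closed convex set, and several items reduce to elementary separation/convexity facts.

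For item \ref{obs_g_wlasnosci_pd_pd_domkniety_wypukly}, I would note that $S_D=\bigcap_{v\in W_D}\lbrace y:\langle y,v\rangle\leq 0\rbrace$ is an intersection of closed half-spaces, hence closed. For $P_D(v)$, the defining condition $\langle x-p,v\rangle<0$ for all $x\in\RE D$ is equivalent, by continuity and density, to $\langle x-p,v\rangle\leq 0$ for all $x\in\overline{\RE D}$ together with $p\in\partial\RE D$; more carefully, if $p_k\to p$ with $p_k\in P_D(v)$ then $\langle x-p,v\rangle\leq 0$ for all $x\in\RE D$, and since $\RE D$ is open this inequality is automatically strict (if $\langle x_0-p,v\rangle=0$ for some $x_0\in\RE D$ then moving slightly inside $\RE D$ in the direction $v$ would violate it), so $p\in P_D(v)$ provided we also check $p\in\overline{\RE D}$, which is clear. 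Item \ref{obs_g_wlasnosci_pd_niepusty_impl_v_w_wd} is immediate: if $p\in P_D(v)$ then $\langle x,v\rangle<\langle p,v\rangle$ for all $x\in\RE D$, so $\sup_{x\in\RE D}\langle x,v\rangle\leq\langle p,v\rangle<\infty$, i.e. $v\in W_D$. Item \ref{obs_g_wlasnosci_pd_odcinki_w_pd}: for $p,q\in P_D(v)$, taking $x\to q$ in $\langle x-p,v\rangle<0$ gives $\langle q-p,v\rangle\leq 0$, and symmetrically $\langle p-q,v\rangle\leq 0$, hence $\langle p-q,v\rangle=0$. Item \ref{obs_g_wlasnosci_pd_obszar_scisle_wypukly} follows from \ref{obs_g_wlasnosci_pd_odcinki_w_pd}: if $p,q\in P_D(v)$ with $p\neq q$, then since $P_D(v)$ is convex the whole segment $[p,q]\subset P_D(v)\subset\partial\RE D$, contradicting strict convexity.

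For item \ref{obs_g_wlasnosci_pd_elementy_sd_jako_kierunki} I would argue both directions. If $a+tv\in\RE D$ for all $a\in\RE D$, $t\geq 0$, then for any $w\in W_D$ we get $\langle a,w\rangle+t\langle v,w\rangle=\langle a+tv,w\rangle\leq\sup_{\RE D}\langle\cdot,w\rangle<\infty$ for all $t\geq 0$, forcing $\langle v,w\rangle\leq 0$, so $v\in S_D$. Conversely, suppose $v\in S_D$ but $a+t_0v\notin\RE D$ for some $a\in\RE D$, $t_0>0$; let $t^*:=\sup\lbrace t\geq 0:a+sv\in\RE D\text{ for all }s\in[0,t]\rbrace\in[0,t_0]$, so $p:=a+t^*v\in\partial\RE D$. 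By the supporting hyperplane theorem there is $w\neq 0$ with $\langle x-p,w\rangle\leq 0$ for all $x\in\overline{\RE D}$; then $w\in W_D$ (since $\langle x,w\rangle\leq\langle p,w\rangle$), so $\langle v,w\rangle\leq 0$, hence $\langle a+sv-p,w\rangle=\langle a-p,w\rangle+s\langle v,w\rangle$ — using $a\in\RE D$ open and $p$ on the boundary, $\langle a-p,w\rangle<0$ is not guaranteed, so instead I would use that points $a+sv$ for $s<t^*$ lie in $\RE D$ hence satisfy $\langle a+sv-p,w\rangle\leq 0$, i.e. $(s-t^*)\langle v,w\rangle\leq 0$, giving $\langle v,w\rangle\geq 0$; combined with $\langle v,w\rangle\leq 0$ we get $\langle v,w\rangle=0$, so the whole line $a+\RR v$ satisfies $\langle\cdot-p,w\rangle\leq 0$ with equality along it only if... — here I expect the main subtlety and would instead invoke convexity directly: $\RE D$ convex and open, $v\in S_D$, and the recession cone of $\overline{\RE D}$ equals $\lbrace y:\langle y,w\rangle\leq 0\ \forall w\in W_D\rbrace$ by a standard result on recession cones of closed convex sets, which is exactly $S_D$; then $a\in\RE D$ and $v$ in the recession cone gives $a+tv\in\overline{\RE D}$ for all $t\geq 0$, and since $a$ is interior and $\RE D$ convex, $a+tv$ is interior too.

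For item \ref{obs_g_wlasnosci_pd_wd_niepuste_wnetrze}, I would use the standard fact that for a closed convex set $\overline{\RE D}$, $\INT W_D\neq\varnothing$ iff $\overline{\RE D}$ contains no affine line; since $\RE D$ containing no real affine line is equivalent to $\overline{\RE D}$ containing none (the recession cone is pointed $\Leftrightarrow$ its dual has interior), this is precisely the hypothesis. Alternatively, argue by contradiction: if $W_D$ spans a proper subspace or is not full-dimensional, then $S_D=W_D^\circ$ (polar cone) contains a line, and by \ref{obs_g_wlasnosci_pd_elementy_sd_jako_kierunki} that line translated into $\RE D$ stays in $\RE D$, a contradiction. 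Finally, item \ref{obs_g_wlasnosci_pd_baza_ograniczona}: if $\RE D$ is bounded then $\sup_{x\in\RE D}\langle x,v\rangle<\infty$ trivially for every $v$, so $W_D=\RR^n$; then $S_D=\lbrace y:\langle y,v\rangle\leq 0\ \forall v\in\RR^n\rbrace=\lbrace 0\rbrace$. The main obstacle I anticipate is making the recession-cone / supporting-hyperplane argument in item \ref{obs_g_wlasnosci_pd_elementy_sd_jako_kierunki} fully rigorous without appealing to more machinery than necessary; everything else is routine manipulation of the definitions.
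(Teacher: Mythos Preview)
Your arguments are correct and largely match the paper's, which treats most items as immediate and cites \cite{zajac} for \eqref{obs_g_wlasnosci_pd_wd_niepuste_wnetrze}. Two minor comments. In \eqref{obs_g_wlasnosci_pd_pd_domkniety_wypukly} you implicitly use $v\neq 0$ when promoting $\langle x-p,v\rangle\leq 0$ to a strict inequality on the open set $\RE D$; this holds because $P_D(0)=\varnothing$, so the existence of the approximating sequence already forces $v\neq 0$ (the paper makes this explicit). For \eqref{obs_g_wlasnosci_pd_odcinki_w_pd} your limit argument $x\to q$ is actually cleaner than the paper's midpoint trick.

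The one place you make life harder than necessary is \eqref{obs_g_wlasnosci_pd_elementy_sd_jako_kierunki}, which the paper lists among the ``immediate'' items. Your first attempt via a supporting hyperplane at the boundary point $p=a+t^*v$ stalls exactly where you say, and you then fall back on the identification of $S_D$ with the recession cone of $\overline{\RE D}$. That works, but the direct argument avoids both detours: if $v\in S_D$, $a\in\RE D$, $t_0>0$ and $a+t_0v\notin\RE D$, separate the point $a+t_0v$ from the open convex set $\RE D$ to get $w\neq 0$ with $\langle x,w\rangle\leq\langle a+t_0v,w\rangle$ for all $x\in\RE D$. Then $w\in W_D$, so $\langle v,w\rangle\leq 0$; but taking $x=a$ (and using openness to get strict inequality) gives $\langle a,w\rangle<\langle a,w\rangle+t_0\langle v,w\rangle$, i.e.\ $\langle v,w\rangle>0$, a contradiction. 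The key is to separate from the exterior point rather than support at the boundary point. Your contrapositive argument for \eqref{obs_g_wlasnosci_pd_wd_niepuste_wnetrze} (empty interior $\Rightarrow$ $W_D$ lies in a hyperplane $\Rightarrow$ $S_D$ contains a line $\Rightarrow$ $\RE D$ contains a line via \eqref{obs_g_wlasnosci_pd_elementy_sd_jako_kierunki}) is a nice self-contained alternative to the paper's citation.
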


\begin{proof}
\eqref{obs_g_wlasnosci_pd_pd_domkniety_wypukly}. If $(p_m)_m\subset P_D(v)$ and $p_m\to p$, then $\langle x-p,v\rangle\leq 0$ for each $x\in\RE D$.
As $P_D(v)\neq\varnothing$, we have $v\neq 0$, so the map $x\mapsto\langle x-p,v\rangle$ is open.
It is non-positive on the open set $\RE D$, so it is in fact negative on $\RE D$.

\eqref{obs_g_wlasnosci_pd_odcinki_w_pd}. If $p,q\in P_D(v)$, then $\frac12(p+q)\in P_D(v)$.
Since $p,q\in\overline{\RE D}$, we have $\langle p-\frac12(p+q),v\rangle\leq 0$ and $\langle q-\frac12(p+q),v\rangle\leq 0$, what gives $\langle p-q,v\rangle=0$.

\eqref{obs_g_wlasnosci_pd_wd_niepuste_wnetrze}. It follows e.g. from \cite[Observation 2.4]{zajac}.

\eqref{obs_g_wlasnosci_pd_niepusty_impl_v_w_wd}, \eqref{obs_g_wlasnosci_pd_obszar_scisle_wypukly}, \eqref{obs_g_wlasnosci_pd_elementy_sd_jako_kierunki}, \eqref{obs_g_wlasnosci_pd_baza_ograniczona}.
The proofs are immediate.
\end{proof}

\section{Description of complex geodesics in an arbitrary convex tube domain and its applications in special classes of domains}\label{sect_special_classes}

In this section we formulate the main result of this paper, Theorem \ref{th_postac_geod}.
It gives a full description of all complex geodesics for $D$ in terms of its geometric properties, i.e. the sets $P_D(v)$, $W_D$, $S_D$.
In the latter part of this section we show how it can be applied to obtain formulas for boundary measures of complex geodesics in some special classes of convex tube domains.
We also give some examples.
The proof of Theorem \ref{th_postac_geod} is presented in Section \ref{sect_dowod_tw_postac_geod}.

\begin{THE}\label{th_postac_geod}
Let $D\subset\CC^n$ be a convex tube domain containing no complex affine lines and let $\varphi\in\MMM^n$ be a holomorphic map with boundary measure $\mu$.
Consider the decomposition
\begin{equation*}
\mu = g\,d\LEBT+\varrho\,d\nu,
\end{equation*}
where $g=(g_1,\ldots,g_n):\TT\to\RR^n$ and $\varrho:\TT\to\partial\BB_n$ are Borel-measurable maps, $g_1,\ldots,g_n\in L^1(\TT,\LEBT)$ and $\nu$ is a positive, finite, Borel measure on $\TT$ singular to $\LEBT$.

Then
\begin{center}
$\varphi(\DD)\subset D$ and $\varphi$ is a complex geodesic for $D$
\end{center}
iff there exists a map $h\in\HH^n$, $h\not\equiv 0$, such that the following conditions hold:
\begin{enumerate}[(i)]
\item\label{th_postac_geod_g_in_pd_h} $g(\lambda)\in P_D(\BLHL{})$ for $\LEBT$-a.e. $\lambda\in\TT$,
\item\label{th_postac_geod_h_bullet_rho_wieksze} $\langle\BLHL{},\varrho(\lambda)\rangle\geq 0$ for $\nu$-a.e. $\lambda\in\TT$,
\item\label{th_postac_geod_rho_w_SD} $\varrho(\lambda)\in S_D$ for $\nu$-a.e. $\lambda\in\TT$,
\item\label{th_postac_geod_fi_0} $\RE\varphi(0)\in\RE D$.
\end{enumerate}

Moreover, if $\varphi(\DD)\subset D$, $\varphi$ is a complex geodesic for $D$ and $h\in\HH^n$, $h\not\equiv 0$ is a map satisfying the conditions \eqref{th_postac_geod_g_in_pd_h} - \eqref{th_postac_geod_fi_0}, then there also hold:
\begin{enumerate}[(i)]
\setcounter{enumi}{4}
\item\label{th_postac_geod_h_bullet_rho_rowne_0} $\varrho(\lambda)\in S_D\cap\lbrace \BLHL{}\rbrace^{\perp}$ for $\nu$-a.e. $\lambda\in\TT$,
\item\label{th_postac_geod_nu_skupione_na} $\nu(\lbrace\lambda\in\TT:\BLHL{}\in\INT W_D\rbrace)=0$.
\item\label{th_postac_geod_blhl_in_WD} $\BLHL{}\in\overline{W_D}$ for every $\lambda\in\TT$.
\end{enumerate}
\end{THE}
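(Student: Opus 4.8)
The plan is to build on the equivalent condition for complex geodesics from \cite{zajac}, which characterizes complex geodesics via the existence of a map $h\in\HH^n$ satisfying an integral identity against the boundary measure $\mu$. The starting point should be the classical Lempert-type criterion: $\varphi\in\OO(\DD,D)$ is a complex geodesic for $D$ if and only if there is a nonzero $h\in\HH^n$ such that $\RE\langle h^*(\lambda)\bar\lambda,\,\text{something}\rangle$ behaves correctly — more precisely, in the tube setting one obtains that $\varphi$ is a complex geodesic iff $\varphi(\DD)\subset D$, $\RE\varphi(0)\in\RE D$, and there exists $h\in\HH^n\setminus\{0\}$ with $\langle x-\varphi^*(\lambda),\BLHL{}\rangle\leq 0$ for all $x\in\RE D$ and $\LEBT$-a.e. $\lambda\in\TT$, together with the condition $\int_{\TT}\langle\BLHL{},d\mu(\lambda)\rangle\leq 0$ (or $=0$). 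I would first recall this criterion precisely and then translate each piece through the decomposition $\mu=g\,d\LEBT+\varrho\,d\nu$.

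The main work is to show the equivalence of that criterion with conditions \eqref{th_postac_geod_g_in_pd_h}–\eqref{th_postac_geod_fi_0}. First I would observe that $\RE\varphi^*(\lambda)=g(\lambda)$ for $\LEBT$-a.e.\ $\lambda$, by the fact recalled in the Preliminaries about the Lebesgue–Radon–Nikodym decomposition of boundary measures; so the condition $\langle x-\varphi^*(\lambda),\BLHL{}\rangle\leq 0$ for all $x\in\RE D$ becomes $\langle x-g(\lambda),\BLHL{}\rangle\leq 0$ for all $x\in\RE D$, which — once one knows $g(\lambda)\in\overline{\RE D}$ and invokes the openness argument from the proof of Observation \ref{obs_g_wlasnosci_pd}\eqref{obs_g_wlasnosci_pd_pd_domkniety_wypukly} to upgrade $\leq$ to $<$ — is exactly $g(\lambda)\in P_D(\BLHL{})$, i.e. \eqref{th_postac_geod_g_in_pd_h}. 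Next I would expand $\int_{\TT}\langle\BLHL{},d\mu(\lambda)\rangle=\int_{\TT}\langle\BLHL{},g(\lambda)\rangle\,d\LEBT(\lambda)+\int_{\TT}\langle\BLHL{},\varrho(\lambda)\rangle\,d\nu(\lambda)$. A short computation with the Fourier/Laurent coefficients of $h$ (using $h(\lambda)=\bar a\lambda^2+b\lambda+a$) shows the first integral vanishes, so the sign condition reduces to $\int_{\TT}\langle\BLHL{},\varrho(\lambda)\rangle\,d\nu\leq 0$. Combined with \eqref{th_postac_geod_g_in_pd_h}, which forces $\langle\BLHL{},\varrho(\lambda)\rangle\geq 0$ $\nu$-a.e.\ — here one uses that $\varrho(\lambda)\in S_D$, together with the definition of $S_D$ and the fact that $P_D(\BLHL{})\neq\varnothing$ implies $\BLHL{}\in W_D$ by Observation \ref{obs_g_wlasnosci_pd}\eqref{obs_g_wlasnosci_pd_niepusty_impl_v_w_wd} — one gets the integrand is a.e.\ zero. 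I would then argue the two remaining structural conditions: that $\varrho(\lambda)\in S_D$ $\nu$-a.e.\ (this is essentially the requirement that the singular part of $\varphi^*$ points in "recession" directions of $\RE D$, which can be extracted from $\varphi(\DD)\subset D$ and the boundary behaviour of the singular part) and \eqref{th_postac_geod_fi_0}. The converse direction is mostly a matter of reassembling these facts into the $\int\langle\BLHL{},d\mu\rangle=0$ identity and checking $\varphi(\DD)\subset D$.

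For the "moreover" part: condition \eqref{th_postac_geod_h_bullet_rho_rowne_0} is immediate, since we already showed $\langle\BLHL{},\varrho(\lambda)\rangle\geq 0$ $\nu$-a.e.\ from \eqref{th_postac_geod_g_in_pd_h} while the sign condition forces the integral to be $\leq 0$, hence $\langle\BLHL{},\varrho(\lambda)\rangle=0$ $\nu$-a.e.; combined with \eqref{th_postac_geod_rho_w_SD} this gives $\varrho(\lambda)\in S_D\cap\{\BLHL{}\}^\perp$. For \eqref{th_postac_geod_nu_skupione_na}: if $\BLHL{}\in\INT W_D$ on a set of positive $\nu$-measure, then on that set $\varrho(\lambda)\in S_D$ with $\langle\varrho(\lambda),\BLHL{}\rangle=0$; but $\BLHL{}\in\INT W_D$ together with $\langle\varrho(\lambda),w\rangle\leq 0$ for all $w\in W_D$ (definition of $S_D$) forces $\varrho(\lambda)=0$ by a standard supporting-functional / interior-point argument, contradicting $\varrho(\lambda)\in\partial\BB_n$. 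For \eqref{th_postac_geod_blhl_in_WD}: by \eqref{th_postac_geod_g_in_pd_h} we have $P_D(\BLHL{})\neq\varnothing$ for a.e.\ $\lambda$, hence $\BLHL{}\in W_D$ for a.e.\ $\lambda$; since $\lambda\mapsto\BLHL{}$ is continuous on $\TT$ and $W_D$ is not necessarily closed, one passes to the closure and uses continuity to conclude $\BLHL{}\in\overline{W_D}$ for every $\lambda$. The main obstacle I anticipate is the careful handling of the singular part $\varrho\,d\nu$: precisely justifying that $\varrho(\lambda)\in S_D$ $\nu$-a.e.\ from $\varphi(\DD)\subset D$ (as opposed to merely from the geodesic criterion), and controlling the interplay between the weak-$*$ boundary convergence of $\RE\varphi(r\lambda)\,d\LEBT$ to $\mu$ and the geometry of the cones $W_D$, $S_D$; the Fourier-coefficient computation showing $\int_{\TT}\langle\BLHL{},g\rangle\,d\LEBT=0$ and the openness/supporting-functional arguments are routine by comparison.
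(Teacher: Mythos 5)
Your overall architecture is the right one --- reduce to the measure-theoretic criterion of \cite{zajac}, split it along the decomposition $\mu=g\,d\LEBT+\varrho\,d\nu$, and extract condition \eqref{th_postac_geod_rho_w_SD} from $\varphi(\DD)\subset D$ --- but the criterion itself is misstated in a way that leaves a genuine gap at condition \eqref{th_postac_geod_h_bullet_rho_wieksze}. The actual criterion (Theorem \ref{th_wkw_miara}, i.e.\ \cite[Theorem 1.2]{zajac}) is that $\langle\BLHL{},\RE z\,\DLEBT(\lambda)-d\mu(\lambda)\rangle$ is a \emph{negative measure} for every $z\in D$; splitting this measure along a Borel carrier of the singular part (Lemma \ref{lem_g_wkw_rozklad_lrn}) yields both $g(\lambda)\in P_D(\BLHL{})$ a.e.\ \emph{and} the measure inequality $\langle\BLHL{},d\mu_s\rangle\geq 0$, and the latter is exactly \eqref{th_postac_geod_h_bullet_rho_wieksze}. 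Replacing the measure inequality by the single scalar condition $\int_{\TT}\langle\BLHL{},d\mu\rangle\leq 0$ loses precisely this information: a scalar inequality cannot produce a $\nu$-a.e.\ pointwise sign condition. Two concrete errors compound the problem. First, $\int_{\TT}\langle\BLHL{},g(\lambda)\rangle\,d\LEBT(\lambda)$ does \emph{not} vanish in general: on $\TT$ one has $\BLHL{}=\bar a\lambda+b+a\bar\lambda$ with $b\in\RR^n$ typically nonzero, so already for constant $g$ the integral equals $2\pi\langle b,g\rangle$. Second, your route to \eqref{th_postac_geod_h_bullet_rho_wieksze} via ``$\varrho(\lambda)\in S_D$ and $\BLHL{}\in W_D$'' gives the \emph{opposite} inequality $\langle\BLHL{},\varrho(\lambda)\rangle\leq 0$, by the very definition of $S_D$; that inequality is what the paper combines with \eqref{th_postac_geod_h_bullet_rho_wieksze} to prove the equality \eqref{th_postac_geod_h_bullet_rho_rowne_0}, so using it to ``derive'' \eqref{th_postac_geod_h_bullet_rho_wieksze} is circular, and your argument for \eqref{th_postac_geod_h_bullet_rho_rowne_0} then rests on the same two faulty steps.

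The remaining pieces are closer to the mark but still incomplete. Condition \eqref{th_postac_geod_rho_w_SD} does come from $\varphi(\DD)\subset D$, but the actual work (the paper's Lemma \ref{lem_fi_obraz_w_D_iff_mu_s_bullet_w_niedodatnia_i_gr_radialne}) is to pass to the weak-$*$ limit of $\RE\varphi(r\lambda)\,d\LEBT$ to get $\langle\mu_s,v\rangle\leq 0$ for each $v\in W_D$, extend to $\overline{W_D}$, and then use a countable dense subset of $W_D$ to obtain a single $\nu$-null exceptional set; you name this as the main obstacle but do not resolve it. Likewise the converse implication is not ``mostly reassembling'': showing that \eqref{th_postac_geod_g_in_pd_h}--\eqref{th_postac_geod_fi_0} force $\varphi(\DD)\subset D$ requires the Poisson-integral argument of that same lemma in the other direction. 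Your treatments of \eqref{th_postac_geod_nu_skupione_na} (openness of $w\mapsto\langle\varrho(\lambda),w\rangle$ on $\INT W_D$) and \eqref{th_postac_geod_blhl_in_WD} (continuity of $\lambda\mapsto\BLHL{}$ plus Observation \ref{obs_g_wlasnosci_pd}\eqref{obs_g_wlasnosci_pd_niepusty_impl_v_w_wd}) are essentially the paper's and are fine.
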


\noindent
Note that from \eqref{th_postac_geod_nu_skupione_na} and \eqref{th_postac_geod_blhl_in_WD} it follows that the measure $\nu$ is supported on the set $\lbrace\lambda\in\TT:\BLHL{}\in\partial W_D\rbrace$.

\begin{RM}\label{rm_separate_conditions_and_construction}
Theorem \ref{th_postac_geod} gives quite separate conditions for both parts $g\,d\LEBT$ and $\varrho\,d\nu$ of the decomposition of $\mu$, what makes it relatively not difficult to construct a measure which defines a complex geodesic for $D$.
The part $g\,d\LEBT$ must satisfy \eqref{th_postac_geod_g_in_pd_h}, while the part $\varrho\,d\nu$ must fulfill \eqref{th_postac_geod_h_bullet_rho_wieksze} and \eqref{th_postac_geod_rho_w_SD}.
Everything is connected 'only' by the map $h$.
To construct a measure $\mu$ which defines a complex geodesic for $D$ it suffices to choose a map $h\in\HH^n$, $h\not\equiv 0$ such that
\begin{equation}\label{eq_pd_blhl_niepusty}
P_D(\BLHL{})\neq\varnothing\text{ for }\LEBT\text{-a.e. }\lambda\in\TT 
\end{equation}
and next:
\begin{itemize}
\item take a Borel map $g$ with integrable components satisfying \eqref{th_postac_geod_g_in_pd_h} (note that it may happen that it is impossible, even if \eqref{eq_pd_blhl_niepusty} holds - see Example \ref{ex_obszar_1_nad_x}),
\item take a measure $\nu$ singular to $\LEBT$ and satisfying \eqref{th_postac_geod_nu_skupione_na},
\item take a Borel map $\varrho:\TT\to\partial\BB_n$ satisfying \eqref{th_postac_geod_h_bullet_rho_rowne_0}.
\end{itemize}
Then, if $\mu=g\,d\LEBT+\varrho\,d\nu$ and additionally $\frac{1}{2\pi}\mu(\TT)\in\RE D$ (i.e. $\RE\varphi(0)\in\RE D$), then $\mu$ is a boundary measure of a complex geodesic for the domain $D$.
\end{RM}

Before we proceed to applications of Theorem \ref{th_postac_geod}, let us make a remark on convex tubes with bounded base:

\begin{RM}\label{rm_th_postac_geod_if_re_D_ograniczone}
If $D\subset\CC^n$ is a convex tube domain containing no complex affine lines and $\RE D$ is bounded, then $W_D=\RR^n$ and $S_D=\lbrace 0\rbrace$, so from the condition \eqref{th_postac_geod_rho_w_SD} of Theorem \ref{th_postac_geod} it follows that $\varrho(\lambda)=0$ for $\nu$-a.e. $\lambda\in\TT$.
Hence $\nu$ is a null measure, because the image of $\varrho$ lies in $\partial\BB_n$.
Then also the condition \eqref{th_postac_geod_h_bullet_rho_wieksze} is automatically fulfilled.
Thus, a holomorphic map $\varphi$ with boundary measure $\mu$ is a complex geodesic for $D$ iff $$\mu=g\,d\LEBT$$ for some $g$, $h$ satisfying \eqref{th_postac_geod_g_in_pd_h} and \eqref{th_postac_geod_fi_0}.

In general case, i.e. for $D$ with unbounded base, the absolutely continuous part of $\mu$ is determined by Theorem \ref{th_postac_geod} \eqref{th_postac_geod_g_in_pd_h} in the same way.
But in that case we have to deal also with the singular part of $\mu$, what is a more interesting issue.
\end{RM}

\subsection{Convex tube domains with $\overline{W_D}=[0,\infty)^n$}\label{subsect_domains_Dn}

In this section we investigate the family $\mathcal{D}_n$ of all convex tube domains $D\subset\CC^n$ such that $$\overline{W_D}=[0,\infty)^n.$$
A convex tube domain $D$ belongs to $\mathcal{D}_n$ iff $e_1,\ldots,e_n\in\overline{W_D}$ and $$\RE D+(-\infty,0]^n=\RE D.$$
The base of such a domain $D$ contains no real affine lines and there holds the equality $$S_D=(-\infty,0]^n.$$
In Corollary \ref{crl_postac_geod_Dn} we describe all complex geodesics for a domain $D\in\mathcal{D}_n$ and we apply it in Section \ref{sect_reinhardt_domains} to describe extremal mappings in some classes of Reinhardt domains in $\CC^n$.

\begin{CRL}\label{crl_postac_geod_Dn}
Let $D\in\mathcal{D}_n$, $n\geq 2$, and let $\varphi\in\MMM^n$ be a holomorphic map with boundary measure $\mu$.
Consider the decomposition
\begin{equation*}
\mu = g\,d\LEBT+\varrho\,d\nu,
\end{equation*}
where $g=(g_1,\ldots,g_n):\TT\to\RR^n$ and $\varrho=(\varrho_1,\ldots,\varrho_n):\TT\to\partial\BB_n$ are Borel-measurable maps, $g_1,\ldots,g_n\in L^1(\TT,\LEBT)$ and $\nu$ is a positive, finite, Borel measure on $\TT$ singular to $\LEBT$.
Then
\begin{center}
$\varphi(\DD)\subset D$ and $\varphi$ is a complex geodesic for $D$
\end{center}
iff there exists a map $h\in\HH^n$, $h\not\equiv 0$ such that the following conditions hold:
\begin{enumerate}[(i)]
\item\label{crl_postac_geod_Dn_hj_w_Hplus} $h\in\HHP^n$,
\item\label{crl_postac_geod_Dn_g_in_pd_h} $g(\lambda)\in P_D(\BLHL{})$ for $\LEBT$-a.e. $\lambda\in\TT$.
\item\label{crl_postac_geod_Dn_rho_in_minus_infty_0} $\varrho(\lambda)\in(-\infty,0]^n$ for $\nu$-a.e. $\lambda\in\TT$,
\item\label{crl_postac_geod_Dn_fi_0} $\RE\varphi(0)\in\RE D$,
\item\label{crl_postac_geod_Dn_varrho_j_nu_if_hj_nie_zero} if $j\in\lbrace 1,\ldots,n\rbrace$ is such that $h_j\not\equiv 0$, then $$\varrho_j\,d\nu=\alpha_j\delta_{\lambda_j}$$ for some $\lambda_j\in\TT$ and $\alpha_j\in(-\infty,0]$ such that $\alpha_j h_j(\lambda_j)=0$.
\end{enumerate}
\end{CRL}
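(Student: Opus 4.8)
The plan is to deduce the corollary directly from Theorem \ref{th_postac_geod}, translating each abstract geometric condition there into the concrete form available when $\overline{W_D}=[0,\infty)^n$, $S_D=(-\infty,0]^n$, and $\INT W_D=(0,\infty)^n$. First I would record these identifications: since $\overline{W_D}=[0,\infty)^n$ we have, by Theorem \ref{th_postac_geod}\,\eqref{th_postac_geod_blhl_in_WD}, that $\BLHL{}\in[0,\infty)^n$ for every $\lambda\in\TT$, which is exactly the statement $h\in\HHP^n$, giving \eqref{crl_postac_geod_Dn_hj_w_Hplus}. Conditions \eqref{crl_postac_geod_Dn_g_in_pd_h} and \eqref{crl_postac_geod_Dn_fi_0} are literally Theorem \ref{th_postac_geod}\,\eqref{th_postac_geod_g_in_pd_h} and \eqref{th_postac_geod_fi_0}. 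Condition \eqref{crl_postac_geod_Dn_rho_in_minus_infty_0} is Theorem \ref{th_postac_geod}\,\eqref{th_postac_geod_rho_w_SD} once we note $S_D=(-\infty,0]^n$; and since $\varrho$ takes values in $\partial\BB_n$, on the support of $\nu$ each component $\varrho_j(\lambda)\le 0$, which we will use repeatedly.

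The only genuinely new content is \eqref{crl_postac_geod_Dn_varrho_j_nu_if_hj_nie_zero}, and this is where the work lies. For the forward implication, suppose $\varphi$ is a complex geodesic with $h$ as in Theorem \ref{th_postac_geod}. Fix $j$ with $h_j\not\equiv 0$. Write $h_j(\lambda)=\overline{a_j}\lambda^2+b_j\lambda+a_j$; since $h\in\HHP^n$, $h_j\in\HHP^1$, so by the recalled description of $\HHP^1$ we have $\BLHL{j}=c_j|\lambda-d_j|^2$ on $\TT$ with $c_j>0$ (as $h_j\not\equiv 0$) and $d_j\in\CDD$, whence $\BLHL{j}>0$ for all $\lambda\in\TT$ except possibly the single point $\lambda_j:=d_j$ (and only if $d_j\in\TT$). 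Now invoke Theorem \ref{th_postac_geod}\,\eqref{th_postac_geod_h_bullet_rho_rowne_0}: $\langle\BLHL{},\varrho(\lambda)\rangle=0$ for $\nu$-a.e.\ $\lambda$. Because every term $\BLHL{i}\varrho_i(\lambda)\le 0$ (each factor of one sign, opposite to the other), the sum vanishes only if each term vanishes; in particular $\BLHL{j}\varrho_j(\lambda)=0$ $\nu$-a.e. Since $\BLHL{j}=0$ on $\TT$ only at $\lambda_j$, we get $\varrho_j(\lambda)=0$ for $\nu$-a.e.\ $\lambda\ne\lambda_j$, i.e.\ $\varrho_j\,d\nu$ is concentrated at $\lambda_j$, so $\varrho_j\,d\nu=\alpha_j\delta_{\lambda_j}$ with $\alpha_j=\varrho_j(\lambda_j)\,\nu(\{\lambda_j\})\le 0$; and $\alpha_j h_j(\lambda_j)=0$ since either $\alpha_j=0$ or $h_j(\lambda_j)=0$ (the latter precisely when $d_j\in\TT$). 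The main obstacle is the bookkeeping when several $d_j$ coincide or when $\nu(\{\lambda_j\})=0$; both are handled by the observation that the conclusion is an equation of measures, so it is insensitive to the value of $\varrho_j$ off $\mathrm{supp}\,\nu$, and the stated freedom ``$\alpha_j\in(-\infty,0]$'' absorbs the degenerate cases.

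For the converse, assume $h\in\HHP^n$ and \eqref{crl_postac_geod_Dn_g_in_pd_h}--\eqref{crl_postac_geod_Dn_varrho_j_nu_if_hj_nie_zero}. I would verify the hypotheses \eqref{th_postac_geod_g_in_pd_h}--\eqref{th_postac_geod_fi_0} of Theorem \ref{th_postac_geod}. Items \eqref{th_postac_geod_g_in_pd_h}, \eqref{th_postac_geod_fi_0} are immediate, and \eqref{th_postac_geod_rho_w_SD} follows from \eqref{crl_postac_geod_Dn_rho_in_minus_infty_0} since $S_D=(-\infty,0]^n$. It remains to check \eqref{th_postac_geod_h_bullet_rho_wieksze}, i.e.\ $\langle\BLHL{},\varrho(\lambda)\rangle\ge 0$ $\nu$-a.e.; in fact I claim it is $0$. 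Split the sum over $j$ into those with $h_j\equiv 0$ (those terms are identically $0$) and those with $h_j\not\equiv 0$. For the latter, by \eqref{crl_postac_geod_Dn_varrho_j_nu_if_hj_nie_zero} the measure $\varrho_j\,d\nu$ equals $\alpha_j\delta_{\lambda_j}$ with $\alpha_j h_j(\lambda_j)=0$, so $\BLHL[j]{}\,\varrho_j\,d\nu=\overline{\lambda}h_j(\lambda)\alpha_j\delta_{\lambda_j}=\overline{\lambda_j}\,\alpha_j h_j(\lambda_j)\,\delta_{\lambda_j}=0$ as a measure. Summing, $\langle\BLHL{},\varrho\rangle\,d\nu=0$, giving \eqref{th_postac_geod_h_bullet_rho_wieksze} (with equality). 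Theorem \ref{th_postac_geod} then yields that $\varphi(\DD)\subset D$ and $\varphi$ is a complex geodesic, completing the proof.
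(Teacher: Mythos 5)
Your proposal is correct and follows essentially the same route as the paper: conditions (i)--(iv) are read off from Theorem \ref{th_postac_geod} (with (i) coming from $\BLHL{}\in\overline{W_D}=[0,\infty)^n$), condition (v) is derived from the orthogonality $\langle\BLHL{},\varrho(\lambda)\rangle=0$ $\nu$-a.e.\ together with the fact that a nonzero $h_j\in\HHP^1$ has at most one root on $\TT$, and the converse is settled by checking that $\langle\BLHL{},\varrho\rangle\,d\nu$ is the null measure. No gaps.
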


\noindent
Note that the condition \eqref{crl_postac_geod_Dn_rho_in_minus_infty_0} from the above Corollary means that the singular part of $\mu$, i.e. the measure $\varrho\,d\nu$, is just a $n$-tuple of negative measures.
Moreover, the condition \eqref{crl_postac_geod_Dn_varrho_j_nu_if_hj_nie_zero} means that if $h_j\not\equiv 0$, then the $j$-th component of the singular part of $\mu$ is of the form $\alpha_j\delta_{\lambda_j}$ with some appropriate $\alpha_j$ and $\lambda_j$.

\begin{proof}[Proof of Corollary \ref{crl_postac_geod_Dn}]
Assume that $\varphi(\DD)\subset D$ and $\varphi$ is a complex geodesic for $D$.
Let $h$ be as in Theorem \ref{th_postac_geod}.
The conditions \eqref{crl_postac_geod_Dn_hj_w_Hplus} - \eqref{crl_postac_geod_Dn_fi_0} follow directly from Theorem \ref{th_postac_geod}, so it remains to show the condition \eqref{crl_postac_geod_Dn_varrho_j_nu_if_hj_nie_zero}.

Set $\varrho=(\varrho_1,\ldots,\varrho_n)$.
The expression $\langle\BLHL{},\varrho(\lambda)\rangle$, which is by Theorem \ref{th_postac_geod} \eqref{th_postac_geod_h_bullet_rho_rowne_0} $\nu$-almost everywhere equal to zero, is a sum of $\nu$-almost everywhere non-positive terms $\BLHL{1}\varrho_1(\lambda),\ldots,\BLHL{n}\varrho_n(\lambda)$.
Therefore, all this terms are $\nu$-a.e. equal to zero.
If $j$ is such that $h_j\not\equiv 0$, then the function $h_j\in\HHP^1$ has at most one root on $\TT$ (counting without multiplicities).
Hence, up to a set of $\nu$ measure zero, $\varrho_j=\beta_j\chi_{\lbrace\lambda_j\rbrace}$ for some $\lambda_j\in\TT$ and $\beta_j\in(-\infty,0]$ such that $\beta_j h_j(\lambda_j)=0$.
This gives the condition \eqref{crl_postac_geod_Dn_varrho_j_nu_if_hj_nie_zero} with $\alpha_j:=\beta_j\nu(\lbrace\lambda_j\rbrace)$.

On the other hand, it follows from Theorem \ref{th_postac_geod} that if $h$ is such that the conditions \eqref{crl_postac_geod_Dn_hj_w_Hplus} - \eqref{crl_postac_geod_Dn_varrho_j_nu_if_hj_nie_zero} are satisfied, then $\varphi$ is a complex geodesic for $D$.
Indeed, is is clear that the conditions \eqref{th_postac_geod_g_in_pd_h}, \eqref{th_postac_geod_rho_w_SD} and \eqref{th_postac_geod_fi_0} from Theorem \ref{th_postac_geod} hold, so it suffices to show that \eqref{th_postac_geod_h_bullet_rho_wieksze} is also fulfilled.
From the assumption \eqref{crl_postac_geod_Dn_varrho_j_nu_if_hj_nie_zero} we conclude that if $j$ is such that $h_j\not\equiv 0$, then $$\BLHL{j}\varrho_j(\lambda)\,d\nu(\lambda)=\alpha_j\BLHL{j}\,d\delta_{\lambda_j}(\lambda)=\alpha_j\bar\lambda_j h_j(\lambda_j)\,d\delta_{\lambda_j}(\lambda)=0.$$
This implies that $\langle\BLHL{},\varrho(\lambda)\rangle\,d\nu(\lambda)$ is a null measure, what involves the condition \eqref{th_postac_geod_h_bullet_rho_wieksze}.
The proof is complete.
\end{proof}

\begin{RM}\label{rm_do_crl_postac_geod_Dn}
Under the assumptions of Corollary \ref{crl_postac_geod_Dn}, if $\varphi$ is a complex geodesic for $D$, $h$ is as in the corollary and $h_1\not\equiv 0,\ldots,h_n\not\equiv 0$, then it follows from \eqref{crl_postac_geod_Dn_varrho_j_nu_if_hj_nie_zero} that $$\varrho\,d\nu=(\alpha_1\delta_{\lambda_1},\ldots,\alpha_n\delta_{\lambda_n})$$ for some $\alpha_1,\ldots,\alpha_n\in(-\infty,0]$ and $\lambda_1,\ldots,\lambda_n\in\TT$ such that $$\alpha_1 h_1(\lambda_1)=\ldots=\alpha_n h_n(\lambda_n)=0.$$
Thus, the singular part of $\mu$ takes then a very special form.

In the opposite situation, i.e. when the set $A:=\lbrace j\in\lbrace 1,\ldots,n\rbrace:h_j\equiv 0\rbrace$ is non-empty, for every $j\in A$ the $j$-th component of the singular part of $\mu$ may be almost arbitrary.
More precisely, if $\omega_1,\ldots,\omega_n$ are finite, negative Borel measures on $\TT$, singular to $\LEBT$ and such that $\omega_j=\varrho_j\,d\nu$ for every $j\in\lbrace 1,\ldots,n\rbrace\setminus A$, then a holomorphic map $\psi$ with boundary measure $g\,d\LEBT+(\omega_1,\ldots,\omega_n)$ is a complex geodesic for $D$, provided that $\RE\psi(0)\in\RE D$.
This fact follows directly from Corollary \ref{crl_postac_geod_Dn}, because $\psi$ satisfies the conditions \eqref{crl_postac_geod_Dn_hj_w_Hplus} - \eqref{crl_postac_geod_Dn_varrho_j_nu_if_hj_nie_zero} with $h$.

Note that if the domain $\RE D$ is strictly convex (in the geometric sense), then there must hold $h_1\not\equiv 0,\ldots,h_n\not\equiv 0$.
Indeed, in view of the condition \eqref{crl_postac_geod_Dn_g_in_pd_h} from Corollary \ref{crl_postac_geod_Dn}, $\LEBT$-almost all sets $P_D(\BLHL{})$ are non-empty.
Our claim is a consequence of strict convexity of $D$ and of the following geometric property of domains from the family $\mathcal{D}_n$: if for a vector $v=(v_1,\ldots,v_n)\in\RR^n$ we have $P_D(v)\neq\varnothing$ and $v_j=0$ for some $j$, then $\partial\RE D$ contains a half-line of the form $p+(-\infty,0]\,e_j$ for any $p\in P_D(v)$.
\end{RM}

\begin{EX}\label{ex_obszar_z_rodziny_D2_z_cwiartka_kola}
Consider the following domain from the family $\mathcal{D}_2$, $$D:=\lbrace (x_1,x_2)\in\RR^2: (\max\lbrace x_1+1,0\rbrace)^2+(\max\lbrace x_2+1,0\rbrace)^2<1\rbrace+i\RR^2.$$
The base of $D$ is shown on Figure \ref{fig_obszar_z_D2_z_cwiartka_kola}.
\begin{figure}[htb]
\centering
\includegraphics[scale=0.4]{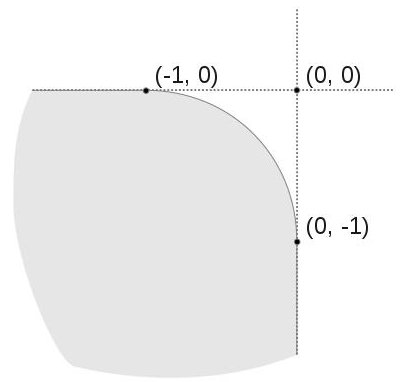}
\caption{\protect\label{fig_obszar_z_D2_z_cwiartka_kola}The base of $D$}
\end{figure}
We have
\begin{equation*}
P_D(v) = \left\lbrace\begin{array}{ll}
\left\lbrace \frac{v}{\|v\|}-(1,1)\right\rbrace, &\text{if } v\in(0,\infty)^2,\\
(-\infty,-1]\times\lbrace 0\rbrace, &\text{if } v\in\lbrace 0\rbrace\times(0,\infty),\\
\lbrace 0\rbrace\times(-\infty,-1], &\text{if } v\in(0,\infty)\times\lbrace 0\rbrace,\\
\varnothing, &\text{otherwise}.
\end{array}\right.
\end{equation*}

Take a complex geodesic $\varphi=(\varphi_1,\varphi_2):\DD\to D$ with boundary measure $\mu=(\mu_1,\mu_2)$ and let $g=(g_1,g_2)$, $\nu$, $\varrho=(\varrho_1,\varrho_2)$ and $h=(h_1,h_2)\in\HHP^2$ be as in Corollary \ref{crl_postac_geod_Dn}.

Assume that the functions $h_1$, $h_2$ are linearly independent.
By Corollary \ref{crl_postac_geod_Dn} \eqref{crl_postac_geod_Dn_g_in_pd_h}, for $\LEBT$-a.e. $\lambda\in\TT$ we have $$g(\lambda)\in P_D(\BLHL{})=\left\lbrace\frac{\BLHL{}}{\|\BLHL{}\|}-(1,1)\right\rbrace.$$
In view of Remark \ref{rm_do_crl_postac_geod_Dn}, we obtain
\begin{equation}\label{eq_eod2ck_mu_if_h1_h2_not_equiv_0}
\mu = \left(\frac{\BLHL{}}{\|\BLHL{}\|}-(1,1)\right)d\LEBT(\lambda)+(\alpha_1\delta_{\lambda_1},\alpha_2\delta_{\lambda_2}) 
\end{equation}
for some $\alpha_1,\alpha_2\in(-\infty,0]$ and $\lambda_1,\lambda_2\in\TT$ such that
\begin{equation}\label{eq_eod2ck_alpha_j_hj_delta_j_h1_h2_not_equiv_0}
\alpha_1 h_1(\lambda_1)=\alpha_2 h_2(\lambda_2)=0. 
\end{equation}
On the other hand, from Corollary \ref{crl_postac_geod_Dn} it follows that if a holomorphic map has boundary measure of the form \eqref{eq_eod2ck_mu_if_h1_h2_not_equiv_0} with some linearly independent $h_1,h_2\in\HHP^2$ and some $\alpha_1,\alpha_2\leq 0$, $\lambda_1,\lambda_2\in\TT$ satisfying \eqref{eq_eod2ck_alpha_j_hj_delta_j_h1_h2_not_equiv_0}, then it is a complex geodesic for $D$ (the condition \eqref{crl_postac_geod_Dn_fi_0} from Corollary \ref{crl_postac_geod_Dn} is then a consequence of linear independence of $h_1$, $h_2$).

Now consider the case when $h_1,h_2$ are linearly dependent, but $h_1,h_2\not\equiv 0$.
We have $h_2=\gamma h_1$ for some $\gamma>0$.
Since $P_D(\BLHL{})=P_D((1,\gamma))$ for $\LEBT$-a.e. $\lambda\in\TT$, the map $g$ is $\LEBT$-almost everywhere constant and equal to $\frac{(1,\gamma)}{\|(1,\gamma)\|}-(1,1)$.
Using Remark \ref{rm_do_crl_postac_geod_Dn} again we conclude that the singular part of $\mu$ is of the form $(\alpha_1\delta_{\lambda_1},\alpha_2\delta_{\lambda_2})$
for some $\alpha_1,\alpha_2\in(-\infty,0]$ and $\lambda_1,\lambda_2\in\TT$ such that $\alpha_1 h_1(\lambda_1)=\alpha_2\gamma h_1(\lambda_2)=0$.
But from Corollary \ref{crl_postac_geod_Dn} \eqref{crl_postac_geod_Dn_fi_0} it follows that $(\alpha_1,\alpha_2)\neq (0,0)$, so $h_1$, $h_2$ have a common root $\lambda_0\in\lbrace\lambda_1,\lambda_2\rbrace$.
Thus, 
\begin{equation}\label{eq_eod2ck_mu_if_h1_h2_dependent_but_not_equiv_0}
\mu = \left(\frac{(1,\gamma)}{\|(1,\gamma)\|}-(1,1)\right)d\LEBT+(\alpha_1,\alpha_2)\delta_{\lambda_0}.
\end{equation}
On the other hand, if a holomorphic map has boundary measure of the form \eqref{eq_eod2ck_mu_if_h1_h2_dependent_but_not_equiv_0} with some $\gamma>0$, $\lambda_0\in\TT$ and $\alpha_1,\alpha_2\leq 0$ such that $\alpha_1+\alpha_2<0$, then it is a complex geodesic for $D$.
It is a consequence of Corollary \ref{crl_postac_geod_Dn} applied to that map and to $h(\lambda)=(\lambda-\lambda_0)(1-\bar\lambda_0\lambda)\cdot(1,\gamma)$.

It remains to consider the situation when $h_1\equiv 0$ or $h_2\equiv 0$.
If $h_1\equiv 0$, then $g(\lambda)\in(-\infty,-1]\times\lbrace 0\rbrace$ for $\LEBT$-a.e. $\lambda\in\TT$, in view of \eqref{crl_postac_geod_Dn_g_in_pd_h}.
Moreover, from \eqref{crl_postac_geod_Dn_varrho_j_nu_if_hj_nie_zero} it follows that $\varrho_2\,d\nu=\alpha_2\delta_{\lambda_2}$ for some $\alpha_2\leq 0$ and $\lambda_2\in\TT$ such that $\alpha_2 h_2(\lambda_2)=0$, and \eqref{crl_postac_geod_Dn_fi_0} gives that $\alpha_2\neq 0$.
Therefore
\begin{equation}\label{eq_eod2ck_mu_if_h1_equiv_0}
\mu = (\mu_1,\alpha_2\delta_{\lambda_2}).
\end{equation}
On the other hand, one can check that if a holomorphic map has boundary measure given by \eqref{eq_eod2ck_mu_if_h1_equiv_0} with some $\alpha_2<0$, $\lambda_2\in\TT$ and some real Borel measure $\mu_1$ on $\TT$ such that $\mu_1\leq-\LEBT$, then it is a complex geodesic for the domain $D$.
If $h_2\equiv 0$, then arguing similarly as before we obtain that
\begin{equation}\label{eq_eod2ck_mu_if_h2_equiv_0}
\mu = (\alpha_1\delta_{\lambda_1},\mu_2)
\end{equation}
for some $\alpha_1<0$, $\lambda_1\in\TT$ and $\mu_2\leq-\LEBT$.
Any holomorphic map with boundary measure of this form is a complex geodesic for $D$.
\end{EX}

\begin{EX}\label{ex_obszar_1_nad_x}
Consider the domain $$D:=\lbrace (x_1,x_2)\in\RR^2:x_1,x_2<0, x_1 x_2>1\rbrace+i\RR^2.$$
It belongs to the family $\mathcal{D}_2$.
We have
\begin{equation}\label{eq_eo1x_PD}
P_D(v)=\left\lbrace\left(-\sqrt{\frac{v_2}{v_1}},-\sqrt{\frac{v_1}{v_2}}\right)\right\rbrace
\end{equation}
when $v=(v_1,v_2)\in(0,\infty)^2$ and $P_D(v)=\varnothing$ otherwise.

Take a complex geodesic $\varphi:\DD\to D$ with boundary measure $\mu$ and let $g$, $\nu$, $\varrho$ and $h=(h_1,h_2)\in\HHP^2$ be as in Corollary \ref{crl_postac_geod_Dn}.
Assume that $h_1$ and $h_2$ are linearly independent.
Then $h_j(\lambda)=c_j(\lambda-\lambda_j)(1-\bar \lambda_j \lambda)$ for some $c_1,c_2>0$, $\lambda_1,\lambda_2\in\CDD$ such that $\lambda_1\neq \lambda_2$.
By Corollary \ref{crl_postac_geod_Dn} \eqref{crl_postac_geod_Dn_g_in_pd_h}, for $\LEBT$-a.e. $\lambda\in\TT$ we have $$g(\lambda)=\left(-\sqrt{\frac{c_2}{c_1}}\,\frac{|\lambda-\lambda_2|}{|\lambda-\lambda_1|},-\sqrt{\frac{c_1}{c_2}}\,\frac{|\lambda-\lambda_1|}{|\lambda-\lambda_2|}\right).$$
Since both components of $g$ belong to $L^1(\TT,\LEBT)$, we have $\lambda_1,\lambda_2\in\DD$.
Moreover, $\varrho\,d\nu=0$, by Remark \ref{rm_do_crl_postac_geod_Dn}.
In summary,
\begin{equation}\label{eq_eo1x_mu_if_h12_niezal}
\mu=\left(c\,\frac{|\lambda-\lambda_2|}{|\lambda-\lambda_1|},\frac1c\,\frac{|\lambda-\lambda_1|}{|\lambda-\lambda_2|}\right)d\LEBT(\lambda),
\end{equation}
where $c=-(\frac{c_2}{c_1})^{\frac12}$.
Such a map $\varphi$ extends analytically on a neighbourhood of $\CDD$, because the map $g$ is real analytic on $\TT$.
It follows from Corollary \ref{crl_postac_geod_Dn} that any holomorphic map with boundary measure of the form \eqref{eq_eo1x_mu_if_h12_niezal} (with some $c>0$ and $\lambda_1,\lambda_2\in\DD$, $\lambda_1\neq\lambda_2$) is a complex geodesic for $D$.

If the functions $h_1$, $h_2$ are linearly dependent, then similarly as in Example \ref{ex_obszar_z_rodziny_D2_z_cwiartka_kola} we can show that $\mu$ is of the form
\begin{equation}\label{eq_eo1x_mu_if_h12_zal}
\mu=(-\gamma^{\frac12},-\gamma^{-\frac12})\,d\LEBT+(\alpha_1,\alpha_2)\delta_{\lambda_0}
\end{equation}
for some $\gamma>0$, $\lambda_0\in\TT$ and $\alpha_1,\alpha_2\leq 0$ such that $\alpha_1+\alpha_2<0$.
And again, any holomorphic map with boundary measure of the above form is a complex geodesic for $D$.

We see that in this example every complex geodesic which admits a map $h$ with linearly independent components can be extended analytically on a neighbourhood of the closed unit disc $\CDD$.
However, even in some 'similar' domains this property do not hold.
For example, let $$D':=\left\lbrace (x_1,x_2)\in\RR^2:x_1,x_2<0, x_2<-x_1^{-2}\right\rbrace+i\RR^2.$$
For $v=(v_1,v_2)\in(0,\infty)^2$ we have $$P_{D'}(v)=\left\lbrace\left(-\left(\frac{2v_2}{v_1}\right)^{\frac13}, -\left(\frac{v_1}{2v_2}\right)^{\frac23}\right)\right\rbrace.$$
Take $h(\lambda):=((\lambda+1)^2,\lambda)$ (it belongs to the family $\HHP^2$) and $g$ such that $g(\lambda)\in P_{D'}(\BLHL{})$ for $\LEBT$-a.e. $\lambda\in\TT$, i.e.
$$g(\lambda)=\left(-2^{\frac13}|\lambda+1|^{-\frac23}, -2^{-\frac23}|\lambda+1|^{\frac43}\right).$$
We see that both components of $g$ belong to $L^1(\TT)$.
From Corollary \ref{crl_postac_geod_Dn} it follows that if $\alpha_1\leq 0$, then the holomorphic map given by the boundary measure $\mu:=g\,d\LEBT+(\alpha_1\delta_{-1},0)$ is a complex geodesic for $D'$.
But this map does not extend analytically to a neighbourhood of $\CDD$.

In these examples we also see that it is possible that for some $h$ there is no map $g$ with components integrable with respect to $\LEBT$ and satisfying $g(\lambda)\in P_D(\BLHL{})$ for $\LEBT$-a.e. $\lambda\in\TT$, even if these sets are non-empty for $\LEBT$-a.e. $\lambda\in\TT$ (cf. Remark \ref{rm_separate_conditions_and_construction}).
\end{EX}

\begin{RM}
Although in examples presented above we considered only tube domains with base contained in $(-\infty,0)^n$, it is clear that in the family $\mathcal{D}_n$ there are domains with base not contained in any set of the form $a+(-\infty,0)^n$, $a\in\RR^n$.
An example of such a domain in $\CC^2$ is $\lbrace (x_1,x_2)\in\RR^2: x_2<-e^{x_1}\rbrace+i\RR^2$, where $W_D=\lbrace (0,0)\rbrace\cup [0,\infty)\times(0,\infty)\subsetneq\overline{W_D}$.
Applying Corollary \ref{crl_postac_geod_Dn} in the same way as previously, we can find formulas for boundary measures of all complex geodesics for such tube domains.
\end{RM}

\subsection{Domains in $\CC^2$}\label{subsect_domains_in_c2}

Let $D\subset\CC^2$ be a convex tube domain containing no complex affine lines.
From Observation \ref{obs_g_wlasnosci_pd} it follows that the set $\overline{W_D}$ is a closed, convex, infinite cone with vertex at the origin and with non-empty interior.
Thus, $\overline{W_D}$ is any of the whole $\RR^2$, a half-plane or a convex infinite angle, i.e. the set $$\lbrace (r\cos\theta,r\sin\theta):r\geq 0, \theta\in[\theta_1,\theta_2]\rbrace$$ for some $\theta_1<\theta_2<\theta_1+\pi$.

If $\overline{W_D}$ is the whole $\RR^2$, then $\RE D$ is bounded.
Tubes with bounded base were considered in Remark \ref{rm_th_postac_geod_if_re_D_ograniczone}.
If $\overline{W_D}$ is an angle, then $D$ is affinely equivalent to a convex tube domain $D'\subset\CC^2$ having $\overline{W_{D'}}=[0,\infty)^2$.
These domains are exactly those from the family $\mathcal{D}_2$ and they were considered in Subsection \ref{subsect_domains_Dn}.
If $\overline{W_D}$ is a half-plane, then we may assume that $\overline{W_D}=\RR\times(-\infty,0]$.
This case we consider now, in Corollary \ref{crl_wzor_geod_w_c2_if_WD_polplaszczyzna}.
If $D\subset\CC^2$ is a convex tube domain with $\overline{W_D}=\RR\times(-\infty,0]$, then $D$ contains no complex affine lines, there holds the equality $$S_D=\lbrace 0\rbrace\times[0,\infty)$$ and $D$ is of the form $$D=\lbrace (x_1,x_2)\in(a,b)\times\RR: x_2>f(x_1)\rbrace+i\RR^2$$ for some $-\infty\leq a<b\leq\infty$ and a convex function $f:(a,b)\to\RR$ such that:
\begin{itemize}
\item if $a=-\infty$, then $f'_-(x), f'_+(x)\to-\infty$, when $x\to-\infty$, and
\item if $b=\infty$, then $f'_-(x), f'_+(x)\to\infty$, when $x\to\infty$.
\end{itemize}
Here $f'_-$ and $f'_+$ denotes the one-sided derivatives of $f$.
Depending on $a$, $b$ and $f$, the set $\overline{W_D}\setminus W_D$ may be any of the empty set, a horizontal half-line starting at the origin or the horizontal line $\RR\times\lbrace 0\rbrace$.
In Corollary \ref{crl_wzor_geod_w_c2_if_WD_polplaszczyzna} all of this cases are treated the same, as there only the set $\overline{W_D}$ is important, not $W_D$ itself.

\begin{CRL}\label{crl_wzor_geod_w_c2_if_WD_polplaszczyzna}
Let $D\subset\CC^2$ be a convex tube domain such that $\overline{W_D}=\RR\times(-\infty,0]$.
Take a map $\varphi\in\MMM^2$ with boundary measure $\mu$ and consider the decomposition
\begin{equation*}
\mu = g\,d\LEBT+\varrho\,d\nu,
\end{equation*}
where $g=(g_1,g_2):\TT\to\RR^2$ and $\varrho:\TT\to\partial\BB_2$ are Borel-measurable maps, $g_1,g_2\in L^1(\TT,\LEBT)$ and $\nu$ is a positive, finite, Borel measure on $\TT$ singular to $\LEBT$.
Then
\begin{center}
$\varphi(\DD)\subset D$ and $\varphi$ is a complex geodesic for $D$
\end{center}
iff there exists a map $h\in\HH^2$, $h\not\equiv 0$ such that the following conditions hold:
\begin{enumerate}[(i)]
\item\label{crl_wzor_geod_w_c2_if_WD_polplaszczyzna_h2_w_minus_Hplus} $h_2\in-\HHP^1$,
\item\label{crl_wzor_geod_w_c2_if_WD_polplaszczyzna_g_in_pd_h} $g(\lambda)\in P_D(\BLHL{})$ for $\LEBT$-a.e. $\lambda\in\TT$,
\item\label{crl_wzor_geod_w_c2_if_WD_polplaszczyzna_rho_pionowe} $\varrho(\lambda)=e_2$ for $\nu$-a.e. $\lambda\in\TT$,
\item\label{crl_wzor_geod_w_c2_if_WD_polplaszczyzna_fi_0} $\RE\varphi(0)\in\RE D$,
\item\label{crl_wzor_geod_w_c2_if_WD_polplaszczyzna_varrho_nu_if_h2_nie_zero} if $h_2\not\equiv 0$, then $\nu=\alpha\delta_{\lambda_0}$ for some $\alpha\in [0,\infty)$ and $\lambda_0\in\TT$ such that $\alpha h_2(\lambda_0)=0$.
\end{enumerate}
\end{CRL}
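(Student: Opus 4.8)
The plan is to deduce Corollary~\ref{crl_wzor_geod_w_c2_if_WD_polplaszczyzna} from Theorem~\ref{th_postac_geod} by specializing to the half-plane case, in complete analogy with the proof of Corollary~\ref{crl_postac_geod_Dn}. First I would record the two geometric facts valid when $\overline{W_D}=\RR\times(-\infty,0]$: since $S_D$ depends only on $\overline{W_D}$ (the defining inequality $\langle y,v\rangle\le 0$ for all $v\in W_D$ extends by continuity to all $v\in\overline{W_D}$), testing it against $(\pm 1,0)$ and $(0,-1)$ yields $S_D=\lbrace 0\rbrace\times[0,\infty)$, and hence $\partial\BB_2\cap S_D=\lbrace e_2\rbrace$.

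For the forward implication I would take a map $h\in\HH^2$, $h\not\equiv 0$ as produced by Theorem~\ref{th_postac_geod}, so that its conditions \eqref{th_postac_geod_g_in_pd_h}--\eqref{th_postac_geod_fi_0} hold together with \eqref{th_postac_geod_h_bullet_rho_rowne_0}--\eqref{th_postac_geod_blhl_in_WD} from the second part of the theorem. Conditions \eqref{crl_wzor_geod_w_c2_if_WD_polplaszczyzna_g_in_pd_h} and \eqref{crl_wzor_geod_w_c2_if_WD_polplaszczyzna_fi_0} of the corollary are verbatim \eqref{th_postac_geod_g_in_pd_h} and \eqref{th_postac_geod_fi_0}. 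By \eqref{th_postac_geod_blhl_in_WD}, $\BLHL{}\in\overline{W_D}=\RR\times(-\infty,0]$ for every $\lambda\in\TT$, hence $\bar\lambda h_2(\lambda)\le 0$ on $\TT$, i.e. $h_2\in-\HHP^1$, which is \eqref{crl_wzor_geod_w_c2_if_WD_polplaszczyzna_h2_w_minus_Hplus}. From the $S_D$-membership contained in \eqref{th_postac_geod_h_bullet_rho_rowne_0} together with $\varrho(\TT)\subset\partial\BB_2$ we get $\varrho(\lambda)\in\partial\BB_2\cap S_D=\lbrace e_2\rbrace$ for $\nu$-a.e. $\lambda\in\TT$, which is \eqref{crl_wzor_geod_w_c2_if_WD_polplaszczyzna_rho_pionowe}. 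Finally, suppose $h_2\not\equiv 0$; then the orthogonality contained in \eqref{th_postac_geod_h_bullet_rho_rowne_0} together with $\varrho=e_2$ $\nu$-a.e. gives $\bar\lambda h_2(\lambda)=0$ for $\nu$-a.e. $\lambda\in\TT$. Since $-h_2\in\HHP^1$ has at most one zero on $\TT$, the measure $\nu$ is carried by that zero $\lambda_0$ (and if $h_2$ has no zero on $\TT$, then $\bar\lambda h_2(\lambda)<0$ everywhere, forcing $\nu=0$, in which case $\lambda_0$ may be chosen arbitrarily). Thus $\nu=\alpha\delta_{\lambda_0}$ with $\alpha:=\nu(\lbrace\lambda_0\rbrace)\ge 0$ and $\alpha h_2(\lambda_0)=0$, which is \eqref{crl_wzor_geod_w_c2_if_WD_polplaszczyzna_varrho_nu_if_h2_nie_zero}.

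For the converse I would take $h\in\HH^2$, $h\not\equiv 0$ satisfying \eqref{crl_wzor_geod_w_c2_if_WD_polplaszczyzna_h2_w_minus_Hplus}--\eqref{crl_wzor_geod_w_c2_if_WD_polplaszczyzna_varrho_nu_if_h2_nie_zero} and verify conditions \eqref{th_postac_geod_g_in_pd_h}--\eqref{th_postac_geod_fi_0} of Theorem~\ref{th_postac_geod}; once these hold, the theorem yields $\varphi(\DD)\subset D$ and that $\varphi$ is a complex geodesic for $D$. Conditions \eqref{th_postac_geod_g_in_pd_h} and \eqref{th_postac_geod_fi_0} are again \eqref{crl_wzor_geod_w_c2_if_WD_polplaszczyzna_g_in_pd_h} and \eqref{crl_wzor_geod_w_c2_if_WD_polplaszczyzna_fi_0}, while \eqref{th_postac_geod_rho_w_SD} holds because $\varrho=e_2\in S_D$ for $\nu$-a.e. $\lambda$. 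For \eqref{th_postac_geod_h_bullet_rho_wieksze} I would observe that $\langle\BLHL{},\varrho(\lambda)\rangle=\bar\lambda h_2(\lambda)$ for $\nu$-a.e. $\lambda$, which is $\le 0$ on $\TT$ since $h_2\in-\HHP^1$; if $h_2\equiv 0$ this is identically $0$, and if $h_2\not\equiv 0$ then $\nu=\alpha\delta_{\lambda_0}$ with $\alpha h_2(\lambda_0)=0$ forces $\bar\lambda h_2(\lambda)\,d\nu(\lambda)=\alpha\bar\lambda_0 h_2(\lambda_0)\,\delta_{\lambda_0}=0$, so in either case $\langle\BLHL{},\varrho(\lambda)\rangle=0\ge 0$ for $\nu$-a.e. $\lambda$. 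I do not expect a genuine obstacle: the only points requiring care are the case split according to whether $h_2$ vanishes identically or has a (necessarily unique) zero on $\TT$, and keeping track that the converse must be run using only conditions \eqref{th_postac_geod_g_in_pd_h}--\eqref{th_postac_geod_fi_0} of Theorem~\ref{th_postac_geod}, the remaining conditions \eqref{th_postac_geod_h_bullet_rho_rowne_0}--\eqref{th_postac_geod_blhl_in_WD} being consequences rather than hypotheses.
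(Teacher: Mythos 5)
Your proposal is correct and follows essentially the same route as the paper: deduce everything from Theorem \ref{th_postac_geod}, using the ``moreover'' conditions for the forward direction and verifying only conditions \eqref{th_postac_geod_g_in_pd_h}--\eqref{th_postac_geod_fi_0} for the converse. The only (immaterial) difference is that you obtain condition \eqref{crl_wzor_geod_w_c2_if_WD_polplaszczyzna_varrho_nu_if_h2_nie_zero} from the orthogonality condition \eqref{th_postac_geod_h_bullet_rho_rowne_0} of the theorem, whereas the paper uses the support condition \eqref{th_postac_geod_nu_skupione_na} together with the fact that $h_2\in-\HHP^1$ has at most one zero on $\TT$; both arguments are valid, and your write-up of the converse is in fact more detailed than the paper's one-line remark.
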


\noindent
The condition \eqref{crl_wzor_geod_w_c2_if_WD_polplaszczyzna_rho_pionowe} from the above Corollary means that $\varrho\,d\nu=(0,\nu)$.
In particular, $\nu$ is equal to the singular part of the second component of $\mu$.

\begin{proof}[Proof of Corollary \ref{crl_wzor_geod_w_c2_if_WD_polplaszczyzna}]
Assume that $\varphi(\DD)\subset D$ and $\varphi$ is a complex geodesic for $D$ and let $h$ be as in Theorem \ref{th_postac_geod}.
The conditions \eqref{crl_wzor_geod_w_c2_if_WD_polplaszczyzna_g_in_pd_h}, \eqref{crl_wzor_geod_w_c2_if_WD_polplaszczyzna_rho_pionowe} and \eqref{crl_wzor_geod_w_c2_if_WD_polplaszczyzna_fi_0} follow immediately from Theorem \ref{th_postac_geod}.
Since $\BLHL{}\in\overline{W_D}$ for every $\lambda\in\TT$, we have $h_2\in-\HHP^1$, what gives \eqref{crl_wzor_geod_w_c2_if_WD_polplaszczyzna_h2_w_minus_Hplus}.

If $h_2\not\equiv 0$, then $h_2$ has at most one root on $\TT$ (counting without multiplicities), so the set $\lbrace\lambda\in\TT:\BLHL{}\in\partial W_D\rbrace$ contains at most one element.
Hence the condition \eqref{crl_wzor_geod_w_c2_if_WD_polplaszczyzna_varrho_nu_if_h2_nie_zero} follows from Theorem \ref{th_postac_geod} \eqref{th_postac_geod_nu_skupione_na}.

It is a direct consequence of Theorem \ref{th_postac_geod} that if $h$ is such that the conditions \eqref{crl_wzor_geod_w_c2_if_WD_polplaszczyzna_h2_w_minus_Hplus} - \eqref{crl_wzor_geod_w_c2_if_WD_polplaszczyzna_varrho_nu_if_h2_nie_zero} are fulfilled, then $\varphi(\DD)\subset D$ and $\varphi$ is a complex geodesic for $D$.
\end{proof}

\begin{EX}\label{ex_polparabola}
Consider the domain $$D:=\lbrace (x_1,x_2)\in\RR^2:x_1>0, x_2>x_1^2\rbrace+i\RR^2.$$
This domain is of the type considered in Corollary \ref{crl_wzor_geod_w_c2_if_WD_polplaszczyzna}, because $$W_D=\lbrace (r\cos\theta,r\sin\theta):r\geq 0,\theta\in[-\pi,0)\rbrace.$$
For $v=(v_1,v_2)\in\RR^2$ we have
$$P_D(v)=\left\lbrace\begin{array}{ll}
\left\lbrace\left(-\frac{v_1}{2v_2}, \frac{v_1^2}{4v_2^2}\right)\right\rbrace, &\text{if }v\in(0,\infty)\times(-\infty,0),\\
\lbrace(0,0)\rbrace, &\text{if }v\in(-\infty,0]\times(-\infty,0),\\
\lbrace 0\rbrace\times[0,\infty), &\text{if }v\in(-\infty,0)\times\lbrace 0\rbrace,\\
\varnothing, &\text{otherwise}.
\end{array}\right.$$

Take a complex geodesic $\varphi:\DD\to D$ with boundary measure $\mu=(\mu_1,\mu_2)$ and let $g=(g_1,g_2)$, $\nu$, $\varrho=(\varrho_1,\varrho_2)$ and $h=(h_1,h_2)\in\HH^1\times(-\HHP^1)$ be as in Corollary \ref{crl_wzor_geod_w_c2_if_WD_polplaszczyzna}.
We have $\mu_1=g_1 d\LEBT$, so from the conditions \eqref{crl_wzor_geod_w_c2_if_WD_polplaszczyzna_g_in_pd_h} and \eqref{crl_wzor_geod_w_c2_if_WD_polplaszczyzna_fi_0} of Corollary \ref{crl_wzor_geod_w_c2_if_WD_polplaszczyzna} it follows that the sets $\lbrace\lambda\in\TT:g_1(\lambda)>0\rbrace$, $\lbrace\lambda\in\TT:\BLHL{1}>0\rbrace$ are of positive $\LEBT$ measure and $h_2\not\equiv 0$.
In particular, $h_1\in\HH^1\setminus(-\HHP^1)$.
The condition \eqref{crl_wzor_geod_w_c2_if_WD_polplaszczyzna_varrho_nu_if_h2_nie_zero} implies that $$\varrho\,d\nu=(0,\alpha\delta_{\lambda_0})$$ for some $\alpha\in[0,\infty)$ and $\lambda_0\in\TT$ such that $\alpha h_2(\lambda_0)=0$.
Moreover, as $h_2\not\equiv 0$, for $\LEBT$-a.e. $\lambda\in\lbrace\zeta\in\TT:\bar\zeta h_1(\zeta)\leq 0\rbrace$ there holds $g(\lambda)=(0,0)$.
This gives $$g(\lambda)=\left(-\frac{h_1(\lambda)}{2h_2(\lambda)}, \frac{h_1(\lambda)^2}{4h_2(\lambda)^2}\right)\chi_{\lbrace\zeta\in\TT:\bar\zeta h_1(\zeta)>0\rbrace}(\lambda)$$ for $\LEBT$-a.e. $\lambda\in\TT$.

If the functions $h_1$, $h_2$ are linearly independent, then $h_2$ has no roots on the set $\lbrace\zeta\in\TT:\bar\zeta h_1(\zeta)\geq 0\rbrace$, because $g_1,g_2\in L^1(\TT,\LEBT)$.
In that case
\begin{equation}\label{eq_expolp_mu_niezal}
\mu=\left(-\frac{h_1(\lambda)}{2h_2(\lambda)}, \frac{h_1(\lambda)^2}{4h_2(\lambda)^2}\right)\chi_{\lbrace\zeta\in\TT:\bar\zeta h_1(\zeta)>0\rbrace}(\lambda)d\LEBT(\lambda)+(0,\alpha\delta_{\lambda_0}).
\end{equation}
On the other hand, if a holomorphic map has boundary measure of the form \eqref{eq_expolp_mu_niezal} with some $h_1\in\HH^1\setminus(-\HHP^1)$, $h_2\in-\HHP^1$, $\alpha\in[0,\infty)$ and $\lambda_0\in\TT$ such that $h_1$, $h_2$ are linearly independent, $h_2$ has no roots on $\lbrace\zeta\in\TT:\bar\zeta h_1(\zeta)\geq 0\rbrace$ and $\alpha h_2(\lambda_0)=0$, then it is a complex geodesic for $D$.

If $h_1$, $h_2$ are linearly dependent, then applying Corollary \ref{crl_wzor_geod_w_c2_if_WD_polplaszczyzna} \eqref{crl_wzor_geod_w_c2_if_WD_polplaszczyzna_varrho_nu_if_h2_nie_zero} and arguing similarly as in previously considered examples we obtain that $$\mu=(\gamma,\gamma^2)\,d\LEBT+(0,\alpha\delta_{\lambda_0})$$ for some $\alpha<0$, $\lambda_0\in\TT$ and $\gamma>0$.
Any holomorphic map with boundary measure of the above form is a complex geodesic for $D$.
\end{EX}

\section{Proof of Theorem \ref{th_postac_geod} and further remarks}\label{sect_dowod_tw_postac_geod}

The aim of this section is to prove the main result of the paper, Theorem \ref{th_postac_geod}.
We begin with investigating the singular and absolutely continuous parts of the boundary measure of a complex geodesic in its Lebesgue-Radon-Nikodym decomposition with respect to $\LEBT$.
Next we show the proof of Theorem \ref{th_postac_geod} and we give some remarks related to it.

A starting point for our considerations is the following fact (see \cite[Theorem 1.2]{zajac}):

\begin{THE}\label{th_wkw_miara}
Let $D\subset\CC^n$ be a convex tube domain containing no complex affine lines and let $\varphi:\DD\to D$ be a holomorphic map with boundary measure $\mu$.
Then $\varphi$ is a complex geodesic for $D$ iff there exists a map $h\in\HH^n$, $h\not\equiv 0$, such that $$\langle\BLHL{},\RE z\,\DLEBT(\lambda)-d\mu(\lambda)\rangle\leq 0$$ for every $z\in D$.
\end{THE}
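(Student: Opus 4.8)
The statement is the Lempert--Royden--Wong characterization of complex geodesics in convex domains (\cite{lempert}, \cite[Chapter 8]{jarnickipflug}), recast for tubes in the language of boundary measures; the plan is to prove each implication by passing between the single measure inequality and the classical pointwise supporting condition. I first record the elementary but guiding reduction. Writing the Lebesgue--Radon--Nikodym decomposition $\mu=g\,\DLEBT+\mu_s$ (so that $\RE\varphi^*(\lambda)=g(\lambda)$ for $\LEBT$-a.e. $\lambda$), the tested vector measure splits as $\RE z\,\DLEBT-d\mu=(\RE z-g)\,\DLEBT-d\mu_s$. Since its absolutely continuous and singular parts have disjoint carriers, the real measure $\langle\BLHL{},\RE z\,\DLEBT-d\mu\rangle$ is nonpositive for a given $z$ exactly when both $\langle\BLHL{},\RE z-g(\lambda)\rangle\le 0$ holds $\LEBT$-a.e. and $\langle\BLHL{},d\mu_s\rangle\ge 0$. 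As $\RE z$ runs through $\RE D$ when $z$ runs through $D$, the condition of the theorem is thus equivalent to: for $\LEBT$-a.e. $\lambda$ the real vector $\BLHL{}$ is an outer supporting functional of $\RE D$ at the point $g(\lambda)$, together with the sign condition $\langle\BLHL{},d\mu_s\rangle\ge 0$ on the singular part.

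For the necessity, suppose $\varphi$ is a complex geodesic with left inverse $f\in\OO(D,\DD)$, $f\circ\varphi=\mathrm{id}_{\DD}$, and set $h:=\nabla f\circ\varphi$, the composition of $\varphi$ with the holomorphic gradient $(\partial f/\partial z_1,\dots,\partial f/\partial z_n)$. Differentiating $f\circ\varphi=\mathrm{id}_{\DD}$ gives $\sum_j h_j\varphi_j'\equiv 1$, so $h\not\equiv 0$. The analytic input I borrow from Lempert's theory is that $f$ is regular enough at the boundary that, for $\LEBT$-a.e. $\lambda\in\TT$, the vector $\BLHL{}=\bar\lambda h(\lambda)$ realizes the outer real normal of $D$ at $\varphi^*(\lambda)$, i.e.
\[
\RE\langle z-\varphi^*(\lambda),\BLHL{}\rangle\le 0\quad\text{for all }z\in D.
\]
Here the factor $\bar\lambda$ is exactly what the boundary gradient of $\RE[\bar\lambda f]$ at $\varphi^*(\lambda)$ produces. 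Now the tube structure enters twice. First, invariance of $D$ under $z\mapsto z+iv$ ($v\in\RR^n$) applied to the displayed inequality forces $\IM\BLHL{}=0$, i.e. $\BLHL{}\in\RR^n$ for a.e. $\lambda\in\TT$; the inequality then reads $\langle\RE z-g(\lambda),\BLHL{}\rangle\le 0$, the supporting condition at $g(\lambda)=\RE\varphi^*(\lambda)$. Second, the identity $\overline{h(\lambda)}=\bar\lambda^2 h(\lambda)$ on $\TT$ (equivalent to $\BLHL{}\in\RR^n$) is a Schwarz-reflection relation that continues $h$ across $\TT$, and together with its membership in a Hardy/Nevanlinna class forces $h$ to be entire of degree at most two, hence $h\in\HH^n$. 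Finally, the supporting condition is the absolutely continuous half of the measure inequality, while the singular half $\langle\BLHL{},d\mu_s\rangle\ge 0$ follows from the same boundary analysis applied along the recession directions of $\RE D$ that the tautness of $D$ attaches to $\mu_s$.

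For the sufficiency, assume $h\in\HH^n\setminus\{0\}$ satisfies the measure inequality, and read off from the reduction the two pointwise statements: $\BLHL{}$ supports $\RE D$ at $g(\lambda)=\RE\varphi^*(\lambda)$ for $\LEBT$-a.e. $\lambda$, and $\langle\BLHL{},d\mu_s\rangle\ge 0$. To build the left inverse I use the scalar holomorphic function $\Phi(\lambda):=\sum_j h_j(\lambda)\varphi_j(\lambda)$, for which $\RE[\bar\lambda\Phi^*(\lambda)]=\langle\BLHL{},g(\lambda)\rangle$ on $\TT$ because each $\BLHL{j}$ is real there, and, for each fixed $z\in D$, the holomorphic function $\lambda\mapsto\sum_j h_j(\lambda)(\varphi_j(\lambda)-z_j)$; the supporting inequalities turn into a one-sided bound on its boundary real part. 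By the Herglotz--Poisson representation together with the maximum principle this should exhibit a holomorphic $f\colon D\to\CDD$ with $f(D)\subset\DD$ and $f\circ\varphi=\mathrm{id}_{\DD}$, i.e. a left inverse; when $\RE D$ is bounded (so that $\mu$ is automatically absolutely continuous) this is precisely the sufficiency half of the convex-domain criterion of \cite[Chapter 8]{jarnickipflug}, and the task here is to run the same scheme for unbounded tubes.

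The main obstacle, in both directions, is the singular part $\mu_s$: it is present exactly when $\RE D$ is unbounded and has no counterpart in the classical bounded convex theory. I expect to control it by combining the weak-$*$ convergence $\RE\varphi(r\,\cdot)\,\DLEBT\to\mu$ (as $r\to 1^-$) with the geometry of the cones $W_D$ and $S_D$: the atoms of $\mu_s$ point in recession directions of $\RE D$, and the forced vanishing of $\BLHL{}$ there (coming from $\langle\BLHL{},d\mu_s\rangle\ge 0$ together with $\BLHL{}\in\overline{W_D}$) is what keeps the constructed $f$ bounded by one and, conversely, what the geodesic property imposes on $\mu_s$. The secondary technical point, pinning $h$ to the quadratic family $\HH^n$, is by contrast a short reflection-and-degree computation once the reality of $\BLHL{}$ on $\TT$ has been established.
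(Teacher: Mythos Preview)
This theorem is not proved in the present paper at all: it is quoted verbatim from \cite[Theorem~1.2]{zajac} and used as the starting point for everything that follows. So there is no ``paper's own proof'' to compare against here; any assessment of your sketch has to be on its own terms.

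Viewed that way, your outline has the right shape but two genuine gaps. In the necessity direction you set $h:=\nabla f\circ\varphi$ and then appeal to ``Lempert's theory'' for the boundary regularity needed to run Schwarz reflection and conclude $h\in\HH^n$. Lempert's regularity results are for \emph{strongly} convex \emph{bounded} domains; a convex tube $D=\Omega+i\RR^n$ with $\Omega$ possibly unbounded and with flat boundary pieces satisfies neither hypothesis, so this step does not go through as written. The actual argument in \cite{zajac} proceeds instead through the Royden--Wong duality for convex domains (as in \cite[Chapter~8]{jarnickipflug}) together with an exhaustion/approximation, which produces $h$ directly in the quadratic family $\HH^n$ without any boundary regularity of $f$. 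In the sufficiency direction your construction stops at the scalar function $\Phi(\lambda)=\sum_j h_j(\lambda)\varphi_j(\lambda)$ on $\DD$ and the assertion that ``Herglotz--Poisson together with the maximum principle should exhibit'' a left inverse $f:D\to\DD$; but $f$ must be a holomorphic function on the $n$-dimensional domain $D$, and nothing you have written explains how to build it from one-variable data. In the bounded convex case this is done by a dual extremal/Hahn--Banach argument (again \cite[Chapter~8]{jarnickipflug}); for unbounded tubes one has to reduce to that case or argue separately with the singular part, and your paragraph on $\mu_s$ is a description of what one hopes is true rather than a mechanism. The Lebesgue--Radon--Nikodym splitting you open with is exactly the reduction carried out in Lemma~\ref{lem_g_wkw_rozklad_lrn} of the present paper, but note that this lemma \emph{uses} Theorem~\ref{th_wkw_miara} rather than proving it.
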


\begin{LEM}\label{lem_g_wkw_rozklad_lrn}
Let $D\subset\CC^n$ be a convex tube domain containing no complex affine lines, $h\in\HH^n$, $h\not\equiv 0$ and let $\varphi:\DD\to D$ be a holomorphic map with boundary measure $\mu$. 
Consider $$\mu=\RE\varphi^*\,d\LEBT+\mu_s,$$ the Lebesgue-Radon-Nikodym decomposition of $\mu$ with respect to $\LEBT$.
Then
\begin{equation}\label{eq_g_lem_lrn_m}
\langle\BLHL{},\RE z\,\DLEBT(\lambda)-d\mu(\lambda)\rangle\leq 0\text{ for each }z\in D
\end{equation}
iff the following two conditions hold:
\begin{enumerate}[(i)]
\item\label{lem_g_wkw_rozklad_lrn_1} $\RE\varphi^*(\lambda)\in P_D(\BLHL{})$ for $\LEBT$-a.e. $\lambda\in\TT$,
\item\label{lem_g_wkw_rozklad_lrn_2} $\langle\BLHL{}, d\mu_s(\lambda)\rangle\geq 0$.
\end{enumerate}
\end{LEM}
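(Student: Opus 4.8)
The plan is to evaluate the measure-inequality \eqref{eq_g_lem_lrn_m} separately on the absolutely continuous and on the singular part (with respect to $\LEBT$) of the measure occurring in it. Write $v(\lambda):=\BLHL{}$, a bounded, real-analytic, $\RR^n$-valued function on $\TT$, and for $z\in D$ denote by $\Lambda_z$ the real Borel measure $\langle v(\lambda),\RE z\,\DLEBT(\lambda)-d\mu(\lambda)\rangle$. Substituting $\mu=\RE\varphi^*\,d\LEBT+\mu_s$ yields
\begin{equation*}
\Lambda_z=\langle v(\lambda),\RE z-\RE\varphi^*(\lambda)\rangle\,\DLEBT(\lambda)-\langle v(\lambda),d\mu_s(\lambda)\rangle .
\end{equation*}
Since $v$ is bounded and $\mu_s$ is singular to $\LEBT$, the second summand is singular to $\LEBT$ and the first is absolutely continuous; thus these two summands are exactly the parts of $\Lambda_z$ in its Lebesgue-Radon-Nikodym decomposition with respect to $\LEBT$. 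A negative measure has negative absolutely continuous part and negative singular part, and conversely; hence \eqref{eq_g_lem_lrn_m}, i.e.\ $\Lambda_z\leq 0$ for every $z\in D$, is equivalent to the conjunction of (a) $\langle v(\lambda),\RE z-\RE\varphi^*(\lambda)\rangle\leq 0$ for $\LEBT$-a.e.\ $\lambda\in\TT$, for every $z\in D$, and (b) $\langle v(\lambda),d\mu_s(\lambda)\rangle\geq 0$; note that (b) is exactly condition (ii) of the lemma and does not involve $z$.

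It remains to identify (a) with condition (i). First, the set $\lbrace\lambda\in\TT:v(\lambda)=0\rbrace=\lbrace\lambda\in\TT:h(\lambda)=0\rbrace$ is finite, hence $\LEBT$-null: choosing a component $h_j\not\equiv 0$ of $h$, which by $h\in\HH^n$ has the form $\bar a_j\lambda^2+b_j\lambda+a_j$ with $a_j\in\CC$, $b_j\in\RR$, we see $h_j$ has at most two zeros on $\TT$. Next I would interchange the quantifiers in (a): take a countable set $\lbrace x_k\rbrace_k$ dense in the open set $\RE D$, apply (a) with $\RE z=x_k$ for each $k$, take the union of the corresponding $\LEBT$-null exceptional sets, and use continuity of $x\mapsto\langle v(\lambda),x-\RE\varphi^*(\lambda)\rangle$ to obtain that (a) is equivalent to: for $\LEBT$-a.e.\ $\lambda$, $\langle v(\lambda),x-\RE\varphi^*(\lambda)\rangle\leq 0$ for all $x\in\RE D$. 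Recall that $\RE\varphi^*(\lambda)\in\overline{\RE D}$ for $\LEBT$-a.e.\ $\lambda$. For such a $\lambda$ with $v(\lambda)\neq 0$ the affine functional $x\mapsto\langle v(\lambda),x-\RE\varphi^*(\lambda)\rangle$ is non-constant, hence open, so being $\leq 0$ on the open set $\RE D$ it is in fact $<0$ there; together with $\RE\varphi^*(\lambda)\in\overline{\RE D}$ this says precisely $\RE\varphi^*(\lambda)\in P_D(v(\lambda))$, i.e.\ condition (i). This closes the chain of equivalences. The opposite implication, that conditions (i) and (ii) imply \eqref{eq_g_lem_lrn_m}, can also be seen directly: (i) forces each $\Lambda_z$ to have negative absolutely continuous part by the very definition of $P_D(\BLHL{})$, and (ii) forces its singular part to be negative.

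The background facts I rely on --- that $v$ is bounded, that $\RE\varphi^*\in L^1(\TT,\LEBT)$ with $\RE\varphi^*(\lambda)\in\overline{\RE D}$ a.e., and that $\langle v,d\mu_s\rangle$ is genuinely singular to $\LEBT$ --- are all recorded in the preliminaries. The points that call for care are three: the decoupling of the two conditions, which rests on the Lebesgue-Radon-Nikodym decomposition respecting the sign of a measure; the interchange of ``for every $z\in D$'' and ``for $\LEBT$-a.e.\ $\lambda$'', which uses separability of $\RE D$; and the passage from a non-strict to a strict inequality on the open set $\RE D$, which uses $v(\lambda)\neq 0$ a.e.\ and $\RE\varphi^*(\lambda)\in\overline{\RE D}$. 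I expect the quantifier interchange to be the step most worth writing out carefully, though none of these is deep.
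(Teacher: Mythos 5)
Your proof is correct and follows essentially the same route as the paper: both arguments split the measure $\langle\BLHL{},\RE z\,\DLEBT(\lambda)-d\mu(\lambda)\rangle$ into its absolutely continuous and singular parts with respect to $\LEBT$ (the paper does this via a carrier set $S$ of $\mu_s$) and use the fact that a real measure is negative iff both parts are. The only deviation is that the paper cites \cite[Lemma 3.7]{zajac} for the equivalence of the absolutely continuous condition with condition (i), whereas you prove that equivalence directly (countable dense subset of $\RE D$ to interchange quantifiers, finiteness of the zero set of $\bar\lambda h(\lambda)$, and openness of the nonconstant affine functional to upgrade $\leq 0$ to $<0$ on the open set $\RE D$) --- a sound, self-contained substitute.
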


\begin{proof}
Let $\mu_s=(\mu_{s,1},\ldots,\mu_{s,n})$.
There exists a Borel subset $S\subset\TT$ such that $$\LEBT(S)=0,\;|\mu_{s,1}|(\TT\setminus S)=\ldots=|\mu_{s,n}|(\TT\setminus S)=0.$$
There hold the equalities
\begin{equation}\label{eq_g_lwkwrozklad_lrn_sts}
\chi_S\,d\LEBT=0,\; \chi_{\TT\setminus S}\,d\LEBT=\LEBT,\; \chi_S\,d\mu=\mu_s,\; \chi_{\TT\setminus S}\,d\mu=\RE\varphi^*\,d\LEBT.
\end{equation}
For $z\in D$ set
\begin{equation*}
\nu_z := \langle\BLHL{},\RE z\,\DLEBT(\lambda)-d\mu(\lambda)\rangle.
\end{equation*}
We have $\nu_z=\chi_{\TT\setminus S}\,d\nu_z+\chi_S\,d\nu_z$ and from \eqref{eq_g_lwkwrozklad_lrn_sts} it follows that
\begin{equation}\label{eq_g_lwkwrozklad_lrn_chi_ts_nu_z}
\chi_{\TT\setminus S}\,d\nu_z=\langle\BLHL{},\RE z-\RE\varphi^*(\lambda)\rangle\,d\LEBT(\lambda)
\end{equation}
and
\begin{equation}\label{eq_g_lwkwrozklad_lrn_chi_s_nu_z}
\chi_S\,d\nu_z=-\langle\BLHL{},d\mu_s(\lambda)\rangle.
\end{equation}

If the condition \eqref{eq_g_lem_lrn_m} holds, i.e. $\nu_z\leq 0$ for every $z\in D$, then \eqref{lem_g_wkw_rozklad_lrn_1} follows from \cite[Lemma 3.7]{zajac} and \eqref{lem_g_wkw_rozklad_lrn_2} follows from the equality \eqref{eq_g_lwkwrozklad_lrn_chi_s_nu_z}.
On the other hand, if there hold both \eqref{lem_g_wkw_rozklad_lrn_1} and \eqref{lem_g_wkw_rozklad_lrn_2}, then \eqref{eq_g_lwkwrozklad_lrn_chi_ts_nu_z} and \eqref{eq_g_lwkwrozklad_lrn_chi_s_nu_z} gives that for each $z\in D$ the measures $\chi_{\TT\setminus S}\,d\nu_z$ and $\chi_S\,d\nu_z$ are negative and hence $\nu_z$ is so.
\end{proof}

\begin{LEM}\label{lem_fi_obraz_w_D_iff_mu_s_bullet_w_niedodatnia_i_gr_radialne}
Let $D\subset\CC^n$ be a convex tube domain containing no complex affine lines, let $\varphi\in\MMM^n$ be a holomorphic map with boundary measure $\mu$ and let $$\mu=\RE\varphi^*\,d\LEBT+\mu_s$$ be the Lebesgue-Radon-Nikodym decomposition of $\mu$ with respect to $\LEBT$.
Then $\varphi(\DD)\subset\overline{D}$ iff the following two conditions hold:
\begin{enumerate}[(i)]
\item\label{lem_fi_obraz_w_D_iff_mu_s_bullet_w_niedodatnia_i_gr_radialne_re_fi_gw} $\RE\varphi^*(\lambda)\in\overline{\RE D}$ for $\LEBT$-a.e. $\lambda\in\TT$,
\item\label{lem_fi_obraz_w_D_iff_mu_s_bullet_w_niedodatnia_i_gr_radialne_mu_s_w} $\langle\mu_s,w\rangle\leq 0$ for every $w\in\overline{W_D}$.
\end{enumerate}
\end{LEM}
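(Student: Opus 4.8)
The plan is to characterize the inclusion $\varphi(\DD)\subset\overline D$ via the supporting-hyperplane description of the convex base $\RE D$. Since $D=\RE D+i\RR^n$ is a tube, $\varphi(\DD)\subset\overline D$ is equivalent to $\RE\varphi(\lambda)\in\overline{\RE D}$ for every $\lambda\in\DD$. Because $\RE D$ is convex, $\overline{\RE D}=\bigcap_{v\in W_D}\{x\in\RR^n:\langle x,v\rangle\le s_D(v)\}$, where $s_D(v):=\sup_{x\in\RE D}\langle x,v\rangle<\infty$ exactly for $v\in W_D$; equivalently $x\in\overline{\RE D}$ iff $\langle x,v\rangle\le s_D(v)$ for all $v\in\overline{W_D}$ (with $s_D$ extended lower-semicontinuously, finite on $\overline{W_D}$ by convexity of the cone). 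So the statement $\varphi(\DD)\subset\overline D$ becomes: for every $v\in\overline{W_D}$ and every $\lambda\in\DD$, $\langle\RE\varphi(\lambda),v\rangle\le s_D(v)$.

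The key step is to rewrite $\langle\RE\varphi(\lambda),v\rangle$ using the Poisson representation \eqref{eq_poisson_formula_re_f_mu} applied coordinatewise: for $\lambda\in\DD$,
\begin{equation*}
\langle\RE\varphi(\lambda),v\rangle=\frac{1}{2\pi}\int_{\TT}\frac{1-|\lambda|^2}{|\zeta-\lambda|^2}\,d\langle\mu,v\rangle(\zeta),
\end{equation*}
so $\langle\RE\varphi(\cdot),v\rangle$ is the Poisson integral of the real measure $\langle\mu,v\rangle$. A Poisson integral $P[\sigma]$ of a real measure $\sigma$ on $\TT$ satisfies $P[\sigma]\le c$ on $\DD$ if and only if $\sigma\le c\,\LEBT$ (for the "only if" direction take the weak-$*$ limit of $P[\sigma](r\,\cdot)\,d\LEBT\to\sigma$ recalled in the preliminaries, and test against nonnegative continuous functions; the "if" direction is immediate since the Poisson kernel is nonnegative with total mass $2\pi$). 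Applying this with $\sigma=\langle\mu,v\rangle$ and $c=s_D(v)$, the inclusion $\varphi(\DD)\subset\overline D$ is equivalent to
\begin{equation*}
\langle\mu,v\rangle\le s_D(v)\,\LEBT\qquad\text{for every }v\in\overline{W_D}.
\end{equation*}

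Now I would split $\mu=\RE\varphi^*\,d\LEBT+\mu_s$ and take the Lebesgue–Radon–Nikodym decomposition of both sides of the inequality $\langle\mu,v\rangle\le s_D(v)\,\LEBT$ with respect to $\LEBT$. The absolutely continuous part gives $\langle\RE\varphi^*(\lambda),v\rangle\le s_D(v)$ for $\LEBT$-a.e. $\lambda$, and the singular part gives $\langle\mu_s,v\rangle\le0$ (the singular part of the right-hand side being the null measure). Handling the dependence on $v$ requires a countable-exhaustion argument: pick a countable dense subset $\{v_k\}$ of $\overline{W_D}$ and a common $\LEBT$-null set off which all the inequalities $\langle\RE\varphi^*(\lambda),v_k\rangle\le s_D(v_k)$ hold; by continuity of $x\mapsto\langle x,v\rangle$ and lower semicontinuity of $s_D$ on the cone $\overline{W_D}$ this upgrades to all $v\in\overline{W_D}$ simultaneously, which is exactly $\RE\varphi^*(\lambda)\in\overline{\RE D}$ a.e., i.e. condition \eqref{lem_fi_obraz_w_D_iff_mu_s_bullet_w_niedodatnia_i_gr_radialne_re_fi_gw}; similarly the countably many singular inequalities $\langle\mu_s,v_k\rangle\le0$ give $\langle\mu_s,v\rangle\le0$ for all $v\in\overline{W_D}$ by density together with $|\mu_s|$-finiteness, yielding \eqref{lem_fi_obraz_w_D_iff_mu_s_bullet_w_niedodatnia_i_gr_radialne_mu_s_w}. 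Conversely, if \eqref{lem_fi_obraz_w_D_iff_mu_s_bullet_w_niedodatnia_i_gr_radialne_re_fi_gw} and \eqref{lem_fi_obraz_w_D_iff_mu_s_bullet_w_niedodatnia_i_gr_radialne_mu_s_w} hold, then for each $v\in\overline{W_D}$ we recover $\langle\mu,v\rangle=\langle\RE\varphi^*,v\rangle\,d\LEBT+\langle\mu_s,v\rangle\le s_D(v)\,\LEBT$, and running the Poisson equivalence backwards gives $\varphi(\DD)\subset\overline D$.

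The main obstacle I anticipate is the careful bookkeeping needed to pass from "for each fixed $v$, a.e. $\lambda$" to "for a.e. $\lambda$, for all $v$": one must exploit a countable dense set in $\overline{W_D}$ and the semicontinuity of the support function $s_D$, and make sure the exceptional $\LEBT$-null set can be chosen uniformly. The Poisson-boundedness characterization of $P[\sigma]\le c$ is standard (it is essentially the Herglotz/Riesz–Herglotz circle of ideas already invoked in the preliminaries), but its "only if" half must be stated cleanly using the weak-$*$ convergence $\RE\varphi(r\,\cdot)\,d\LEBT\to\mu$ recalled earlier, tested only against nonnegative continuous functions so as to preserve the inequality.
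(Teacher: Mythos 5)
Your overall strategy (the Poisson representation, the equivalence $P[\sigma]\le c$ on $\DD$ iff $\sigma\le c\,\LEBT$ via the weak-$*$ convergence recalled in the preliminaries, and splitting the resulting measure inequality into absolutely continuous and singular parts) is essentially the route the paper takes for condition (ii) and for the converse implication, and those parts of your argument are sound. The gap is in your derivation of condition (i). First, a factual error: $s_D$ is \emph{not} finite on $\overline{W_D}$; by definition $W_D$ is exactly the set where $s_D<\infty$, and $\overline{W_D}\setminus W_D$ can be nonempty (the paper itself exhibits $\RE D=\lbrace x_2<-e^{x_1}\rbrace$ with $W_D\subsetneq\overline{W_D}$), so for such $v$ the inequality $\langle\mu,v\rangle\le s_D(v)\,\LEBT$ is vacuous and your countable dense family must really live inside $W_D$. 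More seriously, the ``upgrade'' from the countable family $\lbrace v_k\rbrace$ to all $v$ does not follow from lower semicontinuity of $s_D$: from $\langle x,v_k\rangle\le s_D(v_k)$ and $v_k\to v$ you only get $\langle x,v\rangle\le\liminf_k s_D(v_k)$, while lsc gives $s_D(v)\le\liminf_k s_D(v_k)$ --- both quantities sit below the same $\liminf$, which says nothing about $\langle x,v\rangle$ versus $s_D(v)$. What you would need is upper semicontinuity along the approximating sequence, and support functions genuinely fail to be usc at boundary points of their effective domain. Equivalently, you are implicitly asserting that $\overline{\RE D}$ is cut out by the half-spaces with normals in a countable dense set, and that claim requires a separate argument.

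This is fixable in two ways. The cheap fix --- and the one the paper uses --- is not to go through the support function at all for (i): since $\varphi\in\MMM^n$ and $\varphi(\DD)\subset\overline D$, the radial limits $\varphi^*(\lambda)$ exist for $\LEBT$-a.e.\ $\lambda$ and lie in $\overline D$, and the preliminaries record that the density of the absolutely continuous part of $\mu$ equals $\RE\varphi^*$ a.e.; condition (i) is then immediate. If you want to keep your route, take the countable dense subset inside $\INT W_D$ (nonempty by Observation \ref{obs_g_wlasnosci_pd}), where the convex function $s_D$ is finite and hence continuous, and then prove separately that $\overline{\RE D}=\bigcap_{v\in\INT W_D}\lbrace x:\langle x,v\rangle\le s_D(v)\rbrace$; this follows from strict separation together with convexity of $s_D$ (perturb a separating normal $v_0$ toward an interior point $w$ of $W_D$ and use $s_D((1-t)v_0+tw)\le(1-t)s_D(v_0)+t\,s_D(w)$). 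The remainder of your proof --- the derivation of (ii) for $v\in W_D$, its extension to $\overline{W_D}$ by linearity of $v\mapsto\langle\mu_s,v\rangle$, and the converse via the Poisson integral of $\langle\mu,v\rangle$ --- matches the paper's argument.
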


\begin{proof}
Again, let $S\subset\TT$ be such that there holds \eqref{eq_g_lwkwrozklad_lrn_sts}.
Assume that $\varphi(\DD)\subset\overline{D}$.
The first condition is clear.
If $v\in W_D$, then for some constant $C\in\RR$ there is $\langle x,v\rangle<C$ for every $x\in\RE D$.
In particular, $\langle\RE\varphi(\lambda),v\rangle<C$ for $\lambda\in\DD$, what gives a similar inequality for measures:
$$\langle\RE\varphi(r\lambda)\,d\LEBT(\lambda),v\rangle\leq C\,d\LEBT,\;r\in(0,1).$$
Taking limit for $r$ tending to $1$ we get $$\langle d\mu,v\rangle\leq C\,d\LEBT.$$
Hence $$\langle \chi_S\,d\mu,v\rangle\leq C\chi_S\,d\LEBT,$$ what together with \eqref{eq_g_lwkwrozklad_lrn_sts} gives $$\langle \mu_s,v\rangle\leq 0.$$
If $w\in\overline{W_D}$, then there exists a sequence $(v_m)_m\subset W_D$ tending to $w$.
The measure $\langle\mu_s,w\rangle$ is a weak-* limit of the sequence $\langle\mu_s,v_m\rangle$ of negative measures, so it is also negative.

Now assume that both \eqref{lem_fi_obraz_w_D_iff_mu_s_bullet_w_niedodatnia_i_gr_radialne_re_fi_gw} and \eqref{lem_fi_obraz_w_D_iff_mu_s_bullet_w_niedodatnia_i_gr_radialne_mu_s_w} hold.
It suffices to show that if $p\in\RR^n\setminus\overline{\RE D}$ and $v\in\RR^n$ are such that $\langle x-p,v\rangle\leq 0$ for every $x\in\overline{\RE D}$, then $\langle \RE\varphi(\lambda)-p,v\rangle\leq 0$ for every $\lambda\in\DD$.
Fix $p$, $v$ and $\lambda$.
It is clear that $v\in W_D$ and $\langle\RE\varphi^*(\zeta)-p,v\rangle\leq 0$ for $\LEBT$-a.e. $\zeta\in\TT$.
We have
\begin{eqnarray*}
\langle\RE\varphi(\lambda)-p,v\rangle &=& \frac{1}{2\pi}\int_{\TT}\frac{1-|\lambda|^2}{|\zeta-\lambda|^2}\langle\RE\varphi^*(\zeta)-p,v\rangle d\LEBT(\zeta)\\
 &+& \frac{1}{2\pi}\int_{\TT}\frac{1-|\lambda|^2}{|\zeta-\lambda|^2}\,d(\langle\mu_s(\zeta),v\rangle),
\end{eqnarray*}
so $\langle\RE\varphi(\lambda)-p,v\rangle\leq 0$ and the proof is complete.
\end{proof}

\begin{proof}[Proof of Theorem \ref{th_postac_geod}]
We have $$\RE\varphi^*(\lambda)=g(\lambda)\text{ for }\LEBT\text{-a.e. }\lambda\in\TT$$ and $$\mu_s=\varrho\,d\nu,$$ where $\mu_s=(\mu_{s,1},\ldots,\mu_{s,n})$ is the singular part of $\mu$ in its Lebesgue-Radon-Nikodym decomposition with respect to $\LEBT$.
The condition \eqref{lem_g_wkw_rozklad_lrn_2} from Lemma \ref{lem_g_wkw_rozklad_lrn} may be written as
\begin{equation}\label{eq_thpg_warunek_z_lematu_lrn_oznacza}
\langle\BLHL{},\varrho(\lambda)\rangle\,d\nu(\lambda)\geq 0 
\end{equation}
and the condition \eqref{lem_fi_obraz_w_D_iff_mu_s_bullet_w_niedodatnia_i_gr_radialne_mu_s_w} from Lemma \ref{lem_fi_obraz_w_D_iff_mu_s_bullet_w_niedodatnia_i_gr_radialne} can be written as
\begin{equation}\label{eq_thpg_warunek_z_lematu_o_obrazie_oznacza}
\langle\varrho(\lambda),w\rangle\,d\nu(\lambda)\leq 0\text{ for every }w\in\overline{W_D}.
\end{equation}
Now it is clear that if for some map $h\in\HH^n$, $h\not\equiv 0$ the conditions \eqref{th_postac_geod_g_in_pd_h} - \eqref{th_postac_geod_fi_0} from Theorem \ref{th_postac_geod} hold, then from Lemmas \ref{lem_g_wkw_rozklad_lrn} and \ref{lem_fi_obraz_w_D_iff_mu_s_bullet_w_niedodatnia_i_gr_radialne} it follows that $\varphi(\DD)\subset D$ and $\varphi$ is a complex geodesic for $D$.
It remains to prove the opposite implication.

Assume that $\varphi(\DD)\subset D$ and $\varphi$ is a complex geodesic for $D$.
Take $h\in\HH^n$ as in Theorem \ref{th_wkw_miara}.
The condition \eqref{th_postac_geod_fi_0} is clear and the conditions \eqref{th_postac_geod_g_in_pd_h}, \eqref{th_postac_geod_h_bullet_rho_wieksze} of Theorem \ref{th_postac_geod} follow directly from \eqref{eq_thpg_warunek_z_lematu_lrn_oznacza} and Lemma \ref{lem_g_wkw_rozklad_lrn}.

Lemma \ref{lem_fi_obraz_w_D_iff_mu_s_bullet_w_niedodatnia_i_gr_radialne} and the equality \eqref{eq_thpg_warunek_z_lematu_o_obrazie_oznacza} imply that for every $w\in W_D$ and $\nu$-a.e. $\lambda\in\TT$ there holds
\begin{equation}\label{eq_thpg_F_lambda_bullet_w}
\langle\varrho(\lambda),w\rangle\leq 0.
\end{equation}
This 'almost every' may a priori depend on $w$, but we can omit this problem in the following way.
Take a dense, countable subset $\lbrace w_j:j=1,2,\ldots\rbrace\subset W_D$ and for each $j$ let $A_j\subset\TT$ be a Borel set such that $\nu(\TT\setminus A_j)=0$ and $\langle\varrho(\lambda),w_j\rangle\leq 0$ for every $\lambda\in A_j$.
Put $A:=\cap_{j=1}^\infty A_j$.
It is clear that $\nu(\TT\setminus A)=0$ and \eqref{eq_thpg_F_lambda_bullet_w} holds every $w\in W_D$ and every $\lambda\in A$.
Thus,
\begin{equation*}
\varrho(\lambda)\in S_D\text{ for }\nu\text{-a.e. }\lambda\in\TT.
\end{equation*}
This is exactly the condition \eqref{th_postac_geod_rho_w_SD}.

It remains to prove the last part of the theorem, i.e. if $h\in\HH^n$, $h\not\equiv 0$ satisfy the conditions \eqref{th_postac_geod_g_in_pd_h} - \eqref{th_postac_geod_fi_0}, then it satisfy also \eqref{th_postac_geod_h_bullet_rho_rowne_0}, \eqref{th_postac_geod_nu_skupione_na} and \eqref{th_postac_geod_blhl_in_WD}.

From \eqref{th_postac_geod_g_in_pd_h} it follows that for $\LEBT$-a.e. $\lambda\in\TT$ there is $\BLHL{}\in W_D$.
Hence, \eqref{th_postac_geod_blhl_in_WD} is a consequence of continuity of $h$.

We prove \eqref{th_postac_geod_h_bullet_rho_rowne_0}.
Fix $\epsilon>0$.
There exists $\delta>0$ such that $|\bar\lambda h_j(\lambda)-\bar\zeta h_j(\zeta)|\leq\epsilon$ for $j=1,\ldots,n$, whenever $\lambda,\zeta\in\TT$ and $|\lambda-\zeta|\leq\delta$.
Take $\lambda_1,\ldots,\lambda_m\in\TT$ for which the arcs $L_k:=\lbrace \lambda\in\TT:|\lambda-\lambda_k|<\delta\rbrace$, $k=1,\ldots,m$, cover the circle $\TT$.
For $\nu$-a.e. $\lambda\in L_k$ we have $$\langle\BLHL{},\varrho(\lambda)\rangle\leq \langle\bar\lambda_k h(\lambda_k),\varrho(\lambda)\rangle+\|\bar\lambda h(\lambda)-\bar\lambda_k h(\lambda_k)\|\leq\epsilon\sqrt{n}.$$
The last inequality follows from \eqref{th_postac_geod_rho_w_SD} and \eqref{th_postac_geod_blhl_in_WD}.
As $k$ and $\epsilon$ are arbitrary, the condition \eqref{th_postac_geod_h_bullet_rho_rowne_0} follows.

Now we prove \eqref{th_postac_geod_nu_skupione_na}.
For every $\lambda\in\TT$ such that $\varrho(\lambda)\in S_D$ and $\BLHL{}\in\INT W_D$ there holds $\langle\BLHL{},\varrho(\lambda)\rangle<0$, because the map $w\mapsto\langle\varrho(\lambda),w\rangle$ is open and non-positive on $W_D$, so it must be negative on $\INT W_D$.
Hence, in view of \eqref{th_postac_geod_h_bullet_rho_rowne_0}, $\BLHL{}\in\INT W_D$ holds $\nu$-almost nowhere on $\TT$.
The proof is complete.
\end{proof}

\begin{RM}
It follows from the prooof, that if $\varphi$ is a complex geodesic for $D$ and $h$ is as in Theorem \ref{th_wkw_miara}, then all of the conditions from Theorem \ref{th_postac_geod} are satisfied with this $h$.
And vice versa, if the conditions \eqref{th_postac_geod_g_in_pd_h} - \eqref{th_postac_geod_fi_0} from Theorem \ref{th_postac_geod} hold for $h\in\HH^n$, $h\not\equiv 0$, then $h$ satisfy the condition from Theorem \ref{th_wkw_miara}.
\end{RM}

\begin{EX}\label{ex_obszar_polstozek}
In most of previously considered domains the singular part of boundary measure of a complex geodesic took a very special form (it was expressed by Dirac deltas), provided that the components of corresponding map $h$ were linearly independent.
In this example we will see that in some domains even for such $h$ the singular part may be almost arbitrary.
Consider tube domain in $\CC^2$ with the base being a 'half-cone', namely $$D:=\left\lbrace (x_1,x_2,x_3)\in\RR^3: x_2>0, x_3>\sqrt{x_1^2+x_2^2}\right\rbrace+i\RR^3.$$
One can check that $$W_D=\left\lbrace x_1\in\RR, x_2\geq 0, x_3\leq -\sqrt{x_1^2+x_2^2}\right\rbrace\cup\left\lbrace x_1\in\RR, x_2\leq 0,x_3\leq-|x_1|\right\rbrace$$ and $$S_D=\overline{\RE D}.$$

Let $h\in\HH^2$ be such that $$\BLHL{}=(\RE\lambda,\IM\lambda,-1),\;\lambda\in\TT,$$ i.e. $h(\lambda)=\frac12\left(\lambda^2+1,-i\lambda^2+i,-2\lambda\right)$.
Set $g:=0$ and $$\varrho(\lambda):=2^{-\frac 12}(\RE\lambda,\IM\lambda,1),\;\lambda\in\TT.$$
Note that for $\lambda\in\TT$ there holds $\BLHL{}\in\partial W_D$ iff $\IM\lambda\geq 0$.
%, so in opposite to previously considered domains, in this example the set $\lbrace\lambda\in\TT:\BLHL{}\in\partial W_D\rbrace$ is neither the whole $\TT$ nor a finite subset of $\TT$.
Let $\nu$ be an arbitrary finite positive Borel measure on $\TT$ singular to $\LEBT$ and such that $$\nu(\lbrace\lambda\in\TT:\IM\lambda<0\rbrace)=0.$$
Set $\mu:=g\,d\LEBT+\varrho\,d\nu=\varrho\,d\nu$ and let $\varphi$ be a holomorphic map given by the boundary measure $\mu$.
One can see that the conditions \eqref{th_postac_geod_g_in_pd_h}, \eqref{th_postac_geod_h_bullet_rho_wieksze} and \eqref{th_postac_geod_rho_w_SD} from Theorem \ref{th_postac_geod} are fulfilled.

Now if we choose $\nu$ such that $\frac{1}{2\pi}\mu(\TT)\in\RE D$, then in view of Theorem \ref{th_postac_geod} the map $\varphi$ is a complex geodesic for $D$.
To do so, we can e.g. take an arbitrary finite positive Borel measure $\omega$ singular to $\LEBT$ and supported on the set $\lbrace\lambda\in\TT:\IM\lambda\geq 0\rbrace$, and put $\nu:=\omega+\delta_1+\delta_i$.
\end{EX}

\begin{RM}\label{rm_sprowadzenie_do_wymiaru_1_2_3}
Let $D\subset\CC^n$ be a convex tube domain containing no complex affine lines.
% In this remark we shall see that the question on finding all complex geodesics for $D$ may be in some sense reduced to question on finding complex geodesics for some family of convex tube domins of dimension at most three.
% More precisely, a
Then a map $\varphi\in\OO(\DD,D)$ is a complex geodesic for $D$ iff there exists a number $m\in\lbrace 1,2,3\rbrace$ and a real $m\times n$ matrix $V$ with linearly independent rows such that the domain $D':=\lbrace V\cdot z:z\in D\rbrace\subset\CC^m$ is a convex tube containing no complex affine lines and $V\cdot\varphi$ is a complex geodesic for $D'$.
% This claim follows from \cite[Lemma 4.3]{zajac}: if $\varphi$ is a complex geodesic for $D$ and $h(\lambda)=\bar a\lambda^2+b\lambda+a$ ($a\in\CC^n$, $b\in\RR^n$) is as in Theorem \ref{th_postac_geod}, then $m$ may be chosen as the maximal number of linearly independent functions among $h_1,\ldots,h_n$ and $V$ may be chosen such that its rows form a basis of the space $X_h:=\SPAN_{\RR}\lbrace\RE a,\IM a,b\rbrace$.
This claim follows from \cite[Lemma 4.3]{zajac}: if $\varphi$ is a complex geodesic for $D$ and $h(\lambda)=\bar a\lambda^2+b\lambda+a$ ($a\in\CC^n$, $b\in\RR^n$) is as in Theorem \ref{th_postac_geod}, then $V$ may be chosen such that its rows form a basis of the space $X_h:=\SPAN_{\RR}\lbrace\RE a,\IM a,b\rbrace$.
Moreover, if in this situation we do an affine change of coordinates so that $X_h=\RR^m\times\lbrace 0\rbrace^{n-m}$, then we conclude that the map $(\varphi_1,\ldots,\varphi_m)$ has to be a complex geodesic for $D'$ and the other components $\varphi_{m+1},\ldots,\varphi_n$ can be arbitrary, privided that $\varphi(\DD)\subset D$.
\end{RM}

\section{Applications of Theorem \ref{th_postac_geod} in Reinhardt domains in $\CC^n$}\label{sect_reinhardt_domains}

In this section we apply results obtained in Subsection \ref{subsect_domains_Dn} to give formulas (more precisely, a necessary condition) for extremal mappings with respect to the Lempert function and the Kobayashi-Royden pseudometric in some classes of complete Reinhardt domains in $\CC^n$.
Recall that a non-empty open set $G\subset\CC^n$ is called a \emph{complete Reinhardt domain} if for every $(z_1,\ldots,z_n)\in G$ and $\lambda_1,\ldots,\lambda_n\in\CDD$ there holds $(\lambda_1 z_1,\ldots,\lambda_n z_n)\in G$.
To such a domain $G\subset\CC^n$ we associate its \emph{logarithmic image} $$\log G:=\lbrace (\log|z_1|,\ldots,\log |z_n|)\in\RR^n:(z_1,\ldots,z_n)\in G\cap(\CC_*)^n\rbrace$$ and the tube domain $$D_G:=\log G+i\RR^n$$ (we have $\RE D_G=\log G$).
The map $$\exp:D_G\ni (z_1,\ldots,z_n)\mapsto(e^{z_1},\ldots,e^{z_n})\in G\cap(\CC_*)^n$$ is then a holomorphic covering.
If the domain $G$ is bounded and pseudoconvex, then $D_G$ belongs to the family $\mathcal{D}_n$.
Using an argument from \cite{edigarianzwonek} we obtain a relation between extremal mappings in $G$ and complex geodesics in $D_G$.
It allows us to apply our description of complex geodesics for domain $D_G\in\mathcal{D}_n$ (Corollary \ref{crl_postac_geod_Dn}) to get formulas for extremal mappings in $G$.

Let $D\subset\CC^n$ be a domain.
The \emph{Lempert function} $\ell_D:D\times D\to [0,\infty)$ for $D$ is given by
\begin{eqnarray*}
\ell_D(z,w) \! &=& \! \inf\left\lbrace \rho(\sigma_1,\sigma_2):\sigma_1,\sigma_2\in\DD,\exists f\in\OO(\DD,D):f(\sigma_1)=z,f(\sigma_2)=w\right\rbrace\\
 \! &=& \! \inf\left\lbrace \rho(0,\sigma):\sigma\in\DD,\exists f\in\OO(\DD,D):f(0)=z,f(\sigma)=w\right\rbrace
\end{eqnarray*}
and the Kobayashi-Royden pseudometric $\kappa_D:D\times\CC^n\to[0,\infty)$ for $D$ is
\begin{eqnarray*}
\kappa_D(z,X) \!\!\! &=& \!\!\! \inf\left\lbrace \tfrac{|\alpha|}{1-|\sigma|^2}:\alpha\in\CC,\sigma\in\DD, \exists f\in\OO(\DD,D):f(\sigma)=z,\alpha f'(\sigma)=X\right\rbrace\\
 \!\!\! &=& \!\!\! \inf\left\lbrace \alpha>0:\exists f\in\OO(\DD,D):f(0)=z,\alpha f'(0)=X\right\rbrace.
\end{eqnarray*}
We say that a holomorphic map $f:\DD\to D$ is a $\ell_D$-\emph{extremal map} if $\rho(\sigma_1,\sigma_2)=\ell_D(f(\sigma_1),f(\sigma_2))$ for some $\sigma_1,\sigma_2\in\DD$ such that $\sigma_1\neq\sigma_2$.
We call $f$ a $\kappa_D$-\emph{extremal map} if $\kappa_D(f(\sigma),f'(\sigma))=\frac{1}{1-|\sigma|^2}$ for some $\sigma\in\DD$.
We often use the following basic fact: given $\sigma_1,\sigma_2\in\DD$, $\sigma_1\neq\sigma_2$ and $f\in\OO(\DD,D)$, the equality $\rho(\sigma_1,\sigma_2)=\ell_D(f(\sigma_1),f(\sigma_2))$ holds iff there is no map $g\in\OO(\DD,D)$ such that $g(\sigma_1)=f(\sigma_1)$, $g(\sigma_2)=f(\sigma_2)$ and $g(\DD)\subset\subset D$.
And analogously, given $\sigma\in\DD$ and $f\in\OO(\DD,D)$, the equality $\kappa_D(f(\sigma),f'(\sigma))=\frac{1}{1-|\sigma|^2}$ holds iff there is no map $g\in\OO(\DD,D)$ such that $g(\sigma)=f(\sigma)$, $g'(\sigma)=f'(\sigma)$ and $g(\DD)\subset\subset D$.
% If $D$ is a taut convex domain, then by the Lempert theorem all $\ell_D$-extremal and $\kappa_D$-extremal maps are complex geodesics.

For a convex tube domain $D\subset\CC^n$ containing no complex affine lines let $\mathcal{G}(D)$ denote the family of all Borel-measurable maps $g=(g_1,\ldots,g_n):\TT\to\RR^n$ such that $g_1,\ldots,g_n\in L^1(\TT,\LEBT)$ and with some $h\in\HH^n$ there holds $$g(\lambda)\in P_D(\BLHL{})\text{ for }\LEBT\text{-a.e. }\lambda\in\TT.$$
It follows from Theorem \ref{th_postac_geod} that if $g\in\mathcal{G}(D)$ and $\varphi\in\mathcal{M}^n$ is a map with boundary measure $g\,d\LEBT$, then either $\varphi$ is a complex geodesic for $D$ (if $\varphi(0)\in D$) or its image lies in $\partial D$ (in the opposite situation).
Note that if $\varphi(\DD)\subset\partial D$ and in addition the domain $\RE D$ is strictly convex in the geometric sense, then $\varphi$ is just a constant map.

In what follows, for a non-empty set $A=\lbrace j_1,\ldots,j_k\rbrace\subset\lbrace 1,\ldots,n\rbrace$ with $j_1<\ldots<j_k$, by $\pi_A$ we denote the projection $\CC^n\to\CC^k$ on the coordinates $j_1,\ldots,j_k$.

In the following two propositions we present formulas for $\ell_G$-extremal and $\kappa_G$-extremal maps in some classes of bounded, pseudoconvex, complete Reinhardt domains.

\begin{PROP}\label{prop_extremals_in_reinhardt_in_Cn_D_G_str_conv}
Let $G\subset\CC^n$, $n\geq 2$, be a bounded, pseudoconvex, complete Reinhardt domain such that the domain $\log G$ is strictly convex in the geometric sense and let $f=(f_1,\ldots,f_n)\in\OO(\DD,G)$ be a $\ell_G$-extremal or a $\kappa_G$-extremal map.
Set $$A:=\lbrace j\in\lbrace 1,\ldots,n\rbrace:f_j\not\equiv 0\rbrace$$ and let $k$ denote the number of elements of $A$.

Then $k>0$ and there exist some functions $B_1,\ldots,B_k\in\AUT(\DD)\cup\lbrace 1\rbrace$ and a map $g\in\mathcal{G}(D_{\pi_A(G)})$ such that
\begin{equation*}
\pi_A\circ f=(B_1 e^{\varphi_1},\ldots,B_k e^{\varphi_k}),
\end{equation*}
where $\varphi=(\varphi_1,\ldots,\varphi_k)\in\mathcal{M}^k$ is a map with boundary measure $g\,d\LEBT$.
\end{PROP}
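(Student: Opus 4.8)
The plan is to reduce the problem to a statement about complex geodesics in the tube domain $D_{\pi_A(G)}$ and then invoke Corollary \ref{crl_postac_geod_Dn}. First I would recall the standard passage from extremality in $G$ to extremality in the logarithmic tube, using the argument of Edigarian--Zwonek: if $f$ is $\ell_G$-extremal (resp. $\kappa_G$-extremal) and $A=\{j:f_j\not\equiv 0\}$, then after dividing each non-vanishing component $f_j$ by a Blaschke product collecting its zeros one obtains nowhere-vanishing holomorphic functions $B_j^{-1}f_j:\DD\to\CC_*$; here each $B_j$ is a finite Blaschke product, but I will need to argue that in fact $B_j\in\AUT(\DD)\cup\{1\}$, i.e.\ each non-vanishing component has at most one zero counted with multiplicity. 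This last point is where the strict convexity of $\log G$ enters and is, I expect, the main obstacle: one shows that if some $B_j$ had degree $\geq 2$, the map $\pi_A\circ f$ could be "pushed inside" $G$ (contradicting extremality), because the extremal disc for $G$ lifts, via $\exp$, to a complex geodesic $\varphi$ for $D_{\pi_A(G)}\in\mathcal{D}_k$, and by Remark \ref{rm_do_crl_postac_geod_Dn} strict convexity of $\log G$ forces all components $h_1,\dots,h_k$ of the associated map $h\in\HHP^k$ to be $\not\equiv 0$, hence (by Corollary \ref{crl_postac_geod_Dn}(v) and Remark \ref{rm_do_crl_postac_geod_Dn}) the singular part of the boundary measure is a $k$-tuple of single Dirac masses $(\alpha_1\delta_{\lambda_1},\dots,\alpha_k\delta_{\lambda_k})$ with $\alpha_j h_j(\lambda_j)=0$; translating $\varrho\,d\nu$ back through $\exp$ yields exactly the Blaschke factor $B_j$, and a Dirac mass $\alpha_j\delta_{\lambda_j}$ corresponds to a single zero, i.e.\ a single-factor Blaschke product, which is a disc automorphism (or, if $\alpha_j=0$, the constant $1$).

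In detail, the key steps in order are: (1) establish $k>0$, i.e.\ $f$ cannot be identically zero in all coordinates — immediate since $f$ is extremal (a constant map is never extremal in a bounded domain), so at least one $f_j\not\equiv 0$; (2) set $g_j:=B_j^{-1}f_j$ for $j\in A$ and check $\psi:=(g_j)_{j\in A}:\DD\to(\CC_*)^k$; invoking the Edigarian--Zwonek mechanism, $\psi$ is $\ell$- or $\kappa$-extremal for $\pi_A(G)$, hence $\log|\psi|=(\log|g_j|)_{j\in A}$ is the real part of a complex geodesic $\varphi=(\varphi_1,\dots,\varphi_k)$ for the tube domain $D_{\pi_A(G)}$, which lies in $\mathcal{D}_k$ because $\pi_A(G)$ is a bounded pseudoconvex complete Reinhardt domain; (3) apply Corollary \ref{crl_postac_geod_Dn} to $\varphi$: its boundary measure decomposes as $g\,d\LEBT+\varrho\,d\nu$ with $g\in\mathcal{G}(D_{\pi_A(G)})$ by definition of $\mathcal{G}$, and the singular part is governed by condition (v); (4) use strict convexity of $\log G$ (equivalently of $\log\pi_A(G)$, which follows from $\log G$ being strictly convex since $\log\pi_A(G)=\pi_A(\log G)$ and projections of strictly convex domains onto the non-degenerate coordinates stay strictly convex — here one uses $n\geq 2$ and that none of the retained coordinates is "trivial") together with Remark \ref{rm_do_crl_postac_geod_Dn} to conclude $h_j\not\equiv 0$ for all $j$, hence $\varrho\,d\nu=(\alpha_1\delta_{\lambda_1},\dots,\alpha_k\delta_{\lambda_k})$; (5) reconstruct $\varphi$ from its boundary measure via the Poisson--Herglotz formula, exponentiate, and identify the Dirac part with the Blaschke factor: $e^{\varphi_j}$ equals $e^{(\text{harmonic extension of }g_j\,d\LEBT)}$ times a factor $\bigl(\frac{\lambda-\lambda_j}{1-\bar\lambda_j\lambda}\bigr)^{m_j}$ with $m_j\in\{0,1\}$ (here $m_j$ is forced to be $0$ or $1$ because $\alpha_j\delta_{\lambda_j}$ is a single atom, whereas a Blaschke product of degree $\geq 2$ would contribute a sum of two or more atoms or an atom of "weight" inconsistent with $\partial\BB_k$-valued $\varrho$); thus $B_j\in\AUT(\DD)\cup\{1\}$ and $f_j=B_j e^{\varphi_j}$.

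The main obstacle I anticipate is step (4)–(5): carefully justifying that strict convexity of $\log G$ descends to $\log\pi_A(G)$ and that a single atom in the singular part of the boundary measure corresponds to exactly a degree-$\leq 1$ Blaschke factor. The first sub-point requires knowing that for $j\in A$ the $j$-th coordinate is "non-trivial" for $\pi_A(G)$ — i.e.\ that $\log\pi_A(G)$ is unbounded in the $-e_j$ direction but genuinely strictly convex elsewhere — which should follow from $G$ being a bounded complete Reinhardt domain (so $\log G+(-\infty,0]^n=\log G$, hence $D_G\in\mathcal{D}_n$) combined with strict convexity of $\log G$; one must check no segment appears in $\partial\log\pi_A(G)$, which could in principle be created by the projection, but boundedness of $G$ prevents this. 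The second sub-point is essentially bookkeeping with the explicit form of $\HHP^1$-functions recalled in Section \ref{sect_preliminaries} ($h_j(\lambda)=c_j(\lambda-d_j)(1-\bar d_j\lambda)$, so $\bar\lambda h_j(\lambda)=c_j|\lambda-d_j|^2$ has at most one zero on $\TT$), together with the observation from Example \ref{ex_rozklad_miary_dlebt_alfa_j_delta_j} relating Dirac atoms to the map $\varrho$; I would then simply quote these computations rather than redo them. Everything else is a direct application of Corollary \ref{crl_postac_geod_Dn}, Remark \ref{rm_do_crl_postac_geod_Dn}, and the standard Edigarian--Zwonek reduction, so those parts of the write-up should be short.
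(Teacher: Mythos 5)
Your reduction to $A=\{1,\dots,n\}$, the use of the Edigarian--Zwonek lifting, and the appeal to Corollary \ref{crl_postac_geod_Dn} all match the paper, but the core of your argument --- steps (4)--(5), where you try to control the Blaschke products $B_j$ through the singular part $\varrho\,d\nu$ of the boundary measure of $\varphi$ --- rests on a misidentification. In the factorization $f_j=B_je^{u_j+\psi_j}$ the complex geodesic is $\varphi_j=u_j+\psi_j$, i.e.\ the logarithm of the outer part times the singular inner part; the Blaschke product $B_j$ contributes \emph{nothing} to the boundary measure of $\varphi_j$, so no information about the zeros of $f_j$ inside $\DD$ can be extracted from $\varrho\,d\nu$. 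What the singular part does encode is the singular inner factor $e^{\psi_j}$: the Herglotz integral of an atom $\alpha_j\delta_{\lambda_j}$ with $\lambda_j\in\TT$ exponentiates to the singular inner function $\exp\bigl(\tfrac{\alpha_j}{2\pi}\tfrac{\lambda_j+\lambda}{\lambda_j-\lambda}\bigr)$, not to a Blaschke factor --- indeed, for $\lambda_j\in\TT$ the expression $\tfrac{\lambda-\lambda_j}{1-\bar\lambda_j\lambda}$ is just the unimodular constant $-\lambda_j$. So your step (5) is false as written, and your argument neither bounds the degree of $B_j$ nor eliminates the singular inner factor; note that the proposition asserts the boundary measure of $\varphi$ is $g\,d\LEBT$ with \emph{no} singular part, which your version would not deliver. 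A secondary gap: in step (2) the map $(B_j^{-1}f_j)_j$ need not even take values in $\pi_A(G)$ (dividing by $B_j$ increases modulus), so its extremality is unjustified.

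The paper handles this by a separate variational argument in which strict convexity enters quite differently from your Remark \ref{rm_do_crl_postac_geod_Dn} route. One first shows (by pushing a hypothetical better competitor for $\varphi$ back through $z\mapsto(B_1e^{z_1},\dots,B_ne^{z_n})$, which stays in $G$ because $|B_j|\le 1$ and $G$ is complete Reinhardt) that $\varphi$ is a complex geodesic for $D_G$ or maps into $\partial D_G$, hence $f^*(\lambda)\in\partial G$ a.e. Then, if some inner factor $B_1e^{\psi_1}\notin\AUT(\DD)\cup\{1\}$, one replaces it by $\xi\in\OO(\DD,\DD)$ with $\xi(\DD)\subset\subset\DD$ agreeing at $\sigma_1,\sigma_2$; the map $F=(\xi e^{u_1},f_2,\dots,f_n)$ is again $\ell_G$-extremal, so $F^*\in\partial G$ a.e., yet $0<|F_1^*|<|f_1^*|$ a.e., which (using $D_G\in\mathcal{D}_n$) forces a nontrivial segment parallel to $e_1$ in $\partial\log G$ --- contradicting strict convexity. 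This single argument kills both the excess Blaschke factors and the singular inner function at once, giving $\psi_j\equiv 0$, $B_j\in\AUT(\DD)\cup\{1\}$, and a purely absolutely continuous boundary measure. You would need to incorporate an argument of this kind for your proof to close.
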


\begin{PROP}\label{prop_extremals_in_reinhardt_in_C2}
Let $G\subset\CC^2$ be a bounded, pseudoconvex, complete Reinhardt domain and let $R_1,R_2>0$ be such that $\pi_{\lbrace 1\rbrace}(G)=R_1\cdot\DD$ and $\pi_{\lbrace 2\rbrace}(G)=R_2\cdot\DD$.
If $f\in\OO(\DD,G)$ is a $\ell_G$-extremal or a $\kappa_G$-extremal map, then there holds \textnormal{at least one} of the following conditions:
\begin{enumerate}[(i)]
\item\label{prop_extremals_in_reinhardt_in_C2_f_j_aut} there exists $j\in\lbrace 1,2\rbrace$ such that $\frac{1}{R_j}f_j\in\AUT(\DD)$, \textnormal{or}
\item\label{prop_extremals_in_reinhardt_in_C2__f_j_nie_aut} there exist some $B_1,B_2\in\AUT(\DD)\cup\lbrace 1\rbrace$ and $g\in\mathcal{G}(D_G)$ such that $f$ is of the form
\begin{equation*}
f=(B_1 e^{\varphi_1},B_2 e^{\varphi_2}),
\end{equation*}
where $\varphi=(\varphi_1,\varphi_2)\in\mathcal{M}^2$ is a map with boundary measure $g\,d\LEBT$.
\end{enumerate}
\end{PROP}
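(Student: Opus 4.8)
The plan is to deduce Proposition~\ref{prop_extremals_in_reinhardt_in_C2} from Corollary~\ref{crl_postac_geod_Dn} applied to the tube domain $D_G=\log G+i\RR^2\in\mathcal{D}_2$, in the spirit of Proposition~\ref{prop_extremals_in_reinhardt_in_Cn_D_G_str_conv} but replacing the use of strict convexity by a direct extremality argument. First I would dispose of the degenerate case: if $f_j\equiv0$ for some $j$, then $f$ is determined by the other component $f_{j'}:\DD\to\pi_{\lbrace j'\rbrace}(G)=R_{j'}\DD$, and since $R_{j'}\DD\times\lbrace0\rbrace\subset G$ (completeness of $G$), the inclusion $w\mapsto(w,0)$ and the projection $\pi_{\lbrace j'\rbrace}$ give $\ell_G(f(\sigma_1),f(\sigma_2))=\ell_{R_{j'}\DD}(f_{j'}(\sigma_1),f_{j'}(\sigma_2))$ and, likewise, $\kappa_G(f(\sigma),f'(\sigma))=\kappa_{R_{j'}\DD}(f_{j'}(\sigma),f'_{j'}(\sigma))$; extremality of $f$ then forces equality in the Schwarz--Pick inequality for $\tfrac1{R_{j'}}f_{j'}:\DD\to\DD$, hence $\tfrac1{R_{j'}}f_{j'}\in\AUT(\DD)$ and we are in case~\eqref{prop_extremals_in_reinhardt_in_C2_f_j_aut}. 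From now on assume $f_1\not\equiv0$ and $f_2\not\equiv0$.

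Next I would perform the Edigarian--Zwonek reduction \cite{edigarianzwonek}. Let $B_j$ be the Blaschke product formed from the zeros of $f_j$ and set $\Phi_j:=f_j/B_j$, a zero-free function in $H^\infty(\DD)$ with $\|\Phi_j\|_\infty=\|f_j\|_\infty$. Since $G$ is a complete Reinhardt domain, multiplying a coordinate of a map into $G$ by a function of modulus $\le1$ keeps the (relatively compact) image inside $G$; together with $B_j(\sigma)\Phi_j(\sigma)=f_j(\sigma)$ and $(B_j\Phi_j)'(\sigma)=f'_j(\sigma)$ this shows that any map beating $\Phi=(\Phi_1,\Phi_2)$ in the extremal problem at hand would, after reinserting the factors $B_j$, beat $f$; so $\Phi$ is again $\ell_G$- (resp.\ $\kappa_G$-) extremal. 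As $|\Phi_j^*|=|f_j^*|$ a.e.\ on $\TT$, the boundary values of $\Phi$ lie in $\overline G$ a.e.; writing $\log G=\bigcap_{v\in[0,\infty)^2}\lbrace x\in\RR^2:\langle x,v\rangle<\psi_G(v)\rbrace$ with $\psi_G(v):=\sup_{\log G}\langle\cdot,v\rangle<\infty$, the maximum principle for the bounded subharmonic functions $\lambda\mapsto\langle\log|\Phi(\lambda)|,v\rangle$ gives $\Phi(\DD)\subset\overline G$; moreover $\Phi$ can touch $\partial G$ only if one of these functions is constant, a rigidity which I expect to again force $\tfrac1{R_j}f_j\in\AUT(\DD)$ for some $j$, i.e.\ case~\eqref{prop_extremals_in_reinhardt_in_C2_f_j_aut}. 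In the remaining case $\Phi(\DD)\subset G\cap(\CC_*)^2$, so $\Phi$ lifts through the holomorphic covering $\exp:D_G\to G\cap(\CC_*)^2$ to a map $\varphi=(\varphi_1,\varphi_2):\DD\to D_G$ with $e^{\varphi_j}=\Phi_j$, and $f=(B_1e^{\varphi_1},B_2e^{\varphi_2})$.

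Then I would transfer extremality to $\varphi$: since $\exp$ is holomorphic (hence a contraction for $\ell$ and $\kappa$) while $\varphi$ itself provides the opposite bound, one obtains $\ell_{D_G}(\varphi(\sigma_1),\varphi(\sigma_2))=\rho(\sigma_1,\sigma_2)$ (resp.\ the analogous equality for $\kappa_{D_G}$), so $\varphi$ is $\ell_{D_G}$- (resp.\ $\kappa_{D_G}$-) extremal; as $D_G$ is a taut convex domain, $\varphi$ is a complex geodesic for $D_G$. Corollary~\ref{crl_postac_geod_Dn} now yields $h=(h_1,h_2)\in\HHP^2$, $h\not\equiv0$, such that, in the decomposition $g\,d\LEBT+\varrho\,d\nu$ of the boundary measure of $\varphi$, one has $g(\lambda)\in P_{D_G}(\bar\lambda h(\lambda))$ a.e.\ (so $g\in\mathcal{G}(D_G)$), $\varrho\in(-\infty,0]^2$ $\nu$-a.e., and $\varrho_j\,d\nu=\alpha_j\delta_{\lambda_j}$ with $\alpha_j\le0$, $\alpha_jh_j(\lambda_j)=0$ whenever $h_j\not\equiv0$. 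Combined with the canonical factorization $f_j=B_j\cdot(\textnormal{unimodular})\cdot O_j\cdot S_j$ into a Blaschke product, a constant, an outer and a singular-inner factor, this identifies $-\sigma_j$ — the singular part of the boundary measure of $\varphi_j$, which by construction is the defining measure of $S_j$ — as a single atom.

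What remains is to show that $\varrho\,d\nu=0$ (equivalently, each $f_j$ has trivial singular-inner factor) and that $\deg B_j\le1$, i.e.\ $B_j\in\AUT(\DD)\cup\lbrace1\rbrace$; \emph{this is the main obstacle}, and exactly the point at which strict convexity was used in Proposition~\ref{prop_extremals_in_reinhardt_in_Cn_D_G_str_conv}. I would argue by contradiction: if $S_j\not\equiv\textnormal{const}$ or $\deg B_j\ge2$, then the corresponding inner factor of $f_j$ makes $\tfrac1{R_j}f_j:\DD\to\DD$ — or a suitable lower-dimensional reduction of $f$ obtained as in Remark~\ref{rm_sprowadzenie_do_wymiaru_1_2_3} — strictly sub-extremal; unless another coordinate already witnesses the extremality of $f$ (which returns us to case~\eqref{prop_extremals_in_reinhardt_in_C2_f_j_aut}), one may then strictly shrink that factor while keeping the interpolation data $f(\sigma_i)$ (resp.\ the $1$-jet of $f$ at the relevant point) fixed, producing a relatively compact competitor and contradicting $\ell_G/\kappa_G$-extremality via the reinsertion trick of the second paragraph. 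Hence $B_1,B_2\in\AUT(\DD)\cup\lbrace1\rbrace$ and the boundary measure of $\varphi$ is $g\,d\LEBT$ with $g\in\mathcal{G}(D_G)$, which is case~\eqref{prop_extremals_in_reinhardt_in_C2__f_j_nie_aut}. The two-dimensional analysis of Subsection~\ref{subsect_domains_in_c2} should be what makes the ``sub-extremality forces case~\eqref{prop_extremals_in_reinhardt_in_C2_f_j_aut}'' dichotomy precise.
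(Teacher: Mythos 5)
Your overall skeleton (factor out the inner parts, lift the zero-free part through $\exp$ to a map $\varphi$ into $\overline{D_G}$, transfer extremality, and invoke Corollary \ref{crl_postac_geod_Dn}) is the same as the paper's, and the degenerate case $f_j\equiv 0$ and the ``reinsertion'' argument showing that $\varphi$ is either a complex geodesic for $D_G$ or maps into $\partial D_G$ are handled correctly. But two steps that you flag as expectations are exactly the places where the real work happens, and in one of them your expectation is wrong. First, the case $\varphi(\DD)\subset\partial D_G$ does \emph{not} force case \eqref{prop_extremals_in_reinhardt_in_C2_f_j_aut}: then $\varphi$ is constant on a face of $\partial D_G$ with $|f_j|\equiv e^{c_j}<R_j$ possible for both $j$ (think of $G_{a,p,q}$ from Example \ref{ex_reinhardt_klis_domain} with $\psi$ constant), so neither $\tfrac1{R_j}f_j$ need be an automorphism. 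The paper instead shows that the constant $g=\RE\varphi^*$ still lies in $\mathcal{G}(D_G)$ by taking a supporting functional $v$ at $\RE\varphi(0)$, using the maximum principle to get $\langle\RE\varphi-\RE\varphi(0),v\rangle\equiv 0$, and setting $h(\lambda)=\lambda v$; this lands the boundary case in \eqref{prop_extremals_in_reinhardt_in_C2__f_j_nie_aut}, not \eqref{prop_extremals_in_reinhardt_in_C2_f_j_aut}.

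Second, and more seriously, your argument for the dichotomy ``either $B_je^{\psi_j}\in\AUT(\DD)\cup\lbrace 1\rbrace$ for both $j$, or case \eqref{prop_extremals_in_reinhardt_in_C2_f_j_aut} holds'' does not go through as stated: replacing $B_1e^{\psi_1}$ by a function $\xi$ with $\xi(\DD)\subset\subset\DD$ interpolating the same data produces the competitor $F=(\xi e^{u_1},f_2)$, whose image is \emph{not} relatively compact in $G$ (the second coordinate is untouched), so you get no immediate contradiction with extremality. The paper's actual route is: $F$ is still $\ell_G$-extremal, hence $F^*\in\partial G$ $\LEBT$-a.e.; since $|F_1^*|<|f_1^*|$ and $D_G\in\mathcal{D}_2$, this forces $f_2^*\in R_2\cdot\TT$ a.e.; by the complete Reinhardt property and $n=2$ the bidisc $(R\cdot\DD)\times(R_2\cdot\DD)$ with $R=\sup|f_1|$ is contained in $G$, $F$ is extremal for this bidisc, and since $\tfrac1R F_1$ cannot be an automorphism one concludes $\tfrac1{R_2}f_2\in\AUT(\DD)$. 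None of this is in your sketch, and the pointer to Subsection \ref{subsect_domains_in_c2} is a red herring --- that subsection concerns tube domains with $\overline{W_D}$ a half-plane and plays no role here. As it stands the proposal has a genuine gap at the central step.
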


\begin{proof}[Proof of Proposition \ref{prop_extremals_in_reinhardt_in_Cn_D_G_str_conv}]
We present the proof only for the case when $f$ is a $\ell_G$-extremal map, because the proof for $\kappa_G$-extremal map is analogous.
Let $\sigma_1,\sigma_2\in\DD$ be such that $\rho(\sigma_1,\sigma_2)=\ell_G(f(\sigma_1),f(\sigma_2))$ and $\sigma_1\neq\sigma_2$.
It is clear that $k>0$.
The domain $\pi_A(G)$ satisfies the same assumptions as $G$, namely it is a bounded, pseudoconvex, complete Reinhardt domain with $\RE D_{\pi_A(G)}$ being strictly convex.
Moreover, if $z=(z_1,\ldots,z_n),w=(w_1,\ldots,w_n)\in G$ are such that $z_j=w_j=0$ for every $j\not\in A$, then $\ell_G(z,w)=\ell_{\pi_A(G)}(\pi_A(z),\pi_A(w))$.
In particular, $$\ell_{\pi_A(G)}(\pi_A(f(\sigma_1)),\pi_A(f(\sigma_2)))=\ell_G(f(\sigma_1),f(\sigma_2))=\rho(\sigma_1,\sigma_2),$$ what means that $\pi_A\circ f$ is a $\ell_{\pi_A(G)}$-extremal map.
Therefore, we need only to prove the conclusion for the domain $\pi_A(G)$ and the mapping $\pi_A\circ f$.
The latter map has no components equal identically to zero, so in fact it is enough to prove the proposition under the additional assumption that $f_1,\ldots,f_n\not\equiv 0$ and $A=\lbrace 1,\ldots,n\rbrace$.

Since $f_j$ is bounded and $f_j\not\equiv 0$ for every $j$, we may write (see \cite[p. 76]{koosis})
\begin{equation}\label{eq_percndgsc_f_j_eq_B_j_exp_u_j_psi_j}
f=(B_1 e^{u_1+\psi_1}, \ldots, B_n e^{u_n+\psi_n})
\end{equation}
for a (possibly infinite or identically equal to $1$) Blaschke product $B_j$, a function $u_j\in\mathcal{M}^1$ with boundary measure of the form $\log|f_j^*|\,d\LEBT$ (note that the function $\log|f_j^*|$ belongs to $L^1(\TT,\LEBT)$) and a function $\psi_j\in\mathcal{M}^1$ with $\IM\psi_j(0)=0$ and with boundary measure being finite, negative and singular to $\LEBT$.
Set $\varphi:=(u_1+\psi_1,\ldots,u_n+\psi_n)$.
For every $j$ we have $\RE\psi_j\leq 0$ on $\DD$ and, for $\LEBT$-a.e. $\lambda\in\TT$, $$\RE\psi_j^*(\lambda)=0,\;\RE\varphi_j^*(\lambda)=\RE u_j^*(\lambda),\;|B_j^*(\lambda)|=1,\;|f_j^*(\lambda)|=e^{\RE u_j^*(\lambda)}.$$
In particular, $\varphi(\DD)\subset\overline{D_G}$.

We claim that either the map $\varphi$ is a complex geodesic for $D_G$ or the image of $\varphi$ lies in $\partial D_G$.
The idea of this claim comes from \cite{edigarianzwonek}.
Assume that $\varphi(\DD)\not\subset\partial D_G$, then clearly $\varphi(\DD)\subset D_G$.
If $\varphi$ is not a complex geodesic for $D_G$, then there exists a map $\widetilde{\varphi}=(\widetilde{\varphi}_1,\ldots,\widetilde{\varphi}_n)\in\OO(\DD,D_G)$ such that $\widetilde{\varphi}(\sigma_1)=\varphi(\sigma_1)$, $\widetilde{\varphi}(\sigma_2)=\varphi(\sigma_2)$ and $\widetilde{\varphi}(\DD)\subset\subset D_G$.
Now the map $(B_1 e^{\widetilde{\varphi}_1},\ldots,B_n e^{\widetilde{\varphi}_n})$ maps $\sigma_1$, $\sigma_2$ to $f(\sigma_1)$, $f(\sigma_2)$ and its image is relatively compact in $G$.
It is a contradiction with the equality $\ell_G(f(\sigma_1),f(\sigma_2))=\rho(\sigma_1,\sigma_2)$.

From the above claim we conclude that for $\LEBT$-a.e. $\lambda\in\TT$ there holds $\varphi^*(\lambda)\in\partial D_G$ and hence $f^*(\lambda)\in\partial G$.
Note that from the above considerations it follows that the latter condition holds for every $\ell_G$-extremal map - we will use this fact several times.

We are going to show that
\begin{equation}\label{eq_percndgsc_B_j_e_psi_j_aut_or_1}
B_1 e^{\psi_1},\ldots,B_n e^{\psi_n}\in\AUT(\DD)\cup\lbrace 1\rbrace.
\end{equation}
For this, suppose to the contrary that $B_j e^{\psi_j}\not\in\AUT(\DD)\cup\lbrace 1\rbrace$ for some $j$.
We may assume that $j=1$.
Then $B_1 e^{\psi_1}\in\OO(\DD,\DD)\setminus\AUT(\DD)$, so there exists a function $\xi\in\OO(\DD,\DD)$ such that $\xi(\sigma_1)=B_1(\sigma_1)e^{\psi_1(\sigma_1)}$, $\xi(\sigma_2)=B_1(\sigma_2)e^{\psi_1(\sigma_2)}$ and $\xi(\DD)\subset\subset\DD$.
The map $$F:=(\xi e^{u_1}, f_2, \ldots, f_n)\in\OO(\DD,G)$$ maps $\sigma_1$, $\sigma_2$ to $f(\sigma_1)$, $f(\sigma_2)$, so it is also a $\ell_G$-extremal map.
In particular, $F^*(\lambda)\in\partial G$ for $\LEBT$-a.e. $\lambda\in\TT$.
Observe that $0<|\xi^*(\lambda)e^{u_1^*(\lambda)}|<|f_1^*(\lambda)|$.
Hence, from the fact that $D_G\in\mathcal{D}_n$ and $\LEBT$-almost every $f^*(\lambda)$ belongs to $\partial G$ we conclude that $\partial \RE D_G$ contains a non-trivial segment parallel to the vector $e_1$.
This contradicts strict convexity of $\RE D_G$.

From \eqref{eq_percndgsc_B_j_e_psi_j_aut_or_1} it follows that $B_j\in\AUT(\DD)\cup\lbrace 1\rbrace$ and $\RE\psi_j\equiv 0$ for every $j$.
As $\IM\psi_j(0)=0$, we get $\psi_j\equiv 0$.
Set $g_j:=\log|f_j^*|$ and $g:=(g_1,\ldots,g_n)$.
The boundary measure of $\varphi$ is equal to $g\,d\LEBT$, so to complete the proof we need only to show that $g\in\mathcal{G}(D_G)$.
If $\varphi(\DD)\subset D_G$, then $\varphi$ is a complex geodesic for $D_G\in\mathcal{D}_n$ and the conclusion follows directly from Corollary \ref{crl_postac_geod_Dn}.
In the opposite case, when $\varphi(\DD)\subset\partial D_G$, the map $\varphi$ is constant because of strict convexity of $\RE D_G$.
Thus, the map $g=\RE\varphi^*$ is also constant (up to a set of $\LEBT$ measure zero) and its image lies in $\partial\RE D_G$, so it belongs to $\mathcal{G}(D_G)$.
\end{proof}

\begin{proof}[Proof of Proposition \ref{prop_extremals_in_reinhardt_in_C2}]
We again consider only the case when $f=(f_1,f_2)$ is a $\ell_G$-extremal map.
Take $\sigma_1,\sigma_2\in\DD$ such that $\rho(\sigma_1,\sigma_2)=\ell_G(f(\sigma_1),f(\sigma_2))$ and $\sigma_1\neq\sigma_2$.
If $f_1\equiv 0$ or $f_2\equiv 0$, then similarly as in the previous proof we can show that $f_2=\pi_{\lbrace 2\rbrace}\circ f$ is a $\ell_{R_2\cdot\DD}$-extremal map or $f_1=\pi_{\lbrace 1\rbrace}\circ f$ is a $\ell_{R_1\cdot\DD}$-extremal map.
Then the condition \eqref{prop_extremals_in_reinhardt_in_C2_f_j_aut} is satisfied.
Thus, it remains to consider the situation when $f_1,f_2\not\equiv 0$.
In that case there hold \eqref{eq_percndgsc_f_j_eq_B_j_exp_u_j_psi_j} with $B_1$, $B_2$, $u_1$, $u_2$, $\psi_1$, $\psi_2$ and $\varphi$ as in the previous proof.
Like there, either $\varphi(\DD)\subset\partial D_G$ or $\varphi$ is a complex geodesic for $D_G$, what allows us to conclude that $\varphi^*(\lambda)\in\partial D_G$ and $f^*(\lambda)\in\partial G$ for $\LEBT$-a.e. $\lambda\in\TT$.

We claim that there hold any of the condition \eqref{prop_extremals_in_reinhardt_in_C2_f_j_aut} from Proposition \ref{prop_extremals_in_reinhardt_in_C2} or the condition \eqref{eq_percndgsc_B_j_e_psi_j_aut_or_1} with $n=2$, i.e.
\begin{equation}\label{eq_perc2_B_j_e_psi_j_aut_or_1}
B_1 e^{\psi_1},B_2 e^{\psi_2}\in\AUT(\DD)\cup\lbrace 1\rbrace
\end{equation}
(cf. \cite[Lemat 4.3.3]{klisdoktorat}).
Suppose that $B_1 e^{\psi_1}\not\in\AUT(\DD)\cup\lbrace 1\rbrace$.
There exists a function $\xi\in\OO(\DD,\DD)$ such that $\xi(\sigma_1)=B_1(\sigma_1)e^{\psi_1(\sigma_1)}$, $\xi(\sigma_2)=B_1(\sigma_2)e^{\psi_1(\sigma_2)}$ and $\xi(\DD)\subset\subset\DD$.
Consider the map $$F:=(F_1,f_2):=(\xi e^{u_1},f_2).$$
Like previously, $F$ is a $\ell_G$-extremal map and $F^*(\lambda)\in\partial G$ for $\LEBT$-a.e. $\lambda\in\TT$.
Since for $\LEBT$-a.e. $\lambda\in\TT$ we have $f^*(\lambda)\in\partial G$ and $|F_1^*(\lambda)|<|f_1^*(\lambda)|$, the fact that $D_G\in\mathcal{D}_2$ imply that $$f_2^*(\lambda)\in\partial\pi_{\lbrace 2\rbrace}(G)=R_2\cdot\TT.$$
Put $R:=\sup_{\lambda\in\DD}|f_1(\lambda)|$.
As $G$ is a complete Reinhardt domain, for $\LEBT$-a.e. $\lambda\in\TT$ the bidisc $(|f_1^*(\lambda)|\cdot\DD)\times(|f_2^*(\lambda)|\cdot\DD)$ lies in $G$.
Therefore, the bidisc $(R\cdot\DD)\times(R_2\cdot\DD)$ also lies in $G$ (here we rely on the assumption that $n=2$).
We have $F(\DD)\subset (R\cdot\DD)\times(R_2\cdot\DD)$, so $F$ is a $\ell_{(R\cdot\DD)\times(R_2\cdot\DD)}$-extremal map.
Hence either $\frac{1}{R_2}f_2\in\AUT(\DD)$ or $\frac{1}{R}F_1\in\AUT(\DD)$.
But the image of the latter function is relatively compact in $\DD$, so there must hold $\frac{1}{R_2}f_2\in\AUT(\DD)$.
Applying the same reasoning we can show that if $B_2 e^{\psi_2}\not\in\AUT(\DD)\cup\lbrace 1\rbrace$, then $\frac{1}{R_1}f_1\in\AUT(\DD)$.
It means that at least of the conditions \eqref{eq_perc2_B_j_e_psi_j_aut_or_1} or \eqref{prop_extremals_in_reinhardt_in_C2_f_j_aut} holds.

To complete the proof it suffices to prove that the condition \eqref{prop_extremals_in_reinhardt_in_C2__f_j_nie_aut} follows from \eqref{eq_perc2_B_j_e_psi_j_aut_or_1}.
For this, assume that \eqref{eq_perc2_B_j_e_psi_j_aut_or_1} holds.
As before, we need only to show that the map $g:=(\log|f_1^*|,\log|f_2^*|)=\RE\varphi^*$ belongs to $\mathcal{G}(D_G)$.
If $\varphi(\DD)\subset G_D$, then the conclusion follows from Corollary \ref{crl_postac_geod_Dn}.
In the opposite case, when $\varphi(\DD)\subset\partial D_G$, take a vector $v\in\RR^n$ such that $\langle x-\RE\varphi(0),v\rangle<0$ for every $x\in\RE D_G$.
From the maximum principle for harmonic functions it follows that $\langle\RE\varphi-\RE\varphi(0),v\rangle\equiv 0$.
Defining $h(\lambda):=\lambda\cdot v$ we get $g(\lambda)=\RE\varphi^*(\lambda)\in P_D(\BLHL{})$ for $\LEBT$-a.e. $\lambda\in\TT$.
The proof is complete.
\end{proof}

\begin{EX}\label{ex_reinhardt_klis_domain}
(cf. \cite[Theorem 4.1.4]{klisdoktorat})
Given numbers $p,q\in(0,\infty)$ and $a\in (0,1)$, consider the domain $$G:=G_{a,p,q}:=\lbrace (z_1,z_2)\in\CC^2: |z_1|,|z_2|<1, |z_1|^p|z_2|^q<a\rbrace.$$
It is a bounded, pseudoconvex, complete Reinhardt domain in $\CC^2$ with $$D_{G}=\lbrace (x_1,x_2)\in \RR^2: x_1,x_2<0,\;p x_1+q x_2<\log a\rbrace+i\RR^2.$$

Take $g=(g_1,g_2)\in\mathcal{G}(D_G)$.
Observe that up to a set of $\LEBT$ measure zero there hold any of $g_1\equiv 0$, $g_2\equiv 0$ or $pg_1+qg_2\equiv\log a$.
Indeed, take $h=(h_1,h_2)$ as in the definition of the family $\mathcal{G}(D_G)$.
If $h_1\equiv 0$ or $h_2\equiv 0$, then $g_2\equiv 0$ or $g_1\equiv 0$, respectively.
In the opposite case, for $\LEBT$-a.e. $\lambda\in\TT$ we have $$g(\lambda)\in P_{D_G}(\BLHL{})=\left\lbrace\left(tp^{-1}\log a,(1-t)q^{-1}\log a\right):t\in[0,1]\right\rbrace,$$ so $\LEBT$-almost everywhere on $\TT$ there holds $pg_1+qg_2=\log a$.
From this observation it follows that if $\varphi=(\varphi_1,\varphi_2)$ is a holomorphic map with boundary measure $g\,d\LEBT$, then there holds any of $\RE\varphi_1\equiv 0$, $\RE\varphi_2\equiv 0$ or $p\,\RE\varphi_1+q\,\RE\varphi_2\equiv\log a$.
In the case when $\RE\varphi_1\not\equiv 0$ and $\RE\varphi_2\not\equiv 0$ we therefore get $$\varphi(\lambda)=\left(\psi(\lambda) p^{-1}\log a,(1-\psi(\lambda))q^{-1}\log a+i\beta\right),\;\lambda\in\DD$$ for a number $\beta\in\RR$ and a holomorphic map $\psi:\DD\to\SS$, where $$\SS:=\lbrace\zeta\in\CC:0<\RE\zeta<1\rbrace.$$

From these considerations and from Proposition \ref{prop_extremals_in_reinhardt_in_C2} we conclude that if $f=(f_1,f_2)\in\OO(\DD,G_{a,p,q})$ is a $\ell_{G_{a,p,q}}$-extremal or a $\kappa_{G_{a,p,q}}$-extremal map, then one of following conditions hold:
\begin{enumerate}[(i)]
\item $f_1\in\AUT(\DD)$, \textit{or}
\item $f_2\in\AUT(\DD)$, \textit{or}
\item $f$ is of the form $$f=\left(B_1 \exp\left(\psi p^{-1}\log a\right), B_2 \exp\left((1-\psi)q^{-1}\log a+i\beta\right)\right)$$ for some $\psi\in\OO(\DD,\SS)$, $\beta\in\RR$ and $B_1,B_2\in\AUT(\DD)\cup\lbrace 1\rbrace$ with $B_1 B_2\not\equiv 1$.
\end{enumerate}
\end{EX}

% **** **** **** **** **** **** **** **** **** **** **** **** **** **** **** ****

\bigskip
\textbf{Acknowledgements.} I would like to thank \L{}ukasz Kosi\'{n}ski for bringing my attention to some important papers and for many comments that improved the final shape of the paper.


\begin{thebibliography}{99999999}

% \bibitem{abate} M.~Abate, \textit{Iteration theory of holomorphic maps on taut manifolds}, Mediterranean Press, Rende, 1989.

\bibitem[Bra-Sor]{braccisarcco} F.~Bracci, A.~Saracco, \textit{Hyperbolicity in unbounded convex domains}, Forum Math. 21 (2009), no. 5, 815-825.

% \bibitem{edigarianzwonek} A.~Edigarian, W.~Zwonek, \textit{Schwarz lemma for the tetrablock}, Bull. Lond. Math. Soc. 41 (2009), no. 3, 506-514.

\bibitem[Edi-Zwo]{edigarianzwonek} A.~Edigarian, W.~Zwonek, \textit{Schwarz lemma for the tetrablock}, Bull. Lond. Math. Soc. 41 (2009), no. 3, 506-514.


% \bibitem{gentili} G.~Gentili, \textit{Regular complex geodesic for the domain $D_n=\lbrace (z_1,\ldots,z_n)\in\CC^n: |z_1|+\ldots+|z_n|<1\rbrace$}, 35-45, Lecture Notes in Math., 1277, Springer, Berlin, 1987.

\bibitem[Jar-Pfl]{jarnickipflug} M.~Jarnicki, P.~Pflug, \textit{Invariant distances and metrics in complex analysis}, Walter de Gruyter $\&$ Co., Berlin, 1993.

% \bibitem{klis_doktorat} P.~Kli\'{s}, \textit{Ph. D. thesis}, 2013.

\bibitem[Kli]{klisdoktorat} P. Kli\'{s}, \textit{Odwzorowania ekstremalne} (PhD dissertation), 2012.

\bibitem[Koo]{koosis} P.~Koosis, \textit{Introduction to $H^p$ spaces} (2-nd edition), Cambridge University Press, Cambridge, 1998.

% \bibitem{krantzroydenwong} S.~G.~Krantz, H.~L.~Royden, P.-M.~Wong, \textit{The Carath\'{e}odory and Kobayashi/Royden metrics by way of dual extremal problems}, Complex Variables Elliptic Eqns 58 (2013), 1283-1298.

\bibitem[Lem]{lempert} L.~Lempert, \textit{La m\'etrique de Kobayashi et la repr\'esentation des domains sur la boule}, Bull. Soc. Math. France 109 (1981), 427-474.

% \bibitem{roydenwong} H.~L. Royden and P.-M.~Wong, \textit{Carath\'eodory and Kobayashi metric on convex domains}, preprint (1983).

\bibitem[Zaj]{zajac} S.~Zaj\k{a}c, \textit{Complex geodesics in convex tube domains}, to appear in Ann. Scuola Norm. Sup. Pisa Cl. Sci.

\end{thebibliography}
\end{document}